\renewenvironment{thebibliography}[1]{%
	\section*{References}%
	\@mkboth{REFERENCES}{REFERENCES}%
	\normalfont\bibliofont          
	\list{\@biblabel{\@arabic\c@enumiv}}%
	{\settowidth\labelwidth{\@biblabel{#1}}%
		\leftmargin\labelwidth
		\advance\leftmargin\labelsep
		\itemindent\z@
		\listparindent\z@
		\usecounter{enumiv}%
		\let\p@enumiv\@empty
		\renewcommand\theenumiv{\@arabic\c@enumiv}}%
	\sloppy
	\clubpenalty4000
	\@clubpenalty\clubpenalty
	\widowpenalty4000%
	\sfcode`\.\@m}
{\endlist}
\def\D{\mathcal{D}}
\def\B{\mathcal{B}}
\def\P{\mathcal{P}}
\def\Aut{{\rm Aut}}
\def\Sym{{\rm Sym}}
\def\ZZ{{\mathbb Z}}
\def\DD{{\rm D}}
\def\SS{{\rm S}}
\def\AA{{\rm A}}
\def\PSL{{\rm PSL}}
\def\PGL{{\rm PGL}}
\def\PGU{{\rm PGU}}
\def\PGammaL{{\rm P\Gamma L}}
\def\PG{{\rm PG}}
\def\PGammaL{{\rm P\Gamma L}}
\def\AGammaL{{\rm A\Gamma L}}
\def\AGL{{\rm AGL}}
\def\GU{{\rm GU}}
\def\GF{\mathbb{F}}
\def\GL{{\rm GL}}
\def\GammaL{{\rm \Gamma L}}
\def\Sz{{\rm Sz}}
\def\Out{{\rm Out}}
\newtheorem{theorem}{Theorem}[section]
\newtheorem{proposition}[theorem]{Proposition}
\newtheorem{lemma}[theorem]{Lemma}
\newtheorem{corollary}[theorem]{Corollary}
\newtheorem{remark}{Remark}
\newtheorem{hypothesis}[theorem]{Hypothesis}
\date{}
\begin{document}
\title[]{Locally dihedral block designs and primitive groups with dihedral point stabilizers}

\author{Jianfu Chen}
	\address{School of Mathematics and Computational Science\\
		Wuyi University,  Jiangmen 529030,  P.R.China}
	\email{chenjf@wyu.edu.cn; jmchenjianfu@126.com}

\author{Yanni Wu}
	\address{Department of Mathematics \\
		Southern University of Science and Technology,  Shenzhen 518055,  P.R.China}
	\email{12031209@mail.sustech.edu.cn}

\author{Binzhou Xia}
	\address{School of Mathematics and Statistics \\
		The University of Melbourne,  Parkville, VIC 3010, Australia}
	\email{binzhoux@unimelb.edu.au}

\begin{abstract}
Let $\D$ be a block design admitting a locally transitive automorphism group $G$.
We say that $\D$ is $G$-point-locally dihedral if the induced local action $G_x^{\D(x)}$ is dihedral for each point $x$, and that $\D$ is $G$-block-locally dihedral if the induced local action $G_B^B$ is dihedral for each block $B$. If both conditions hold, $\D$ is called $G$-locally dihedral.
We give a classification of primitive permutation groups with dihedral point stabilizers and apply this to classify point-locally dihedral block designs.
In particular, for symmetric designs with a dihedral or abelian local action, we show that $G_x$ and $G_B$ are conjugate in $G$, and that either $G$ acts imprimitively on both points and blocks, or $G$ is a Frobenius group of odd order.

\vskip0.1in

\noindent\emph{Mathematics Subject Classification (2020):} 20B15, 20B25, 20E28, 05E18

\vskip0.1in

\noindent\emph{Keywords:} 2-design, automorphism group, flag-transitive, primitive group, dihedral group
\end{abstract}

\maketitle


\section{Introduction}

A block design, or a $2$-$(v,k,\lambda)$ design, is an incidence structure $\mathcal{D}=(\mathcal{P}, \mathcal{B})$, where $\mathcal{P}$ is a set of $v$ points, $\mathcal{B}$ is a set of $b$ $k$-subsets of $\mathcal{P}$, such that any 2-subset of $\P$ is contained in exactly $\lambda$ blocks.
The cardinality of the set $\D(x)$ of blocks through a point $x\in\P$ is a constant denoted by $r$.
A symmetric design is a 2-design that satisfies $v=b$.
For a symmetric design, it is known that any two blocks are incident with exactly $\lambda$ points.
In this paper, all 2-designs are considered non-trivial, meaning they satisfy $2<k<v-1$.
An automorphism group $G$ of a 2-design $\mathcal{D}$ is a permutation group of the point set $\mathcal{P}$ that preserves the block set $\mathcal{B}$.
The set of all automorphisms of $\D$ forms a group $\Aut(\D)$, called the full automorphism group of $\D$.
If $G$ acts transitively on the set of incident point-block pairs of $\D$, then $G$ is called $G$-flag-transitive.
For more basic facts on $2$-designs, see \cite[Section 2.1]{Dembowski}, \cite[Chapter II]{Handbook}, \cite[Chapter 3]{Biggs}, and \cite{BethDesign} for examples.

Classifying block designs admitting an automorphism group acting transitively on both points and blocks is a long-term research project.
For instance, the classification of flag-transitive finite linear spaces, namely, $2$-$(v,k,1)$ designs, was accomplished by Kantor \cite{KantorProplane} and a team of six authors~\cite{Buekenhout2vk1}.
Symmetric designs admitting certain automorphism groups were studied in \cite{Kantor2tr} and \cite{DempwolffSymaffine3, DempwolffSymAlmost3}, respectively for 2-transitive groups and primitive groups of rank three.
In recent years, there has been a particular interest in studying flag-transitive designs with certain parameters.

We inspect block designs that admit an automorphism group satisfying certain local properties.
Specifically, we study automorphism groups by restricting the point stabilizers (or block stabilizers), or their actions on the set of adjacent blocks (or adjacent points) to be some certain types of groups.
A natural approach is to consider locally transitive automorphism groups, namely, those $G$ with both $G_x^{\D(x)}$ and $G_B^B$ transitive, where $x\in\P$ and $B\in \B$.
It is straightforward to verify that the local transitivity is equivalent to the flag-transitivity (see \cite[Lemma 5.1]{LocallyPrimitive}).
Further, an automorphism group is called locally primitive if both the local actions $G_x^{\D(x)}$ and $G_B^B$ are primitive.
In 2025, Chen, Hua, Li and Wu \cite{LocallyPrimitive} studied locally primitive block designs and reduced such groups to affine and almost simple types.
Very recently, they further classified locally 2-homogenous block designs \cite{Locally2homo}.

The present paper studies the locally transitive automorphism groups of block designs whose local action $G_x^{\D(x)}$ or $G_B^B$ is dihedral.
In addition, we obtain some results concerning symmetric designs with an abelian local action.
For a block design $\D$ admitting a locally transitive automorphism group $G$, $G$ is called point-locally dihedral if the local action $G_x^{\D(x)}$ is dihedral; $G$ is called block-locally dihedral if the local action $G_B^B$ is dihedral. If $G$ is both point-locally dihedral and block-locally dihedral, then $G$ is said to be locally dihedral. The definition of locally abelian is analogous.

As preliminary work on studying block designs with a dihedral local action, Section \ref{SecPriDihedral} provides an explicit classification of almost simple primitive groups with dihedral point stabilizers.
This extends earlier results of Li \cite{LicaihengD2n}, Quirin \cite{QuirinSmallOrbitals}, and Wang \cite{WangJiesuborbitun2p}.
They previously studied the primitive groups with dihedral point stabilizers but did not obtain a complete classification.
Note that, in 2011, Li and Zhang \cite{ZhanghuaLicaiheng} studied the primitive permutation groups with solvable point stabilizers, which is a more general case. However, the explicit list of groups with dihedral point stabilizers cannot be directly read off from their classification.
In Theorem \ref{ProLCH}, we obtain an explicit and complete classification by further analyzing the actions of the outer automorphism groups of certain simple groups.
In addition, we prove that in $\PGL_2(q)$, the intersection of two maximal subgroups respectively of type $\DD_{2(q-1)}$ and $\DD_{2(q+1)}$ is always non-trivial, which plays an important role in the classification of block designs.

In Section \ref{SectionLocalAction}, we obtain some general results on the automorphism groups of symmetric designs satisfying certain local properties.
In Sections \ref{SectionAbelianFrobenius} and \ref{SecProof1}, we study locally transitive symmetric designs with a local abelian or local dihedral action, and find that such an automorphism group is either imprimitive on both points and blocks, or a Frobenius group of odd order.
In Section \ref{pri2designs}, we classify non-symmetric designs admitting a locally transitive and point-primitive automorphism group with dihedral point stabilizers, which includes all point-primitive, point-locally dihedral designs for which the point-local action is faithful.

Throughout the paper, we adopt the following notation:
 $B$ denotes a block of a design $\D$, and $x$ denotes a point of $\D$.
We denote by $\DD_{2n}$ the dihedral group of order $2n$ ($n\geq3$), by $N$ the cyclic normal subgroup of order $n$ in $\DD_{2n}$, and by $F$ a complement of $N$ in $\DD_{2n}$.
Our first main result concerns locally dihedral and locally abelian symmetric designs, stated as Theorem \ref{ThmSym} below.

\begin{theorem}\label{ThmSym}
Let $\D=(\mathcal{P}, \mathcal{B})$ be a symmetric design with $\lambda>1$ and $G$ be a locally transitive automorphism group of $\D$.
Let $x\in\P$ and $B\in\B$.
If $K$ is an abelian group or a dihedral group, then the following are equivalent:
\begin{itemize}
\item[{\rm(a)}]  $G_x\cong K$.
\item[{\rm(b)}] $G_B\cong K$.
\item[{\rm(c)}] $G_x^{\D(x)}\cong K$.
\item[{\rm(d)}] $G_B^B\cong K$.
\item[{\rm(e)}] The block set $\B$ is generated by a faithful suborbit of $G$ on the point set $\P$, namely, $\B=\{\Delta^g\,|\,g\in G\}$ with $\Delta$ a faithful orbit of $G_x$ on $\P$, such that $G_x\cong G_x^{\Delta}\cong K$. 
\end{itemize}
Assume that one of {\rm (a)--(e)} holds.
Then $G_x$ and $G_B$ are conjugate in $G$. Moreover, either
\begin{itemize}
\item[\rm(1)] $G$ is imprimitive on both $\P$ and $\B$, or

\item[\rm(2)] $K$ is cyclic, and $G$ is a Frobenius group of odd order with $G\leq\AGammaL_1(v)$.
\end{itemize}
\end{theorem}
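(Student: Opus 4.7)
The plan is to establish the five equivalences (a)--(e) first, and then derive the conjugacy of $G_x$ and $G_B$ together with the (im)primitivity dichotomy. Since the dual of a symmetric $2$-$(v,k,\lambda)$ design is again such a design with the same automorphism group $G$ swapping the roles of $x$ and $B$, duality immediately gives (a)$\Leftrightarrow$(b) and (c)$\Leftrightarrow$(d); it therefore suffices to prove the ``point-side'' equivalences (a)$\Leftrightarrow$(c)$\Leftrightarrow$(e).

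For (a)$\Leftrightarrow$(c), the key object is the kernel of the action $G_x \to G_x^{\D(x)}$, which is the core of the flag stabilizer $G_{x,B}$ in $G_x$. When $G_x \cong K$ is abelian, every subgroup of $G_x$ is normal, so this core equals $G_{x,B}$ itself; the orbit-stabilizer equation $|G_x|=k\,|G_{x,B}|$ then forces $|K|=k$ exactly when $G_{x,B}=1$, and the equivalence reduces to showing that $G_{x,B}$ is trivial, which is carried out in Section~\ref{SectionAbelianFrobenius} using the symmetric design equation $\lambda(v-1)=k(k-1)$ together with the regularity of the transitive abelian action $G_x^{\D(x)}$. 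For dihedral $K\cong D_{2n}$, the core of $G_{x,B}$ in $G_x$ is confined to subgroups of the unique cyclic subgroup of index two, and an analogous combinatorial constraint based on $\lambda>1$, developed in Section~\ref{SecProof1}, rules out nontrivial cores. The equivalence with (e) comes from taking $\Delta$ to be a $G_x$-orbit on $\P$ that turns out to be a block: once the local action is faithful, such a $\Delta$ is fixed setwise by $G_x$, and $\{\Delta^g\mid g\in G\}$ recovers $\B$ by block-transitivity.

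For the conjugacy of $G_x$ and $G_B$ under condition (e), note that $\Delta$ is a block with $G_x\leq G_\Delta$, and because $|G_x|=|G_\Delta|=|G|/v$ in a flag-transitive symmetric design, we actually have $G_x=G_\Delta$; since $G$ is transitive on $\B$, $G_\Delta$ is conjugate in $G$ to every $G_B$, hence so is $G_x$. The detailed setup of this argument, including its validity for both the abelian and dihedral cases, should be extracted from Section~\ref{SectionLocalAction}.

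The deepest and most laborious part is the (im)primitivity dichotomy. Assume $G$ is primitive on $\P$ and $K$ is dihedral. Then $G$ is a primitive permutation group with dihedral point stabilizer, and Theorem~\ref{ProLCH} provides an explicit list of such pairs $(G,G_x)$. The main obstacle is a careful case-by-case examination of this list: for each candidate, one must combine the known suborbit structure of $G$ on $\P$ with the numerical constraints of a symmetric $2$-design with $\lambda>1$ (in particular $\lambda(v-1)=k(k-1)$, where $k$ must be a valid suborbit length of $G_x$) and show that no admissible parameter set arises. If $K$ is abelian and $G$ is primitive, then either $G$ is $2$-transitive, in which case $G_x$ is a regular abelian complement, hence cyclic of order $v-1$, placing $G$ inside $\AGammaL_1(v)$ as a Frobenius group whose odd order is forced by the design arithmetic (giving case~(2)); or $G$ has rank at least $3$ and the abelian point stabilizer together with the equal-parameter structure of a symmetric design yields a $G$-invariant partition of $\P$, contradicting primitivity. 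The corresponding imprimitivity statement on $\B$ then follows by applying the same argument to the dual design.
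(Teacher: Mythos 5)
There are several genuine gaps. First, your claim that duality ``immediately gives (a)$\Leftrightarrow$(b)'' is not correct: duality only translates a statement about $\D$ into the corresponding statement about the dual design $\D^*$ (e.g.\ it turns a proof of (b)$\Leftrightarrow$(d) into a proof of (a)$\Leftrightarrow$(c)); it does not identify the point stabilizers with the block stabilizers of the \emph{same} design. The bridge between the two sides --- the fact that every block stabilizer $G_B$ actually \emph{equals} some point stabilizer $G_x$ with $x\in\P\setminus B$ --- is one of the substantive conclusions of the theorem, and in the paper it is proved by a fixed-point analysis: the cyclic normal subgroup $N\unlhd G_B$ fixes equally many points and blocks, one shows $|{\rm Fix}_\P(N)|\in\{1,3\}$ using the axis/centre machinery and the semiregularity of $G_B^B$, and then deduces $G_B\le G_x$. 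Your sketch of (a)$\Leftrightarrow$(e) simply asserts that a $G_x$-orbit ``turns out to be a block,'' which is exactly this missing argument; without it the loop through (b) and (d) never closes.

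Second, your imprimitivity argument for dihedral $K$ examines only ``the list'' from Theorem~\ref{ProLCH}, but that table covers only the almost simple socles; the theorem also allows ${\rm soc}(G)$ elementary abelian, and the affine case $G=Z_p^d{:}D_{2n}$ is where most of the work lies in the paper (Mann's theorem on elementary abelian difference sets for $p=2$, a suborbit count splitting $\P\setminus\{x\}$ into orbits of length $n$ and $2n$, and parity arguments forcing $\lambda$ odd against Lemma~\ref{LemmaLambdaEven} for odd $p$). Omitting this case leaves the dichotomy unproved. Third, your abelian analysis is wrong as stated: you claim that rank at least $3$ together with an abelian stabilizer forces imprimitivity, but the Paley designs give primitive, rank-$3$, locally cyclic symmetric designs that land in conclusion~(2) of the theorem; the correct route is that a primitive group with nontrivial abelian point stabilizer is a Frobenius group with cyclic stabilizer, and then the odd order and containment in $\AGammaL_1(v)$ follow from the difference-set and involution-fixed-point arguments, with no appeal to $2$-transitivity.
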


\begin{remark}\rm
Two known imprimitive examples of locally dihedral symmetric designs are the $2$-$(16,6,2)$ design and the $2$-$(15,8,4)$ design.
The $2$-$(16,6,2)$ design admits two flag-transitive, imprimitive automorphism groups, namely $\ZZ_2^2{:}\SS_4$ and $(\ZZ_4^2{:}\ZZ_3){:}\ZZ_2$, both with stabilizers $\DD_6$.
This design also admits two flag-transitive, imprimitive automorphism groups, $(\ZZ_2^2{:}\SS_4){:}\ZZ_2$ and $(\ZZ_4^2{:}\ZZ_3){:}\ZZ_2^2$,  with stabilizers $\DD_{12}$. The full automorphism group of this design is $\ZZ_2^4{:}\SS_6$, as described in \cite{RegueiroReduction}.
The $2$-$(15,8,4)$ design is isomorphic to the design derived from the points and complements of hyperplanes of the projective space
$\PG(3, 2)$.
Praeger and Zhou \cite[Proposition 1.5]{PraegerZhou} proved that this design admits a flag-transitive, imprimitive automorphism group $\SS_5$ with stabilizers $\DD_8$.

Examples of locally abelian symmetric designs also exist.
Let $G$ be the affine group $\ZZ_p{:}\ZZ_{\frac{p-1}{2}}\leq \AGL_1(p)$ acting on a set $\mathcal{P}$ of size $p$.
If $p\equiv 3\pmod{4}$, then $\frac{p-1}{2}$ is odd, and $G$ is a Frobenius group with exactly two suborbits of length $\frac{p-1}{2}$. By \cite[Proposition 3.1]{Kantor}, $G$ is 2-homogeneous and not 2-transitive on $\P$.
Let $\Delta$ be a suborbit of $G$. Then $\mathcal{D}=(\P,\{\Delta^g|g\in G\})$ is a Paley design (\cite[Theorem 1.1]{Kantor}) admitting a primitive, locally abelian automorphism group $G$.
Besides, we note that the above $2$-$(16,6,2)$ design also admits three imprimitive, locally abelian automorphism groups, namely, $(\ZZ_2^4{:}\ZZ_2){:}\ZZ_3$, $\ZZ_2^3.\AA_4$ and $(\ZZ_4{:}\DD_8){:}\ZZ_3$, with stabilizers $\ZZ_6$.
\end{remark}

Our second main result provides a classification of locally transitive, point-primitive designs with dihedral point stabilizers, which covers all point-locally dihedral, point-primitive designs for which the point-local action is faithful.

\begin{theorem}\label{ThmMain}
Let $\D$ be a $2$-$(v,k,\lambda)$ design and $G$ be a locally transitive, point-primitive automorphism group of $\D$ with point stabilizer $G_x$ a dihedral group. Then $\D$ is non-symmetric, and one of the following holds:
\begin{enumerate}
\item[\rm(a)]  $\D$ is a Witt-Bose-Shrikhande Space with $\lambda=1$, and $G=\PSL_2(q)\,(q=2^f\geq8)$ with $G_x=\DD_{2(q+1)}$.

\item[\rm(b)]  $\D$ is a $2$-$(6,3,2)$ design with $G=\PSL_2(5)$ and $G_x=\DD_{10}$.

\item[\rm(c)]  $\D$ is a $2$-$(10,4,2)$ design with $G=\PGL_2(5)$ and $G_x=\DD_{12}$.

\item[\rm(d)] $\D$ is a $2$-$(28,7,2)$ design with $G=\PSL_2(8)$ and $G_x=\DD_{18}$.

\item[\rm(e)]  $\D$ is a $2$-$(36,6,2)$ design with $G=\PSL_2(8)$ and $G_x=\DD_{14}$.

\item[\rm(f)]  $G=\ZZ_p^d{:} \DD_{2n}$ is a solvable affine group.
\end{enumerate}
\end{theorem}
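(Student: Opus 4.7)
The plan is first to rule out the symmetric case, then to invoke the classification of point-primitive groups with dihedral point stabilizers obtained in Section \ref{SecPriDihedral}, and finally to perform a case-by-case analysis to pin down the designs. A symmetric design with dihedral local action is forced by Theorem \ref{ThmSym} to be point-imprimitive, so under the hypothesis that $G$ is point-primitive, $\D$ must already be non-symmetric; this also frees us from having to equate $G_x$ and $G_B$ and lets us concentrate on the point-local structure.

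Since $G$ is point-primitive with $G_x\cong D_{2n}$, I would apply Theorem \ref{ProLCH} to list the possibilities for $G$. They fall into two families. If $G$ is of affine type, then the point stabilizer is a complement of the translation socle and $G$ has the form $Z_p^d{:}D_{2n}$, which is precisely conclusion (f); no further information about the design is extracted here, since the theorem only records the shape of $G$ in this case. The remaining possibilities are the almost simple ones: up to finitely many small exceptions the socle is $\PSL_2(q)$ with $G_x$ one of the dihedral maximal subgroups $D_{2(q-1)}$ or $D_{2(q+1)}$ (or an extension inside $\PGL_2(q)$).

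For each concrete pair $(G,G_x)$ from the almost simple list I would use the flag-transitivity constraints $v=|G{:}G_x|$, $r=|G_x|/|G_{x,B}|\mid 2n$, together with $bk=vr$ and $\lambda(v-1)=r(k-1)$, and then reconstruct the blocks. Since every subgroup of a dihedral group is cyclic or dihedral, the flag stabilizer $G_{x,B}\leq G_x$ has very few possible shapes; the block $B$ through $x$ must be the union of $\{x\}$ with some $G_{x,B}$-invariant subset of $\P\setminus\{x\}$ of size $k-1$, and such a subset is a union of suborbits of $G_x$ split further by $G_{x,B}$. Enumerating these candidates and imposing the $2$-design axiom via a pair-counting argument, the $\PSL_2(2^f)$ case with $G_x=D_{2(q+1)}$ should recover the Witt-Bose-Shrikhande space in (a), while the small instances with $T=\PSL_2(5)$, $\PGL_2(5)$ and $\PSL_2(8)$ should yield exactly the designs in (b)-(e), and every other $(G,G_x)$ from the list should be excluded.

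The main obstacle is the combinatorial bookkeeping in the generic $\PSL_2(q)$ case: for each $q$ and each choice of dihedral stabilizer we have to enumerate the admissible block sizes $k$ coming from the divisibility constraints, determine which unions of $G_{x,B}$-orbits can actually serve as a block, and discard configurations that fail the constant-$\lambda$ condition. The nontrivial intersection of two maximal dihedrals $D_{2(q-1)}$ and $D_{2(q+1)}$ in $\PGL_2(q)$ highlighted in the introduction is what lets us bound these intersections and carry out the pair-counting step, so I expect the proof to lean heavily on that lemma together with explicit suborbit data for $\PSL_2(q)$ on cosets of a dihedral group.
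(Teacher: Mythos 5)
Your proposal follows essentially the same route as the paper: exclude the symmetric case via Theorem \ref{ThmSym}, reduce to affine or almost simple type via Theorem \ref{ProLCH} (and Lemma \ref{LemmaDihedralAffAlmost}), and settle the almost simple cases, which all have socle $\PSL_2(q)$, by arithmetic on $(v,r,k,\lambda)$ together with the intersection result of Proposition \ref{LemmaMeet>1}. One caveat: Theorem \ref{ThmSym} is stated only for $\lambda>1$, so before invoking it to rule out the symmetric case you must separately exclude symmetric designs with $\lambda=1$; the paper does this via Lemma \ref{LemmaLambda=1} and its dual, using Kantor's classification of flag-transitive projective planes to show the point stabilizer cannot then be dihedral. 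Also, where you propose reconstructing blocks as unions of $G_{x,B}$-orbits and verifying the $2$-design axiom by pair-counting, the paper instead combines the bound $\lambda v<r^2$ with the subdegree divisibility $r\mid\lambda|\Gamma|$ of Lemma \ref{LemmaDavies} to force $\lambda$ into a short list, then quotes existing classifications of flag-transitive designs with $\lambda=1$, $\lambda=2$ and $\gcd(k,\lambda)=1$, and finishes the small cases by computer; your direct enumeration is not wrong in principle but would be considerably more laborious to carry out.
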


\begin{remark}\rm$\,$
\begin{itemize}
\item[(1)] All groups $G$ in Theorem \ref{ThmMain}\,(a)--(e) have a faithful local action of $G_x$ on $\D(x)$.
\item[(2)] The affine space ${\rm AG}(2,4)$ is an example for Theorem \ref{ThmMain}\,(f), namely, a $2$-$(16,4,1)$ design, admitting a flag-transitive automorphism group $G=\ZZ_2^{4}{:}\DD_{10}\leq {\rm AGL}_4(2)$.
\end{itemize}
\end{remark}

In Section \ref{SecProof2}, an application of the main theorems is given in Corollary \ref{ThmGB2p}, where we prove that a symmetric design admitting a flag-transitive automorphism group with point stabilizers of order $2p$ ($p$ is prime) is a unique $2$-$(16,6,2)$ design.
This is motivated in part by Quirin \cite{QuirinSmallOrbitals} and Wang \cite{WangJiesuborbitun2p}, who previously studied primitive groups with point stabilizers $\DD_{2p}$.

\medskip

\section{Primitive groups with dihedral point stabilizers}\label{SecPriDihedral}

To prove the main theorems, we need an explicit classification of primitive permutation groups with dihedral point stabilizers.
This classification problem was first studied by Quirin \cite{QuirinSmallOrbitals}, who proved that if a primitive group $G$ with point stabilizers isomorphic to $\DD_{2p}$ with $p$ prime has a faithful suborbit, then $G$ is simple or solvable.
Afterward, simple primitive groups with point stabilizers $\DD_{2p}$ were classified by Wang \cite{WangJiesuborbitun2p}.
In 2009, Li \cite[Lemma 3.1]{LicaihengD2n} tackled primitive groups with general dihedral point stabilizers and determined the socles.
Further, a classification of primitive groups with solvable stabilizers was studied by Li and Zhang \cite{ZhanghuaLicaiheng} in 2011.
While,  some reduction details were omitted in \cite{LicaihengD2n}, and the complete classification is not presented in both \cite{LicaihengD2n} and \cite{ZhanghuaLicaiheng}.
We supplement the reduction details in Lemma \ref{LemmaDihedralAffAlmost} and give a complete classification in Theorem \ref{ProLCH}.

We note that if a primitive group $G\leq\Sym(\Omega)$ with non-trivial abelian point stabilizers is a Frobenius group.
In fact, for any distinct points $x_1, x_2 \in \Omega$, if $G_{x_1}=G_{x_2}$, then $G$ is regular. So $G_{x_1}\ne G_{x_2}$.
Since $G_x$ is abelian, for $g \in G_{x_1} \cap G_{x_2}$, we have $\langle g\rangle\unlhd\langle G_{x_1}, G_{x_2}\rangle=G$. But $\langle g\rangle$ has fixed points on $\P$, yielding that $\langle g\rangle$ fixes every point of $\P$, namely, $g=1$. Thus $G$ is a Frobenius group.
Further, by \cite{LiebeckCyclicStabilizer}, the point stabilizer is cyclic.
A primitive Frobenius group is an affine group, and the structure of Frobenius groups can be referred to \cite[Section 3.4]{Dixon}, for example.

\begin{theorem}\label{ProLCH}
If $G$ is a primitive group with dihedral point stabilizers,
then ${\rm soc}(G)$ is either elementary abelian or non-abelian simple.
Further, if ${\rm soc}(G)$ is non-abelian simple, then $G$, ${\rm soc}(G)$ and $G_x$ are listed in Table {\rm\ref{TablePriGroupDihedralSta}}, up to conjugacy.
\end{theorem}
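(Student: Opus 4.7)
The plan is to reduce via the O'Nan--Scott theorem to the affine and almost simple types, and then for the almost simple case to go through the classification of finite simple groups and their maximal subgroups, paying particular care to how outer automorphisms extend a dihedral maximal subgroup of the socle.

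First I would rule out all O'Nan--Scott types other than affine and almost simple. If ${\rm soc}(G)=T^k$ with $T$ nonabelian simple and $k\geq 2$, then in the diagonal types (SD, HC, CD) the stabilizer $G_x$ contains a full diagonal copy of $T$; in the product action type (PA) it contains a nontrivial direct product of almost simple groups; and in the twisted wreath type (TW) it again contains a nontrivial direct product of simple groups. None of these structures is dihedral, so ${\rm soc}(G)$ must be either elementary abelian or nonabelian simple. This establishes the first assertion of the theorem.

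For the almost simple case, set $T={\rm soc}(G)$ and $D=G_x\cap T$. Primitivity of $G$ forces $G_x$ to be maximal in $G$; since $G/T\leq\Out(T)$, the subgroup $D$ is normalized by $G_x$ and lies in some maximal subgroup of $T$. I would then scan the standard lists of maximal subgroups of simple groups (Bray--Holt--Roney-Dougal and Kleidman--Liebeck for classical groups, the Atlas and exceptional-group references for the rest) to isolate those $T$ admitting a dihedral maximal subgroup $D$ that extends inside $\Aut(T)$ to a dihedral group. The dominant infinite family is $T=\PSL_2(q)$, whose maximal dihedral subgroups of orders $q\pm1$ or $2(q\pm1)/\gcd(2,q-1)$ are well known; the remaining small candidates (tiny $\PSL_2(q)$, Suzuki groups, a handful of sporadic and exceptional groups, etc.) must be inspected individually, as was done partially in \cite{LicaihengD2n} and \cite{WangJiesuborbitun2p}, and is already constrained by the Li--Zhang classification \cite{ZhanghuaLicaiheng} of primitive groups with solvable point stabilizers.

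The main obstacle, and the precise point at which \cite{LicaihengD2n} is incomplete, is the outer-automorphism bookkeeping. Given $T\leq G\leq\Aut(T)$ with $D\leq T$ a dihedral maximal subgroup, one must decide whether $G_x=N_G(D)$ is itself dihedral (and still maximal in $G$) after extending by field, diagonal, or graph automorphisms. In $\PSL_2(q)$ this requires tracking how the Frobenius and $\PGL_2$ extensions act on the normalizer of a split or nonsplit torus and on the involution inverting that torus, which leads to delicate parity and congruence conditions on $q$. Once these conditions are resolved, and the remaining small simple groups are handled by direct inspection (possibly with computer algebra), one assembles the final table of Theorem \ref{ProLCH}, verifying primitivity in each case by confirming maximality of $G_x$ in $G$.
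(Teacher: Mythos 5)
Your overall strategy (O'Nan--Scott reduction, then CFSG plus maximal-subgroup lists, then outer-automorphism bookkeeping for $\PSL_2(q)$) matches the paper's, but the proposal has two genuine gaps. First, your dismissal of the product action type rests on a false structural claim: the point stabilizer there contains $(T_\alpha)^\ell$, where $T_\alpha$ is a point stabilizer of the simple component $T$ in its primitive action on $\Delta$, and this is in general \emph{not} a direct product of almost simple groups (for $T=\PSL_2(q)$ on the projective line it is a product of Borel subgroups, which are solvable). A priori $T_\alpha$ could even be abelian, in which case $(T_\alpha)^\ell$ could be, say, $Z_2\times Z_2\unlhd D_8$, and your argument collapses. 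The paper closes this hole with Lemma \ref{Tx_not_abelian}, a Burnside normal $p$-complement argument showing that the socle point stabilizer of an almost simple primitive group is never abelian; combined with the fact that every subgroup of $D_{2n}$ is cyclic, dihedral or $Z_2\times Z_2$, this forces $T_\alpha$ and $(T_\alpha)^\ell$ both to be dihedral with $[(T_\alpha)^\ell:T_\alpha]=2$, which is impossible for $\ell\geq2$. You need this lemma (or an equivalent) for the PA case to go through.

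Second, you correctly identify the outer-automorphism bookkeeping for $\PSL_2(q)$ as the crux, but you only describe it; the content of the theorem is precisely the resolution of those ``parity and congruence conditions.'' The paper's Lemma \ref{LemmaDihedralPSL2q} carries this out: writing $\Out(T)=\langle\overline{\delta}\rangle\times\langle\overline{\phi}\rangle$ and $G_x=\langle T_x,\delta^i\phi^e\rangle$, it shows that any extension involving a nontrivial field automorphism would force $(p^{2e}-1)/d$ to divide $p^e\pm1$ (split torus, class $\mathcal{C}_2$) or $p^{2e}+1$ to divide $d(p^e\pm1)$ (nonsplit torus, class $\mathcal{C}_3$), which is impossible, so only $G=\PSL_2(q)$ and $G=\PGL_2(q)$ survive. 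One must also separately treat the cases where $T_x=G_x\cap T$ is \emph{not} maximal in $T$ (the novelty maximals $\PGL_2(7)$ with $D_{12}$ or $D_{16}$, $\PGL_2(9)$, $\PGL_2(11)$), and the Suzuki groups, where $|\Out(\Sz(q))|$ being odd immediately forces $G=\Sz(q)$. Without these computations the table cannot actually be assembled, so as written the proposal is a correct outline rather than a proof.
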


\begin{table}[h]
	\caption{Almost simple primitive groups with dihedral stabilizers}
	\label{TablePriGroupDihedralSta}
	\centering
	\begin{tabular}{llll}
		\hline
		$G$\quad\quad & ${\rm soc}(G)$\quad\quad\quad\quad & $G_x$ & Remark\\
		\hline
		$\AA_5$\quad\quad & $\AA_5$\quad\quad\quad\quad &  $\DD_{6}$, $\DD_{10}$\\
		
		$\SS_5$\quad\quad & $\AA_5$\quad\quad\quad\quad & $\DD_{12}$\\

		$\PSL_2(q)$\quad\quad & $\PSL_2(q)$\quad\quad\quad\quad & $\DD_{2(q+1)/(2,q-1)}$& $q\neq7,9$\\
		
		\quad\quad &  \quad\quad\quad\quad& $\DD_{2(q-1)/(2,q-1)}$ & $q\neq5,7,9,11$\\
		
		&&$\DD_{10}$ &$q=5$\\

		$\PGL_2(q)$\quad\quad & $\PSL_2(q)$ \quad\quad\quad\quad& $\DD_{2(q+1)}$ &$q$ is odd\\

		\quad\quad &\quad\quad\quad\quad & $\DD_{2(q-1)}$ &$q$ is odd, $q\neq5$\\
		
		$\PSL_3(2).2$\quad\quad & $\PSL_3(2)$\quad\quad\quad\quad & $\DD_{12}$, $\DD_{16}$\\
		
		$\Sz(q)$\quad\quad & $\Sz(q)$\quad\quad\quad\quad & $\DD_{2(q-1)}$\\
		
		${\rm^2G_2}(3)'$\quad\quad & ${\rm^2G_2}(3)'$\quad\quad\quad\quad & $\DD_{14}$, $\DD_{18}$\\
		\hline
	\end{tabular}
\end{table}

\begin{remark}\label{RemarkOfTable1}\rm
Table \ref{TablePriGroupDihedralSta} lists all cases according to the types of their socles, which contains duplications.
The isomorphic relations are:
$\AA_5\cong\PSL_2(4)\cong\PSL_2(5)$, $\SS_5\cong\PGL_2(5)$, ${\rm^2G_2}(3)'\cong\PSL_2(8)$, $\PSL_3(2).2\cong\PGL_2(7)$ and $\PSL_3(2)\cong\PSL_2(7)$.
Moreover, except $G=\PSL_2(5)$ with its dihedral stabilizer $G_x=\DD_{10}$ lying in the $\mathcal{C}_1$-subgroup of the Aschbacher-classes, every dihedral stabilizer is in the classes $\mathcal{C}_2$ or $\mathcal{C}_3$.
\end{remark}

In the following we prove Theorem \ref{ProLCH}.
The classification relies on the O'Nan-Scott Theorem, which classifies finite primitive permutation groups.
The five-types version is given below.

\begin{lemma}{\rm(O'Nan-Scott theorem {\rm\cite{ONan1988}})}\label{LemmaONan}
If $G$ is a finite primitive permutation group, then $G$ is one of the following types:
$(a)$ affine; $(b)$ almost simple; $(c)$ simple diagonal; $(d)$ product action; and $(e)$ twisted wreath product.
\end{lemma}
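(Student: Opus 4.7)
The plan is to classify a finite primitive permutation group $G$ on $\Omega$ by first pinning down the structure of its socle $N={\rm soc}(G)$, and then refining the classification according to how $N$ acts on $\Omega$ and how the point stabilizer $G_\omega$ intersects $N$. The first step is to establish that $N$ is a direct product of mutually isomorphic simple groups, say $N\cong T^k$. This rests on two facts specific to primitive groups: every nontrivial normal subgroup is transitive, and any two distinct minimal normal subgroups centralize each other and act regularly with the same orbits. Since $N$ is characteristically simple, the decomposition $N\cong T^k$ follows, and the isomorphism type of $T$ (abelian vs.\ non-abelian) determines the first major split.

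Next, I would handle the abelian case: if $T$ is abelian then $T\cong Z_p$, so $N$ is elementary abelian of order $p^d$, acts regularly on $\Omega$, and is self-centralizing (since $C_G(N)\unlhd G$ is either trivial or transitive, but contains the regular abelian subgroup $N$). Hence $G\leq N{:}G_\omega\leq\AGL_d(p)$, giving the affine case (a). For the non-abelian case, consider the conjugation action of $G$ on the set of $k$ simple factors $\{T_1,\dots,T_k\}$. When $k=1$, $N=T$ is non-abelian simple with $C_G(N)=1$, so $G$ embeds into $\Aut(T)$, yielding the almost simple case (b). When $k\geq2$, I would refine according to the structure of $N\cap G_\omega$ and the induced action on the factors: the simple diagonal case (c), where $N$ is transitive with $N\cap G_\omega$ a ``full diagonal'' subgroup of $T^k$ and the action on factors is primitive; the product action case (d), where $G$ preserves a Cartesian decomposition $\Omega=\Gamma^\ell$ compatible with a nontrivial block system of factors; and the twisted wreath case (e), where $N$ acts regularly on $\Omega$ via a twisted wreath product construction.

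The main obstacle will lie in the non-abelian multi-factor case, namely showing that the subcases (c), (d), (e) are both exhaustive and mutually exclusive. One has to argue, from primitivity of $G$ on $\Omega$ together with the transitive conjugation action of $G$ on the factors $T_i$, that the projection of $N\cap G_\omega$ onto each $T_i$ is either the whole $T_i$, a proper subgroup giving a Cartesian decomposition, or trivial (forcing the regular/twisted wreath configuration). The twisted wreath product case is especially delicate: it was historically overlooked in the original accounts of O'Nan and Scott, and correctly identifying it requires showing that the core ${\rm Core}_G(G_\omega)$ can be trivial even when the socle is regular and non-abelian, and that the resulting extension $G=N{:}G_\omega$ is genuinely ``twisted'' rather than a standard wreath product. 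Once these three non-abelian subcases are separated by analyzing $N\cap G_\omega$ and the action of $G_\omega$ on the factors of $N$, the full statement of the theorem follows.
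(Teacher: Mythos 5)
The paper does not prove this lemma at all: it is imported verbatim from the literature as \cite{ONan1988}, so there is no in-paper argument to compare yours against. Your outline does follow the standard strategy of that reference (and of Scott's and Aschbacher--Scott's treatments): identify ${\rm soc}(G)\cong T^k$ via the transitivity of nontrivial normal subgroups and the fact that two distinct minimal normal subgroups are regular mutual centralizers; split on $T$ abelian versus non-abelian; dispose of the affine case by self-centralization of a regular elementary abelian socle; and for non-abelian $T$ split on $k=1$ versus $k\geq 2$ and on the structure of $N\cap G_\omega$ and the induced action on the $k$ simple factors. To that extent the proposal is aimed in the right direction and correctly flags the historically subtle twisted wreath case.

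However, as a proof it has a genuine gap exactly where you say the "main obstacle" lies, and that obstacle is the theorem. The entire content of the O'Nan--Scott theorem in the $k\geq 2$ non-abelian case is the proof that the projections of $N\cap G_\omega$ onto the simple factors, together with the $G_\omega$-action on the set of factors, force one of precisely the three configurations (c), (d), (e); this requires the machinery of full diagonal subgroups and strips, the verification that a proper nontrivial "product-type" stabilizer yields a $G$-invariant Cartesian decomposition $\Omega=\Gamma^\ell$ with the component group primitive of type (b) or (c) on $\Gamma$ (which is where primitivity of $G$ is actually used, via maximality of $G_\omega$), and a separate argument that a regular non-abelian socle with $k\geq 2$ and no regular complement issues gives the twisted wreath construction. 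You have named these steps but not carried any of them out, and you have also not addressed how the case of two minimal normal subgroups (the holomorph configurations) is absorbed into types (c) and (d) in the five-type formulation stated here. Deferring all of this leaves the proposal as a correct table of contents for the proof in \cite{ONan1988} rather than a proof.
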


Inspired by the argument in \cite[Section 3.1]{LicaihengD2n}, we have the following lemma.

\begin{lemma}\label{Tx_not_abelian}
  Let $G$ be an almost simple primitive group on $\Omega$ with socle $T$, and  let $x \in \Omega$. Then $T_x$ is not abelian.
\end{lemma}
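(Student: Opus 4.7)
The plan is to reduce the statement to the assertion that a non-abelian finite simple group has no abelian maximal subgroup, and then to derive this from Burnside's normal $p$-complement theorem.

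First I would show that $T_x$ is maximal in $T$. Since $G$ is almost simple and primitive, $T = {\rm soc}(G)$ is non-abelian simple and normal in $G$, while $G_x$ is maximal in $G$. Minimal normality forces $T$ to be transitive on $\Omega$, so $G = TG_x$ and $T_x = T \cap G_x$. For any $H$ with $T_x \leq H \leq T$, the product $HG_x$ lies between $G_x$ and $G$, and maximality of $G_x$ forces $HG_x = G_x$ (whence $H \leq T \cap G_x = T_x$) or $HG_x = G$ (whence a brief index calculation gives $|H| = |T|$, so $H = T$). Thus $T_x$ is a maximal subgroup of $T$; in particular $T_x \neq 1$, since otherwise $T$ would be of prime order, contradicting $T$ non-abelian simple.

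Next I would assume for contradiction that $T_x$ is abelian, pick a prime $p$ dividing $|T_x|$, and let $P$ be the Sylow $p$-subgroup of $T_x$. Being characteristic in $T_x$, the subgroup $P$ is normalized by $T_x$, so $T_x \leq N_T(P)$. Maximality of $T_x$ forces $N_T(P) = T_x$ or $N_T(P) = T$; the latter would mean $P \triangleleft T$, contradicting simplicity of $T$ (since $1 \neq P \leq T_x < T$). Hence $N_T(P) = T_x$. A standard $p$-group argument then shows that $P$ is in fact Sylow in $T$: any strictly larger $p$-subgroup $Q \supsetneq P$ would satisfy $N_Q(P) > P$, yet $N_Q(P) \leq N_T(P) = T_x$, and since the Sylow $p$-subgroup of the abelian group $T_x$ is precisely $P$, we would get $N_Q(P) \leq P$, a contradiction.

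Finally, since $T_x$ is abelian and $N_T(P) = T_x$, we have $P \leq T_x = Z(N_T(P))$, so Burnside's normal $p$-complement theorem supplies a normal $p$-complement $K \triangleleft T$. Simplicity of $T$ forces $K = 1$ or $K = T$: the former makes $T$ a non-abelian $p$-group (impossible for a simple group), and the latter gives $P = 1$, contradicting the choice of $p$. This contradiction completes the argument. I anticipate no real obstacle; the only slightly delicate point is promoting the maximality of $G_x$ in $G$ to the maximality of $T_x$ in $T$, but this is handled by the index computation in the first step, after which the nonexistence of abelian maximal subgroups of non-abelian simple groups is a CFSG-free consequence of Burnside's theorem.
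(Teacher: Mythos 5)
Your overall strategy (reduce to Burnside's normal $p$-complement theorem via $N_T(P)=C_T(P)$ for a Sylow $p$-subgroup $P$ of $T_x$ that turns out to be Sylow in $T$) is exactly the paper's, and the second and third steps of your argument are fine. The problem is your first step: the claim that $T_x=T\cap G_x$ is maximal in $T$ is false in general, and the argument you give for it does not work. For a subgroup $H$ with $T_x\leq H\leq T$, the product $HG_x$ need not be a subgroup of $G$ unless $H$ is normalized by $G_x$, so you cannot invoke the maximality of $G_x$ on it. Concretely, the phenomenon of ``novelty'' maximal subgroups gives counterexamples that appear in this very paper: for $G=\PGL_2(7)$ with $G_x=D_{12}$ maximal in $G$, one has $T_x=T\cap G_x\cong S_3$, which is properly contained in a subgroup $S_4$ of $T=\PSL_2(7)$; similarly $G=\PGL_2(9)$ with $G_x=D_{16}$ gives $T_x\cong D_8<S_4<A_6$. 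So the dichotomy ``$N_T(P)=T_x$ or $N_T(P)=T$'' that you extract from this false maximality claim is unjustified.

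The gap is repairable, and the repair is what the paper does: since $P$ is characteristic in $T_x$ and $T_x\unlhd G_x$, one gets $P\unlhd G_x$, hence $G_x\leq N_G(P)$. As $T$ is the unique minimal normal subgroup of $G$ and $1<P\leq T_x<T$, the subgroup $P$ is not normal in $G$, so $N_G(P)\neq G$, and maximality of $G_x$ \emph{in $G$} yields $N_G(P)=G_x$; intersecting with $T$ gives $N_T(P)=T_x$. From that point on your normalizer-growth argument showing $P$ is Sylow in $T$, the identity $T_x=N_T(P)=C_T(P)$, and the Burnside contradiction all go through verbatim. You should replace your first paragraph with this normalizer argument rather than trying to establish maximality of $T_x$ in $T$.
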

\begin{proof}
  Assume that $T_x$ is abelian. Let $P$ be a Sylow $p$-subgroup of $T_x$. Since $P$ is a characteristic subgroup of $T_x$ and $T_x \lhd G_x$, we have $P \lhd G_x$ and $G_x \leqslant N_G(P) \leqslant G$. Note that $T$ is the minimal normal subgroup of $G$ and $P \leqslant T_x <T$. Hence $P$ is not a normal subgroup of $G$ and $N_G(P)\ne G$. Since $G_x$ is maximal on $G$, we have $G_x = N_G(P)$ and $T_x = N_T(P)$. As $T_x$ is abelian, $T_x = N_T(P)=C_T(P)$.

Claim that $P$ is a Sylow $p$-subgroup of $T$.
Let $S$ be a Sylow $p$-subgroup of $T$ containing $P$. If $P<S$, then by the properties of $p$-groups, $P<N_S(P)$. Thus there exist $n \in N_S(P) \setminus P$, such that $n \in N_S(P) \leqslant N_T(P)=T_x$. Hence $n$ lies in the unique Sylow $p$-subgroup $P$ of $T_x$, a contradiction. Then we have $P=S$ is a Sylow $p$-subgroup of $T$.

Now, by $N_T(P)=C_T(P)$ and a well-known theorem of Burnside, $T$ is $p$-nilpotent, that is $T=T_{p'}{:}T_p$, where $T_p$ is a Sylow $p$-subgroup of $T$ and $T_{p'}$ is a Hall $p'$-subgroup of $T$. It is impossible. Hence $T_x$ is not abelian.
\end{proof}

\begin{lemma}\label{LemmaDihedralAffAlmost}
If $G$ is a primitive group with dihedral point stabilizers, then $G$ is of affine type or almost simple type.
\end{lemma}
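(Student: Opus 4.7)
My plan is to apply the O'Nan--Scott Theorem (Lemma \ref{LemmaONan}) and show that each of the three remaining types, namely simple diagonal, product action, and twisted wreath product, is incompatible with $G_x$ being dihedral. The unifying theme is that in each of these types the socle of $G$ has the form $T^k$ with $T$ non-abelian simple and $k\geq 2$, and the structure this forces on $G_x$ turns out to be incompatible with sitting inside a dihedral group. I would repeatedly exploit three elementary facts about $D_{2n}$: it is solvable; its center has order at most $2$; and every subgroup of odd order in $D_{2n}$ lies in the cyclic rotation subgroup, hence is cyclic.

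For the simple diagonal type, the socle stabilizer $({\rm soc}(G))_x$ contains a diagonal subgroup isomorphic to the non-abelian simple group $T$, but then $G_x$ would contain $T$, contradicting the solvability of the dihedral $G_x$. For the product action type, $({\rm soc}(G))_x$ has the form $R^k$ with $k\geq 2$, where $R$ is the point stabilizer of an almost simple primitive constituent; by Lemma \ref{Tx_not_abelian}, $R$ is non-abelian, hence of order at least $6$. I would split into two subcases: if $R$ has an element of odd prime order $p$, then $R^k$ contains $Z_p\times Z_p$, which cannot lie inside any $D_{2n}$ by the odd-order observation above; if instead $R$ is a non-abelian $2$-group, then $Z(R)\ne 1$ and so $Z(R^k)=Z(R)^k$ has order at least $2^k\geq 4$, contradicting the bound on the center of a dihedral group. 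Finally, for the twisted wreath product type, the construction provides a subgroup $P_0\leq G_x$ together with a twisting homomorphism $\phi\colon P_0\to\Aut(T)$ whose image contains ${\rm Inn}(T)\cong T$, so that $T$ arises as a composition factor of $P_0$ and hence of $G_x$, once again contradicting solvability.

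None of the individual cases looks like a serious obstacle, since all of them boil down to numerical restrictions on subgroups of dihedral groups. The only point that requires any care is the product action case when the almost simple constituent happens to have a non-abelian $2$-group as its point stabilizer (for example $D_8$ or $Q_8$): there the odd-prime argument is unavailable, but the center-order comparison fills the gap. Everything else is a short reading of the O'Nan--Scott list.
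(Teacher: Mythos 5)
Your strategy coincides with the paper's: invoke the O'Nan--Scott theorem (Lemma \ref{LemmaONan}) and eliminate the simple diagonal, product action and twisted wreath types. The simple diagonal and twisted wreath cases are handled exactly as in the paper (non-solvability of a dihedral $G_x$), and your product-action analysis starts from the same two facts, namely ${\rm soc}(G)_x\cong R^{k}$ with $k\geq 2$ and Lemma \ref{Tx_not_abelian} forcing $R$ to be non-abelian. One small omission: in the five-type version you are quoting, the primitive constituent $H$ in the product action case may be of simple diagonal type as well as almost simple type; you only treat the latter. This is harmless (if $H$ is of simple diagonal type then ${\rm soc}(H)_\alpha$ is non-abelian simple, contradicting solvability of $G_x$, as the paper notes), but the subcase should be stated.

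The one step that genuinely fails as written is your non-abelian $2$-group subcase. From $|Z(R^{k})|=|Z(R)|^{k}\geq 4$ you conclude a contradiction with ``the bound on the center of a dihedral group'', but the center of a subgroup of $G_x$ need not lie in $Z(G_x)$: for instance the rotation subgroup $Z_4\leq D_8$ has center of order $4$ while $|Z(D_8)|=2$. The argument is rescued by one extra observation: every subgroup of a dihedral group is cyclic or dihedral, so the non-abelian group $R^{k}$ would itself have to be dihedral, and a dihedral group of order at least $6$ has center of order at most $2$ --- that is the contradiction you want. (Alternatively, the derived subgroup of a dihedral group is cyclic, whereas $(R^{k})'=(R')^{k}$ is a direct product of $k\geq 2$ isomorphic nontrivial groups, hence non-cyclic.) With these two patches your proof is correct. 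It differs from the paper's only in this final subcase: rather than splitting on whether $R$ contains an element of odd prime order, the paper observes that $T_\alpha\lhd (T_\alpha)^{\ell}\lhd G_x\cong D_{2n}$ with both subgroups non-abelian, hence both dihedral, forcing $[(T_\alpha)^{\ell}:T_\alpha]=2$, which is absurd --- a single argument that avoids your case distinction.
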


\begin{proof}
Note that the dihedral point stabilizer is solvable.
For primitive groups of twisted wreath product type, the point stabilizer has a non-abelian composition factor (see \cite[Section 1]{ONan1988}, for example).
For primitive groups of simple diagonal type, the point stabilizer ${\rm soc}(G)_x$ of the socle is isomorphic to a non-abelian simple group.
Hence these two types are ruled out.

Suppose that $G$ is of product action type, acting on the set $\Delta^{\ell}$ with $\ell\geq2$.
Then ${\rm soc}(H)^{\ell}={\rm soc}(G)\leq G\leq H\wr S_{\ell}$, where $H$ is a primitive group of almost simple type or diagonal type on the set $\Delta$.
Let $x=(\alpha,\alpha,\ldots,\alpha)\in\Delta^{\ell}$ with $\alpha\in\Delta$.
Then ${\rm soc}(H)_{\alpha}^{\ell}={\rm soc}(G)_x\unlhd G_x\cong D_{2n}$.
If $H$ is of simple diagonal type, then ${\rm soc}(H)_{\alpha}$ is a non-abelian simple group, which is clearly impossible.
If $H$ is of almost simple type, then ${\rm soc}(G)=T^{\ell}$ and ${\rm soc}(G)_x= (T_{\alpha})^{\ell}$, where $T={\rm soc}(H)$ is non-abelian and simple.
Now $T_{\alpha} \lhd (T_{\alpha})^{\ell} \lhd G_x=\DD_{2n}$.
For a dihedral group $\DD_{2n}$, each proper normal subgroup is cyclic, $\ZZ_2\times \ZZ_2$ (if $n=4$), or $\DD_{n}$ (if $n \geqslant 6$ is even).
By Lemma \ref{Tx_not_abelian}, we know that $T_{\alpha}$ is not abelian.
Hence $ (T_{\alpha})^{\ell}$ is not abelian, and so both $ (T_{\alpha})^{\ell}$ and $T_{\alpha}$ are dihedral.
But this implies that  $[(T_{\alpha})^{\ell}:T_{\alpha}]=2$, a contradiction.
\end{proof}

\begin{lemma}\label{LemmaDihedralPSL2q}
If $G$ is an almost simple primitive group with socle $\PSL_2(q)$, where $q\geq5$ is a prime power, admitting a dihedral point stabilizer $G_x$, then $G$ and $G_x$ are listed in Table {\rm\ref{TablePSL}}.
\end{lemma}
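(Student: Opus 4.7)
My plan is to exploit Dickson's classification of the subgroup lattice of $\PSL_2(q)$, together with the description of the outer automorphism group $\Out(\PSL_2(q))$ and its action on conjugacy classes of subgroups, to enumerate all almost simple groups $G$ with socle $T = \PSL_2(q)$ that admit a maximal dihedral subgroup $G_x$. Since $G_x$ is maximal in $G$ and dihedral (hence solvable), the intersection $T_x := G_x \cap T$ is normal in $G_x$, so $T_x$ embeds into a dihedral group; by Lemma \ref{Tx_not_abelian}, $T_x$ is non-abelian, and therefore $T_x$ itself is dihedral. The maximality of $G_x$ in $G$ then forces $G_x = N_G(T_x)$, reducing the problem to identifying the dihedral subgroups $T_x \leq T$ which arise as intersections of this form.

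I would first pin down $T_x$ by invoking Dickson's theorem: the maximal subgroups of $T$ are the Borel subgroup, the normalizers of the split and non-split tori (dihedral of orders $2(q-1)/(2,q-1)$ and $2(q+1)/(2,q-1)$), the subfield subgroups $\PSL_2(q_0)$ and $\PGL_2(q_0)$, and, for specific small $q$, the sporadic subgroups $A_4$, $S_4$, and $A_5$. The only dihedral candidates for $T_x$ are the two torus normalizers and, for small $q$, one of the sporadic subgroups. Using the standard subgroup tables I would verify precisely when these are maximal in $T$, producing the exceptional values $q\in\{5,7,9,11\}$ that appear as exclusions in Table \ref{TablePSL}. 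For $G = \PGL_2(q)$ with $q$ odd, the normalizer $N_G(T_x)$ doubles the order of $T_x$ and yields $D_{2(q\pm 1)}$, whose maximality I would read off from the subgroup list of $\PGL_2(q)$, handling $q=5$ separately because $D_{2(q-1)}=D_8$ then fails to be maximal.

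The delicate part, and where I expect most of the work, is ruling out field-automorphism extensions, that is, showing $G \leq \PGL_2(q)$ whenever $q = p^f$ with $f > 1$. Here I would argue that any element of $G_x$ projecting non-trivially to the field-automorphism component of $\Out(T)$ would normalize $T_x$ and induce on it a non-inner automorphism permuting the $T$-classes of tori; the resulting extension of the dihedral group $T_x$ would pick up a quotient of order $f$ (or $\gcd(f,{\rm something})$) that cannot be accommodated inside any dihedral group, contradicting the assumption on $G_x$. A careful case analysis on the parity of $q$ and on the structure of $\Out(T) \cong Z_{(2,q-1)} \times Z_f$ completes this step. Finally, I would resolve the small cases $q \in \{5,7,8,9,11\}$ by direct inspection in the corresponding $\PSL_2(q)$, invoking the exceptional isomorphisms $\PSL_2(5)\cong A_5$, $\PSL_2(7)\cong\PSL_3(2)$, $\PSL_2(9)\cong A_6$ and $\PSL_2(8)\cong {}^2G_2(3)'$, to confirm the remaining entries of Table \ref{TablePSL}.
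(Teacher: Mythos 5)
Your overall strategy coincides with the paper's: reduce to $T_x=G_x\cap T$ dihedral via Lemma \ref{Tx_not_abelian}, identify the dihedral subgroups of $\PSL_2(q)$ from the known subgroup structure, treat $\PGL_2(q)$, and then exclude field-automorphism extensions. The substance of the lemma, however, lies in that last exclusion, and the mechanism you propose for it does not work. Since $T_x$ is a non-abelian dihedral normal subgroup of the dihedral group $G_x$, the quotient $G_x/T_x\cong G/T$ is automatically of order at most $2$ (a dihedral group has no proper non-abelian normal subgroup of index greater than $2$). So your argument that the extension ``picks up a quotient of order $f$ that cannot be accommodated inside any dihedral group'' only disposes of the trivial case where $G/T$ would have order exceeding $2$; it says nothing about the genuinely delicate case $f=2e$ with $G/T=\langle\overline{\delta}^{\,i}\overline{\phi}^{\,e}\rangle$ of order $2$, where an index-$2$ extension of $D_{2n}$ by an involution is perfectly capable of being dihedral. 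What must actually be shown is that the involution $\delta^i\phi^e$ cannot act on the cyclic part of $T_x$ as $\pm1$: it acts as the $p^e$-power map on a cyclic group of order $(q\mp1)/d$ (or $q\mp1$), and one checks that $p^e\equiv\pm1$ fails for these orders when $q=p^{2e}$, with the single borderline case $q=9$ eliminated by inspection. Your remark that the field automorphism ``permutes the $T$-classes of tori'' is also not the right mechanism, since field automorphisms preserve the types of tori; the obstruction is the explicit power-map action, not a permutation of classes.

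A smaller point: your list of dihedral candidates for $T_x$ from Dickson's theorem (the two torus normalizers and, for small $q$, a sporadic subgroup) is incomplete and slightly off. The sporadic subgroups $A_4$, $S_4$, $A_5$ are never dihedral, whereas the Borel subgroup $q{:}\frac{q-1}{(2,q-1)}$ \emph{is} dihedral precisely for $q=5$, giving the $\mathcal{C}_1$-entry $D_{10}\leq\PSL_2(5)$ in Table \ref{TablePSL}. Your promised direct inspection of the small cases would presumably recover this, but as written the enumeration misses it.
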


\begin{table}[h]
	\caption{Dihedral maximal subgroups of almost simple groups with socle $\PSL_2(q)$}
	\label{TablePSL}
	\centering
	\begin{tabular}{lll}
		\hline
		$G$\quad\quad \quad\quad\quad\quad & $G_x$ \quad\quad \quad\quad\quad\quad\quad\quad& Remark\\
		\hline
		$\PSL_2(q)$\quad\quad \quad\quad\quad\quad &$\DD_{2(q+1)/(2,q-1)}$& $q\neq7,9$\\
		&$\DD_{2(q-1)/(2,q-1)}$ & $q\neq5,7,9,11$\\
		&$\DD_{10}$ &$q=5$\\
		\hline
		$\PGL_2(q)$\quad\quad \quad\quad\quad\quad& $\DD_{2(q+1)}$ & $q$ is odd\\
		\quad\quad \quad\quad\quad\quad & $\DD_{2(q-1)}$ & $q$ is odd, $q\neq5$\\
		\hline
	\end{tabular}
\end{table}

\begin{proof}
Let $T$ be the socle of an almost simple primitive group $G$.
Then $T_x\unlhd G_x$, and so $T_x$ is either dihedral, or cyclic, or $\ZZ_2\times \ZZ_2$ with $G_x\cong \DD_8$.
By Lemma \ref{Tx_not_abelian}, $T_x$ is not abelian.
Hence $T_x$ must be dihedral.

Let $T=\PSL_2(q)$, $q=p^f$, and $d=\gcd(2,q-1)$.
When $T_x=T\cap G_x$ is not maximal in $T$, the cases are listed in \cite[Theorem 1.1 and Table 1]{MGiudici}.
These are $G=\PGL_2(7)$ with $G_x=\DD_{12}$ or $\DD_{16}$, $G=\PGL_2(9)$ with $G_x=\DD_{16}$ or $\DD_{20}$ and $G=\PGL_2(11)$ with $G_x=\DD_{20}$.
We then tackle the case when $T_x$ is maximal in $T$.
By \cite[Theorem 2.1 and 2.2]{MGiudici}, we have $T_x=\DD_{2(q\pm1)}$ ($q$ is even), or $T_x=\DD_{q-1}$ ($q$ is odd and $q\notin\{5,7,9,11\})$, or $T_x=\DD_{q+1}$($q$ is odd and $q\notin\{7,9\}$), or $T_x=\DD_{10}$($q=5$), as stated in the line 2 of Table \ref{TablePSL}.
Now $G_x=T_x.O$, where $O\cong G/T\leq\Out(T)$. Let $\phi$ be the field automorphism of $\GF_q$ taking $p$-th power, $\mu$ be a generator of $\GF_q^\times$, and
\[
\delta=
\begin{bmatrix}
\mu & 0\\
0 & 1
\end{bmatrix}
\in\GL_2(q).
\]
Let $\,\overline{\phantom{w}}\,$ be the homomorphism from $\GammaL_2(q)$ to $\PGammaL_2(q)$ modulo scalars, and let $\overline{\delta}$ and $\overline{\phi}$ be the image of $\PGammaL_2(q)=\Aut(T)$ in $\Out(T)=\Aut(T)/T$.
Then $\Out(T)=\langle\overline{\delta}\rangle\times\langle\overline{\phi}\rangle$ with $o(\overline{\delta})=d$ and $o(\overline{\phi})=f$.

Since $T_x$ and $G_x$ are both dihedral, it follows that $O\cong G_x/T_x$ is a cyclic subgroup of $\Out(T)$, and so $|O|=1$ or $2$.
If $|O|=1$, then $G=\PSL_2(q)$ and this corresponds to the first line of Table \ref{TablePSL}.
In the following we assume that $|O|=2$.
Then $O=\langle\overline{\delta}\rangle$ or $\langle\overline{\delta}^i\overline{\phi}^e\rangle$, where $i\in\{0,1\}$, $o(\overline{\phi}^e)=2$ and $f=2e$.
If $O=\langle\overline{\delta}\rangle$, then $G=\PGL_2(q)$ and the maximal subgroups are listed in \cite[Theorem 3.5]{MGiudici}.
These dihedral maximal subgroups are $\DD_{2(q-1)}$ ($q\neq5$) and $D_{2(q+1)}$.
In the following we assume that $O=\langle\overline{\delta}^i\overline{\phi}^e\rangle$.

First assume that $T_x=\DD_{\frac{2(q-1)}{d}}$ is a $\mathcal{C}_2$-subgroup in the Aschbacher-classes \cite{Aschbacher}.
Since the maximal subgroups $G_x$ of such a type are conjugate, assume without loss of generality that $T_x=\langle a\rangle{:}\langle b\rangle$ is normalized by $\delta$ and $\phi$, where the preimages of $a$ and $b$ in $\GL_2(q)$ are
\[
\begin{bmatrix}
\lambda & 0\\
0 & \lambda^{-1}
\end{bmatrix}
\,\,\mbox{and}\,\,
\begin{bmatrix}
0 & 1\\
-1 & 0
\end{bmatrix},
\]
respectively, with $\lambda$ generating $\GF_q^{\times}$. It follows that
\[
a^{\phi^e}
=\overline{\begin{bmatrix}
\lambda & 0\\
0 & \lambda^{-1}
\end{bmatrix}}^{\phi^e}
=\overline{\begin{bmatrix}
\lambda^{p^e} & 0\\
0 & \lambda^{-p^e}
\end{bmatrix}}
=a^{p^e}
\]
and
\[
a^{\delta}
=\overline{\begin{bmatrix}
\lambda & 0\\
0 & \lambda^{-1}
\end{bmatrix}}^{\delta}
=\overline{\begin{bmatrix}
\mu^{-1}\lambda\mu & 0\\
0 & \lambda^{-1}
\end{bmatrix}}
=\overline{\begin{bmatrix}
\lambda & 0\\
0 & \lambda^{-1}
\end{bmatrix}}
=a.
\]
Note that $O=\langle\overline{\delta}^i\overline{\phi}^e\rangle=\langle\overline{\delta^i\phi^e}\rangle$, whence $G=\langle T, \delta^i\phi^e\rangle$ and $G_x=\langle T_x, \delta^i\phi^e\rangle$. Therefore, $\delta^i\phi^e$ is either contained in the cyclic group $N$ of index 2 in $G_x$, or an involution contained in $G_x\setminus N$.
In the former case, $\delta^i\phi^e\in C_{G_x}(N)$ and so
\[
a=a^{\delta^i\phi^e}=a^{p^e},
\]
which yields $a^{p^e-1}=1$, contradicting $o(a)=\frac{q-1}{d}=\frac{p^{2e}-1}{d}$ .
For the latter case, $\delta^i\phi^e$ maps $a$ to its inverse by conjugation, namely,
\[
a^{-1}=a^{\delta^i\phi^e}=a^{p^e}.
\]
This forces that $\frac{p^{2e}-1}{d}=o(a)$ divides $p^e+1$, and so $q=p^{2e}=9$. However, this is not possible by~\cite{atlas}.

Next assume that $T_x\cong \DD_{\frac{2(q+1)}{d}}$ is a $\mathcal{C}_3$-subgroup in the Aschbacher-classes.
Then the normalizer $H$ of $T_x$ in
$\mathrm{P\Gamma L}_2(q)$ satisfies $H=\langle\overline{\omega}\rangle{:}\langle\overline{\psi}\rangle$, where $\omega$ is a generator of $\GF_{q^2}^{\times}$ and $\psi$ is a field automorphism over $\GF_{q^2}$, such that
$\overline{\omega}^{\overline{\psi}}=\overline{\omega}^p$,
\[
H\cap\PGL_2(q)=\langle \overline{\omega}\rangle{:}\langle\overline{\psi}^{2e}\rangle\ \text{, and }\ T_x=H\cap\PSL_2(q)=\langle\overline{\omega}^d\rangle{:}\langle\overline{\psi}^{2e}\rangle.
\]

Since $G_x/T_x\cong O=\langle\overline{\delta}^i\overline{\phi}^e\rangle$, $T_x$ has index $2$ in $G_x \leqslant H$, and there exists $h=\overline{\omega}^s\overline{\psi}^t\in G_x$ for some integers $s$ and $t$, such that $G_x=\langle T_x, h \rangle$. Note that $h^2 =\overline{\omega}^s\overline{\psi}^t\overline{\omega}^s\overline{\psi}^t =\overline{\omega}^s(\overline{\psi}^t \overline{\omega}^s(\overline{\psi}^{t})^{-1}) \overline{\psi}^{2t}\in T_x$. Then $t=0$ or $e$.
If $t=0$, then $G=\PGL_2(q)$, as stated in the line 3 of Table \ref{TablePSL}. Hence, $t=e$.
As $G_x$ is dihedral and $\langle\overline{\omega}^d\rangle$ is contained in its cyclic subgroup of index $2$, it follows that $h$ either commutes with $\overline{\omega}^d$ or maps $\overline{\omega}^d$ to its inverse by conjugation. Thus,
\[
\overline{\omega}^{\pm d}=(\overline{\omega}^d)^h=(\overline{\omega}^d)^{\overline{\omega}^s\overline{\psi}^e}
=(\overline{\omega}^d)^{\overline{\psi}^e}=(\overline{\omega}^{dp^e}),
\]
and so $p^{2e}+1=o(\overline{\omega})$ divides $d(p^e\pm1)$, not possible.

Finally, the only group having a dihedral maximal subgroup of class $\mathcal{C}_1$ is $\PSL_2(5)$ with $G_x=\DD_{10}$.
Hence Table~\ref{TablePSL} is complete.
\end{proof}

\begin{lemma}\label{LemmaDihedralSuzuki}
If $G$ is an almost simple primitive group with socle ${\rm soc}(G)=\Sz(q)$, admitting a dihedral point stabilizer $G_x$, then $G=\Sz(q)$ with $G_x\cong \DD_{2(q-1)}$.
\end{lemma}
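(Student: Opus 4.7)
The plan is to exploit the smallness of $\Out(\Sz(q))$ together with the subgroup structure of a dihedral group to collapse $G$ onto its socle $T=\Sz(q)$, and then identify $G_x$ from Suzuki's classification of maximal subgroups of $\Sz(q)$.

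First, I would set $T_x=T\cap G_x$, so that $T_x\lhd G_x\cong D_{2m}$. By Lemma \ref{Tx_not_abelian}, $T_x$ is non-abelian, and a routine inspection shows that the only non-abelian normal subgroups of $D_{2m}$ are $D_{2m}$ itself and, when $m$ is even, its two dihedral subgroups of index two. Hence $[G_x:T_x]\in\{1,2\}$. On the other hand, $G_x/T_x\cong G_xT/T$ embeds in $G/T\leq\Out(T)$, and $\Out(\Sz(q))\cong Z_f$ is cyclic of odd order, where $q=2^f$ with $f$ odd. Consequently $[G_x:T_x]=1$, so $G_x=T_x\leq T$.

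Since $G$ is primitive, $G_x$ is core-free and therefore cannot contain the non-trivial normal subgroup $T$; combined with $G_x\leq T$ and the maximality of $G_x$ in $G$, this forces $T=G$. Thus $G=\Sz(q)$ and $G_x$ is a maximal dihedral subgroup of $\Sz(q)$. By Suzuki's classification, the maximal subgroups of $\Sz(q)$ are the Borel subgroup, the torus normalizer $D_{2(q-1)}$, the two Frobenius groups $(q\pm r+1){:}Z_4$ with $r=2^{(f+1)/2}$, and the subfield subgroups $\Sz(q_0)$ with $q=q_0^\ell$ and $\ell$ an odd prime. Only $D_{2(q-1)}$ in this list is dihedral, yielding $G_x\cong D_{2(q-1)}$. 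The main technical points---the subgroup calculation in $D_{2m}$ and the accurate recall of Suzuki's list---are entirely routine, so no serious obstacle is anticipated.
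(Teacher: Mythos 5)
Your proof is correct and follows essentially the same route as the paper: use the oddness of $\Out(\Sz(q))$ to force $G=\Sz(q)$, then read off $G_x\cong D_{2(q-1)}$ from the list of maximal subgroups of $\Sz(q)$. You in fact supply a detail the paper's one-line argument glosses over, namely why $[G_x:T_x]$ can only be $1$ or $2$ (via Lemma \ref{Tx_not_abelian} and the normal subgroup structure of dihedral groups).
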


\begin{proof}
If ${\rm soc}(G)<G$, then $G_x$ is an extension of ${\rm soc}(G)_x=\Sz(q)_x$ by an outer automorphism of order 2.
While $|\Out(\Sz(q))|$ is odd, this is impossible. So $G=\Sz(q)$ and $G_x\cong \DD_{2(q-1)}$.
\end{proof}

\medskip

\noindent{\bf Proof of Theorem \ref{ProLCH}.}
According to \cite[Lemma 3.1]{LicaihengD2n}, the only possible socle of an almost simple group with dihedral stabilizers is $\PSL_2(q)$ and $\Sz(q)$.
So it is obtained immediately from Lemmas \ref{LemmaONan}-\ref{LemmaDihedralSuzuki} and by adding the isomorphic cases.
$\square$

\medskip

The following proposition studies the intersection of two maximal subgroups in $\PGL_2(q)$, respectively of type $\DD_{2(q-1)}$ and $\DD_{2(q+1)}$. This result is useful in the proof of classifying point-locally dihedral $2$-designs.

\begin{proposition}\label{LemmaMeet>1}
Let $G=\PGL_2(q)$ with $q$ odd. Then any maximal subgroup of $G$ of type $\DD_{2(q-1)}$ has non-trivial intersection with any maximal subgroup of type $\DD_{2(q+1)}$.
\end{proposition}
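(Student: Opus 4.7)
The plan is to exhibit an explicit non-trivial involution lying in $H_1\cap H_2$, by viewing $G=\PGL_2(q)$ as the group of $\GF_q$-rational M\"obius transformations acting on $\mathbb{P}^1(\GF_{q^2})=\GF_{q^2}\cup\{\infty\}$. Under this action, every maximal subgroup of type $D_{2(q-1)}$ in $G$ is the setwise stabilizer of an unordered pair of distinct $\GF_q$-rational points, while every maximal subgroup of type $D_{2(q+1)}$ is the setwise stabilizer of an unordered pair $\{\eta,\eta^q\}$ with $\eta\in\GF_{q^2}\setminus\GF_q$ (a Galois-conjugate pair); both identifications follow from the $\mathcal{C}_2$- and $\mathcal{C}_3$-descriptions already invoked in Lemma~\ref{LemmaDihedralPSL2q}.

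Since $H_1\cap H_2$ transforms equivariantly under simultaneous $G$-conjugation of the pair $(H_1,H_2)$, and all $D_{2(q-1)}$-maximal subgroups are $G$-conjugate, I may assume that $H_1$ is the stabilizer of $\{0,\infty\}$; then $H_2$ is the stabilizer of some pair $\{\eta,\eta^q\}$ with $\eta\in\GF_{q^2}\setminus\GF_q$. Consider the M\"obius transformation
\[
\phi\colon z\longmapsto \frac{N(\eta)}{z},
\]
where $N(\eta)=\eta\cdot\eta^q\in\GF_q^{\times}$ denotes the norm from $\GF_{q^2}$ to $\GF_q$. Since $N(\eta)\in\GF_q^{\times}$, the map $\phi$ is represented by a matrix with entries in $\GF_q$, so $\phi\in\PGL_2(q)$. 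A direct check gives $\phi(0)=\infty$, $\phi(\infty)=0$, $\phi(\eta)=N(\eta)/\eta=\eta^q$, and $\phi(\eta^q)=N(\eta)/\eta^q=\eta$, so $\phi$ stabilizes both pairs, whence $\phi\in H_1\cap H_2$. Since $\phi\neq 1$, this yields $H_1\cap H_2\neq 1$.

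No case analysis in $q$ is required: the argument is uniform and avoids the $\GL_2$-matrix bookkeeping used in the proof of Lemma~\ref{LemmaDihedralPSL2q}. The main conceptual step is recognising each type of dihedral maximal subgroup of $\PGL_2(q)$ as the stabilizer of a point-pair on the projective line (rational in the $D_{2(q-1)}$-case, Galois-conjugate over $\GF_{q^2}$ in the $D_{2(q+1)}$-case); once this is in place, the explicit involution $z\mapsto N(\eta)/z$ produces the required non-trivial element in a single line.
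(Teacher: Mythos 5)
Your proof is correct. The identifications you use are the right ones: $\PGL_2(q)$ is sharply $3$-transitive on $\mathbb{P}^1(\GF_q)$, so the setwise stabilizers of rational pairs form a single conjugacy class of subgroups $D_{2(q-1)}$ of the right index, and the setwise stabilizers of Frobenius-conjugate pairs $\{\eta,\eta^q\}$ form a single class of subgroups $D_{2(q+1)}$ (the normalizers of the non-split tori), again of the right index; your reduction to $H_1=G_{\{0,\infty\}}$ is legitimate, and the element $z\mapsto N(\eta)/z$ visibly lies in $\PGL_2(q)$, swaps $0$ with $\infty$ and $\eta$ with $\eta^q$, and is non-trivial. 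The paper's proof follows exactly the same strategy --- exhibit one explicit non-scalar element interchanging the two defining pairs of the respective dihedral subgroups --- but realizes it in a different model: it identifies $\PGL_2(q)$ with $\PGU_2(q)$ acting on a $2$-dimensional Hermitian space over $\GF_{q^2}$, takes $D_{2(q-1)}$ as the stabilizer of a hyperbolic pair $\{\langle\boldsymbol{e}\rangle,\langle\boldsymbol{f}\rangle\}$ and $D_{2(q+1)}$ as the stabilizer of a pair of perpendicular non-singular $1$-spaces, and writes down a matrix $A\in\GU_2(q)$ swapping both pairs. Under the standard dictionary between the unitary and M\"obius pictures these are the same construction, but your version is the more elementary of the two: it needs no Hermitian form and the verification is a one-line norm computation, at the cost of having to justify (as you do) the pair-stabilizer descriptions of the two classes of dihedral maximal subgroups. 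Note also that your argument never uses maximality, so it proves the slightly stronger statement that any $D_{2(q-1)}$ of this form meets any $D_{2(q+1)}$ non-trivially, which covers the vacuous case $q=5$ automatically.
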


\begin{proof}

We identify $G=\PGL_2(q)$ as the unitary group $\PGU_2(q)$ over the unitary space $\GF_{q^2}^2$ with a non-singular conjugate-symmetric sesquilinear form $\beta$.

Let $\{\boldsymbol{e}, \boldsymbol{f}\}$ be a basis for the unitary space $\GF_{q^2}^2$ such that $\beta(\boldsymbol{e},\boldsymbol{e})=\beta(\boldsymbol{f},\boldsymbol{f})=0$ and $\beta(\boldsymbol{e},\boldsymbol{f})=1$, and let $H=G_{\{\langle\boldsymbol{e}\rangle,\langle\boldsymbol{f}\rangle\}}\cong \DD_{2(q-1)}$.
Let $\boldsymbol{u}$ and $\boldsymbol{v}$ be any non-singular vectors such that $\boldsymbol{u}\bot\boldsymbol{v}$. Since $\beta(\boldsymbol{u},\boldsymbol{u}) \ne 0$, we have $\boldsymbol{u}=\lambda_1\boldsymbol{e}+\lambda_2\boldsymbol{f}$ with $\lambda_1, \lambda_2 \in\GF_{q^2} \setminus \{0\}=\GF_{q^2}^{\times}$. Replacing $\boldsymbol{u}$ by its nonzero multiple, we have $\boldsymbol{u}=\boldsymbol{e}+\lambda\boldsymbol{f}$ for some $\lambda\in\GF_{q^2}^{\times}$. Then let $K=G_{\{\langle\boldsymbol{u}\rangle,\langle\boldsymbol{v}\rangle\}}\cong \DD_{2(q+1)}$.
Since $\boldsymbol{u}\bot\boldsymbol{v}$, replacing $\boldsymbol{v}$ by its nonzero multiple if necessary, we have $\boldsymbol{v}=\boldsymbol{e}-\lambda^q\boldsymbol{f}$.
Take $\mu\in \GF_{q^2}^{\times}$ such that $\mu^{q+1}=-1$.
Let $a=\lambda\mu$ and $A\in \GL_2(q^2)$ such that $\boldsymbol{e}^A=a\boldsymbol{f}$ and $\boldsymbol{f}^A=a^{-q}\boldsymbol{e}$.
Then $A$ preserves $\{\langle\boldsymbol{e}\rangle,\langle\boldsymbol{f}\rangle\}$ and $\beta$. In particular, $A\in \GU_2(q)$. Moreover,
\begin{align*}
\boldsymbol{u}^A=\boldsymbol{e}^A+\lambda\boldsymbol{f}^A&=a\boldsymbol{f}+\lambda a^{-q}\boldsymbol{e}\\
&=\lambda a^{-q}(\boldsymbol{e}+\lambda^{-1}a^{q+1}\boldsymbol{f})
=\lambda a^{-q}(\boldsymbol{e}-\lambda^{q}\boldsymbol{f})=\lambda a^{-q}\boldsymbol{v},\\
\boldsymbol{v}^A=\boldsymbol{e}^A-\lambda^q\boldsymbol{f}^A&=a\boldsymbol{f}-\lambda^q a^{-q}\boldsymbol{e}\\
&=-\lambda^q a^{-q}(\boldsymbol{e}-\lambda^{-q}a^{q+1}\boldsymbol{f})
=-\lambda^q a^{-q}(\boldsymbol{e}+\lambda\boldsymbol{f})=\lambda^q a^{-q}\boldsymbol{u},
\end{align*}
and so $A$ preserves $\{\langle\boldsymbol{u}\rangle,\langle\boldsymbol{v}\rangle\}$. Consequently, the image of $A$ in $\PGU_2(q)$ lies in $H\cap K$. Since $A$ is not a scalar, it follows that $H\cap K>1$.
Note that all dihedral subgroups of order $2(q+1)$ in $G$ are conjugate. For any maximal subgroup $L$ of type $\DD_{2(q+1)}$, we have $L=K^g=G_{\{\langle\boldsymbol{u^g}\rangle,\langle\boldsymbol{v^g} \rangle\}}$ where $\boldsymbol{u^g}$ and $\boldsymbol{v^g}$ are also non-singular perpendicular vectors. Hence $H \cap L$ is also non-trivial. Since all dihedral subgroups of order $2(q-1)$ in $G$ are conjugate, this proposition holds.
\end{proof}

\medskip

\section{Some preliminary results on block designs}\label{SecPre}

In this section we present some preliminary results concerning block designs and their automorphism groups, which are necessary in the proof of the main results.
The following are basic arithmetic properties of a 2-design. Some of these results are well known and can be found in \cite[Sections 3.2.3, 3.2.4, 3.2.9]{Biggs}, for examples.
\begin{lemma}\label{LemmaParameters}
The parameters of a $2$-design $\mathcal{D}$ satisfy the following:
\begin{enumerate}
\item[\rm(a)]  $r(k-1)=\lambda(v-1)$.

\item[\rm(b)]  $bk=vr$.

\item[\rm(c)] $b\geq v$ {\rm(}Fisher's inequality{\rm)}.

\item[\rm(d)] $\lambda v<r^2$.
\end{enumerate}
If $\D$ is a symmetric design, then these are reduced to
\begin{enumerate}
\item[\rm(a$'$)]  $k(k-1)=\lambda(v-1)$.

\item[\rm(c$'$)]  $b=v$ and $r=k$.

\item[\rm(d$'$)] $\lambda v<k^2$.
\end{enumerate}
\end{lemma}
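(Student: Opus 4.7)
The plan is to treat the four assertions of Lemma~\ref{LemmaParameters} in turn using standard tools: (a) and (b) via double counting, (c) via a linear-algebra rank argument on the incidence matrix, and (d) by algebraic manipulation combining (a) with (c). The symmetric-design statements then follow at once from the definitional identity $b=v$ together with (b).

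For (a) and (b) I would use pure double counting. Part (b) is obtained by counting incident point-block pairs in two ways: each of the $b$ blocks contributes $k$ such pairs, while each of the $v$ points contributes $r$. For (a) I would count triples $(x,y,B)$ with $x\neq y$ and $\{x,y\}\subseteq B$: fixing a single point $x$ gives $r(k-1)$ such triples, whereas summing over the $v-1$ possible partners $y$ and using the defining design property $\lambda$ yields $\lambda(v-1)$.

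The only genuinely non-arithmetic step is Fisher's inequality (c), and this is the main obstacle. I would form the $v\times b$ point-block incidence matrix $N$ and compute $NN^{T}=(r-\lambda)I_v+\lambda J_v$, a matrix of the form $\alpha I+\beta J$ whose spectrum is $\{rk,\,r-\lambda,\ldots,r-\lambda\}$. The crux is to verify that both eigenvalues are strictly positive; this reduces via (a) to the inequality $r>\lambda$, which in turn follows from the non-triviality hypothesis $v>k$. Once $NN^{T}$ is seen to be non-singular, $N$ has row rank $v$ and so $b\geq v$.

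For (d), combining (b) and (c) gives $r\geq k$, after which a short manipulation using (a) reduces $\lambda v<r^{2}$ to the equivalent form $v(k-1)<r(v-1)$, and this is immediate from $r\geq k$ and $v>k$. Finally, the symmetric identities are essentially free: $b=v$ is the definition, (b) then forces $r=k$, and substitution in (a) and (d) produces (a$'$) and (d$'$).
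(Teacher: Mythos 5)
Your proof is correct and is exactly the standard argument (double counting for (a) and (b), the $NN^{T}=(r-\lambda)I+\lambda J$ rank argument for Fisher's inequality, and the derivation of (d) from $r\geq k$ and $v>k$) that the paper simply cites from the textbook references rather than proving. All the individual steps check out, including the verification that $r>\lambda$ follows from (a) and $k<v$, so $NN^{T}$ is nonsingular.
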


\begin{lemma}{\rm\cite[Proposition 1.1]{Cameron}}\label{LemmaLargegroup}
Let $X$ be a permutation group on $\Omega$, and $B\subseteq\Omega$.
If $\mathcal{D}=(\Omega,B^X)$ is a $2$-design, then for any $M$ with $X\leq M\leq \Sym(\Omega)$, $\mathcal{D}=(\Omega,B^M)$ is also a $2$-design.
\end{lemma}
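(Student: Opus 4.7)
The plan is to verify the $2$-design property of $(\Omega, B^{M})$ directly from the definition, namely that every pair of distinct points of $\Omega$ is contained in the same number of blocks of $B^{M}$. The uniform block size is automatic, since $M$ acts as permutations and every block in $B^{M}$ is an image of $B$, hence has size $k = |B|$; only the replication for pairs needs work.

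For a subgroup $Y \leq \Sym(\Omega)$ and a $2$-subset $\{p,q\} \subseteq \Omega$, I will introduce the element count $N_{Y}(p,q) = |\{y \in Y : \{p,q\} \subseteq B^{y}\}|$ alongside the block count $\lambda_{Y}(p,q) = |\{C \in B^{Y} : \{p,q\} \subseteq C\}|$. Since $\{y \in Y : B^{y} = C\}$ is a coset of the block stabiliser $Y_{B}$ for every $C \in B^{Y}$, these counts are related by $N_{Y}(p,q) = |Y_{B}|\,\lambda_{Y}(p,q)$. Hence it suffices to show that $N_{M}(p,q)$ is independent of $\{p,q\}$.

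To compare $N_{M}$ with $N_{X}$, I would fix a right coset decomposition $M = \bigsqcup_{i} X g_{i}$ and exploit the equivalence $\{p,q\} \subseteq B^{xg_{i}}$ iff $\{p,q\}^{g_{i}^{-1}} \subseteq B^{x}$ to obtain
\[
N_{M}(p,q) \;=\; \sum_{i} N_{X}\bigl(\{p,q\}^{g_{i}^{-1}}\bigr).
\]
The hypothesis that $(\Omega, B^{X})$ is a $2$-design with parameter $\lambda$, say, makes each summand equal to $\lambda|X_{B}|$. Substituting gives $N_{M}(p,q) = [M:X]\,\lambda\,|X_{B}|$, which is manifestly independent of $\{p,q\}$. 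Dividing by $|M_{B}|$ then yields a constant replication $\lambda_{M}(p,q) = [M:X]\,\lambda\,|X_{B}|/|M_{B}|$ for $B^{M}$, as required.

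I do not anticipate a serious obstacle. The one subtlety is the bookkeeping that distinguishes the count of group elements sending $B$ to a block through $\{p,q\}$ from the count of such blocks themselves; this is precisely what the stabiliser factors $|Y_{B}|$ absorb, and once that identification is in place the coset decomposition reduces everything to the hypothesis on $X$.
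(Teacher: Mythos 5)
Your argument is correct and complete: the identification $N_{Y}(p,q)=|Y_{B}|\,\lambda_{Y}(p,q)$ together with the coset decomposition of $M$ over $X$ is exactly the standard double-counting proof, and it matches the argument behind the cited source (the paper itself quotes this lemma from Cameron--Praeger without reproving it). No gaps.
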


The following are well known properties of flag-transitive automorphism groups of a 2-design.
\begin{lemma}\label{LemmaFlagtrProperties}
Let $G$ be an automorphism group of a $2$-design. Then the following are equivalent:
\begin{enumerate}
\item[\rm(a)] $G$ is flag-transitive.

\item[\rm(b)] $G$ is transitive on $\mathcal{B}$ and $G_B$ is transitive on $B$ for any block $B\in\B$.

\item[\rm(c)] $G$ is transitive on $\mathcal{P}$ and $G_x$ is transitive on  $\D(x)$ for any point $x\in\P$.
\end{enumerate}
\end{lemma}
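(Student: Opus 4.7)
The plan is to establish the chain (a)$\Rightarrow$(b)$\Rightarrow$(a) and (a)$\Rightarrow$(c)$\Rightarrow$(a) by a straightforward orbit argument on the set of flags. Let $\mathcal{F}=\{(x,B)\in\P\times\B:x\in B\}$ denote the set of flags, on which $G$ acts via $(x,B)^g=(x^g,B^g)$.

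For (a)$\Rightarrow$(b): Assume $G$ is transitive on $\mathcal{F}$. Since every block $B\in\B$ is the second coordinate of some flag (as $k\geq1$), projecting to $\B$ shows that $G$ is transitive on $\B$. Now fix $B\in\B$ and let $x_1,x_2\in B$. Then $(x_1,B)$ and $(x_2,B)$ are flags, so by flag-transitivity there exists $g\in G$ with $(x_1,B)^g=(x_2,B)$; in particular $B^g=B$, so $g\in G_B$, and $x_1^g=x_2$. Hence $G_B$ is transitive on $B$.

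For (b)$\Rightarrow$(a): Assume $G$ is transitive on $\B$ and each $G_B$ is transitive on $B$. Given two flags $(x_1,B_1)$ and $(x_2,B_2)$, transitivity on $\B$ yields $g\in G$ with $B_1^g=B_2$, so $(x_1,B_1)^g=(x_1^g,B_2)$ is a flag with $x_1^g\in B_2$. Since $G_{B_2}$ is transitive on $B_2$, there exists $h\in G_{B_2}$ with $(x_1^g)^h=x_2$, giving $(x_1,B_1)^{gh}=(x_2,B_2)$. Thus (a) holds.

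The equivalence (a)$\Leftrightarrow$(c) is proved by the symmetric argument with the roles of points and blocks (and of $B$ and $\D(x)$) interchanged: projecting $\mathcal{F}$ to $\P$ (each point lies in $r\geq1$ blocks, so this projection is surjective) gives transitivity on $\P$, and the stabilizer of a flag $(x,B)$ under the $G_x$-action is $G_{x,B}$, from which one reads off that $G_x$ is transitive on $\D(x)$ iff $G$ is transitive on the flags through points in the $G$-orbit of $x$. Since the argument is entirely parallel, no genuine obstacle arises; this lemma is a folklore routine check, and my proposal is simply to record these two short symmetric arguments cleanly.
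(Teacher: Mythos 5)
Your argument is correct: the two orbit computations on the flag set $\mathcal{F}$ establish (a)$\Leftrightarrow$(b), and the point--block symmetric version gives (a)$\Leftrightarrow$(c), exactly as expected. The paper itself states this lemma as well known and gives no proof, and your write-up is the standard routine verification that would be supplied.
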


\begin{lemma}\label{LemmaCubic}
Let $G$ be a flag-transitive automorphism group of a $2$-design with point stabilizer $G_x$.
Then $|G|<|G_x|^3$.
\end{lemma}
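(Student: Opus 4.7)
The plan is a short two-step combination of the flag-transitivity hypothesis and Fisher's bound $\lambda v<r^2$ from Lemma \ref{LemmaParameters}(d).

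First I would use Lemma \ref{LemmaFlagtrProperties}: a flag-transitive group is both point-transitive and, for each point $x$, has $G_x$ transitive on the set $\D(x)$ of $r$ blocks through $x$. The orbit--stabilizer relation then gives $|G|=v|G_x|$, while the transitivity of $G_x$ on $\D(x)$ forces $r$ to divide $|G_x|$ and in particular $|G_x|\geq r$.

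Next I would invoke Lemma \ref{LemmaParameters}(d), which says $\lambda v<r^2$. Since $\lambda\geq 1$ for any (non-trivial) 2-design, this specializes to $v<r^2$, and combined with $|G_x|\geq r$ it yields $|G_x|^2\geq r^2>v$. Multiplying by $|G_x|$ and using $|G|=v|G_x|$ gives
\[
|G_x|^3>v\cdot |G_x|=|G|,
\]
which is the desired inequality.

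There is essentially no obstacle here: the only substantive input is the Fisher-type inequality $\lambda v<r^2$ (already recorded in Lemma \ref{LemmaParameters}(d)), and the rest is orbit--stabilizer bookkeeping. The only mild care needed is to note that the hypothesis $\lambda\geq 1$ is automatic for the 2-designs considered in this paper (which are non-trivial), so the step $\lambda v<r^2\Rightarrow v<r^2$ is justified without further assumption.
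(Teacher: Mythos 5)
Your proof is correct; the paper states this lemma without printing a proof, and your argument (orbit--stabilizer giving $|G|=v|G_x|$ and $|G_x|\geq r$, combined with $\lambda v<r^2$ from Lemma \ref{LemmaParameters}(d)) is exactly the standard derivation the authors are implicitly relying on. No gaps.
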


Note that a subgroup $H$ of a finite group $G$ is said to be large if $|G|\leq|H|^3$.
Hence the point stabilizer of a flag-transitive automorphism group of a $2$-design is a large subgroup.
Alavi and Burness \cite{AlaviBurnessLarge} determined all the large maximal subgroups of finite simple groups.

\begin{lemma}{\rm\cite[p.1]{DaviesImpri}}\label{LemmaDavies}
Let $G$ be a flag-transitive automorphism group of a $2$-design, and $\Gamma$ be a non-trivial orbit of some point stabilizer $G_x$.
If $B$ is a block through $x$, then $r|\Gamma\cap B|=\lambda|\Gamma|$.
\end{lemma}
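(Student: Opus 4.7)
The plan is a standard double counting argument applied to the incidence set
\[
S=\{(y,B')\mid y\in\Gamma,\ B'\in\D(x),\ y\in B'\}.
\]
Since $\Gamma$ is a non-trivial orbit of $G_x$, it is disjoint from $\{x\}$, so every $y\in\Gamma$ is a point distinct from $x$. By the defining property of a $2$-design (with parameter $\lambda$), the number of blocks through both $x$ and $y$ is exactly $\lambda$. Counting $S$ by the first coordinate therefore yields $|S|=\lambda|\Gamma|$.

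For the count by the second coordinate, for each block $B'\in\D(x)$ the contribution is $|\Gamma\cap B'|$, giving
\[
|S|=\sum_{B'\in\D(x)}|\Gamma\cap B'|.
\]
The key observation is that this sum has all equal terms. Indeed, by Lemma~\ref{LemmaFlagtrProperties}, flag-transitivity of $G$ forces $G_x$ to act transitively on $\D(x)$. So for any two blocks $B,B'\in\D(x)$ there exists $g\in G_x$ with $B^g=B'$, and since $g\in G_x$ stabilises the orbit $\Gamma$ setwise we obtain
\[
|\Gamma\cap B'|=|\Gamma^g\cap B^g|=|(\Gamma\cap B)^g|=|\Gamma\cap B|.
\]
Hence the quantity $|\Gamma\cap B'|$ does not depend on the choice of $B'\in\D(x)$.

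Combining the two counts, since $|\D(x)|=r$ by definition, we get
\[
r\,|\Gamma\cap B|=\sum_{B'\in\D(x)}|\Gamma\cap B'|=|S|=\lambda|\Gamma|,
\]
which is the desired identity. The argument is wholly elementary; there is no real obstacle beyond verifying that flag-transitivity is exactly what is needed to make $|\Gamma\cap B'|$ independent of $B'\in\D(x)$, which is precisely what Lemma~\ref{LemmaFlagtrProperties}(c) supplies.
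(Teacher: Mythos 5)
Your double-counting argument is correct and is essentially the standard proof of this fact (the paper itself only cites Davies for it): counting incidences $(y,B')$ with $y\in\Gamma$, $B'\in\D(x)$, $y\in B'$ two ways, and using the $G_x$-transitivity on $\D(x)$ supplied by flag-transitivity to see that $|\Gamma\cap B'|$ is constant over $B'\in\D(x)$. Nothing is missing.
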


An automorphism group $G$ is said to be flag-regular if $G$ acts regularly on the set of flags. Such groups are also called sharply flag-transitive.

\begin{lemma}\label{LemmaDefFlagregular}
If $G$ is a block-transitive automorphism group of a $2$-design, then the following are equivalent:
\begin{enumerate}
\item[\rm (a)] $G$ is flag-regular.

\item[\rm (b)]  $G_x$ acts regularly on $\D(x)$.

\item[\rm (c)]  $G_B$ acts regularly on $B$.

\item[\rm (d)] $G$ is flag-transitive and $|G|=vr$.
\end{enumerate}
\end{lemma}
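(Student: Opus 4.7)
The plan is to exploit the fact that the set of incident point-block pairs of $\D$ has cardinality $vr = bk$ by Lemma \ref{LemmaParameters}(b), and to use Lemma \ref{LemmaFlagtrProperties} repeatedly to convert between flag-transitivity and the two local transitivity conditions. The equivalence (a) $\Leftrightarrow$ (d) is then immediate from the orbit-stabilizer theorem: a flag-transitive $G$ has trivial flag stabilizer exactly when $|G|$ equals the total number of flags $vr$, which is precisely what it means for the flag action to be regular.

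For (a) $\Rightarrow$ (b) and (a) $\Rightarrow$ (c), I would argue that flag-regularity forces the stabilizer of any flag $(x,B)$ to be $G_x \cap G_B = 1$. But this intersection coincides both with the stabilizer of $B$ in the action of $G_x$ on $\D(x)$ and with the stabilizer of $x$ in the action of $G_B$ on $B$. Combining the triviality of these two stabilizers with the transitivity of $G_x$ on $\D(x)$ and of $G_B$ on $B$ supplied by flag-transitivity via Lemma \ref{LemmaFlagtrProperties}, we upgrade both local actions from transitive to regular.

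The implication (c) $\Rightarrow$ (a) is then a short cycle: the regularity of $G_B$ on $B$ yields both the transitivity of $G_B$ on $B$ and the triviality $(G_B)_x = G_x \cap G_B = 1$. The first, together with the standing block-transitivity assumption, feeds Lemma \ref{LemmaFlagtrProperties}(b) to produce flag-transitivity, while the second shows the flag stabilizer is trivial, so $G$ is flag-regular.

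The main obstacle is (b) $\Rightarrow$ (a), where the hypothesis delivers a regular action of $G_x$ on $\D(x)$ but Lemma \ref{LemmaFlagtrProperties}(c) additionally requires $G$ to be transitive on $\P$, which is not literally part of the assumption. To close this gap I would invoke Block's lemma, which asserts that for any $2$-design the number of point orbits of an automorphism group is bounded above by its number of block orbits; hence block-transitivity automatically forces point-transitivity. With $G$ now known to be point-transitive and $G_x$ transitive on $\D(x)$, Lemma \ref{LemmaFlagtrProperties}(c) yields flag-transitivity, and the triviality of $(G_x)_B$ provided by regularity makes the flag stabilizer trivial, establishing flag-regularity and closing the cycle.
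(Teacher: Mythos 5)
Your proof is correct; the paper states this lemma without proof, treating it as a standard consequence of the orbit--stabilizer theorem and the identification of the flag stabilizer $G_{xB}=G_x\cap G_B$ with the point stabilizers of both local actions, which is exactly the argument you give. Your one genuinely nontrivial ingredient---invoking Block's lemma to upgrade block-transitivity to point-transitivity before applying Lemma \ref{LemmaFlagtrProperties}(c) in the step (b) $\Rightarrow$ (a)---is the right way to close that gap and is consistent with how the paper uses these equivalences elsewhere.
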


Some useful results concerning fixed points of automorphisms of symmetric designs are presented below:

\begin{lemma}\label{LemmaSameFixed}
An automorphism of a symmetric design fixes an equal number of fixed points and blocks.
\end{lemma}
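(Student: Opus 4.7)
The plan is to use the standard incidence-matrix argument. Let $A$ be the $v\times v$ incidence matrix of the symmetric design $\D$, where the $(x,B)$-entry is $1$ if $x\in B$ and $0$ otherwise. The key algebraic fact is the identity
\[
AA^{T}=(k-\lambda)I+\lambda J,
\]
where $J$ is the all-ones matrix; this is a direct consequence of the defining properties of a symmetric 2-design. First I would use this to show $A$ is invertible: the eigenvalues of $(k-\lambda)I+\lambda J$ are $k-\lambda+v\lambda=k^2$ (with the all-ones eigenvector) and $k-\lambda$ (with multiplicity $v-1$), so $\det(A)^{2}=k^{2}(k-\lambda)^{v-1}$. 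Since $\D$ is non-trivial we have $k>\lambda$, and hence $A$ is non-singular.

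Next, given an automorphism $g\in\Aut(\D)$, let $P$ and $Q$ be the permutation matrices describing the action of $g$ on points and on blocks, respectively. The fact that $g$ preserves incidence translates into the matrix equation
\[
PA=AQ,
\]
so that $Q=A^{-1}PA$. Thus $P$ and $Q$ are similar matrices and have the same trace.

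Finally, I would observe that the trace of a permutation matrix equals the number of its fixed basis vectors, so $\mathrm{tr}(P)$ is the number of points fixed by $g$ and $\mathrm{tr}(Q)$ is the number of blocks fixed by $g$. Equating the two traces yields the statement.

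The only real subtlety is justifying invertibility of $A$, which reduces to the well-known determinant computation above; everything else is a short trace argument. I expect no significant obstacle, since the non-triviality assumption $2<k<v-1$ in the paper already guarantees $k-\lambda>0$.
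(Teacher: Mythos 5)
Your proof is correct and complete: the identity $AA^{T}=(k-\lambda)I+\lambda J$, the determinant computation giving $\det(A)^{2}=k^{2}(k-\lambda)^{v-1}\neq 0$, and the similarity $Q=A^{-1}PA$ forcing $\mathrm{tr}(P)=\mathrm{tr}(Q)$ all check out, and $k>\lambda$ does follow from $k(k-1)=\lambda(v-1)$ with $k<v-1$. The paper itself gives no proof of this lemma (it is stated as a classical fact, found e.g.\ in Lander's or Dembowski's books), and the argument you give is precisely the standard incidence-matrix proof used there, so there is nothing to reconcile.
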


\begin{lemma}\label{LemmaInvolutionFixSym}{\rm\cite[Proposition 4.23]{Lander}}
{\rm
Let $\D$ be a non-trivial symmetric design and $g\in{\rm Aut}(\D)$ be an involution.
If $|{\rm Fix}_{\mathcal{P}}(g)|>0$, then one of the following holds:
\begin{enumerate}
\item[\rm(a)]  If $k$ and $\lambda$ are even, then $|{\rm Fix}_{\mathcal{P}}(g)|\geq\frac{k}{\lambda}+1$;
\item[\rm(b)]  For other cases, $|{\rm Fix}_{\mathcal{P}}(g)|\geq\frac{k-1}{\lambda}+1$.
\end{enumerate}
}
\end{lemma}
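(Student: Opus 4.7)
My plan is a double-counting argument at a fixed point, combined with parity constraints arising from the involutive action of $g$.

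Let $F=\mathrm{Fix}_{\P}(g)$ with $|F|=f>0$, and let $\mathcal{F}=\mathrm{Fix}_{\B}(g)$; by Lemma~\ref{LemmaSameFixed}, $|\mathcal{F}|=f$. I first establish two parity constraints on $|B\cap F|$. If $B\in\mathcal{F}$, then $g$ restricts to an involution on $B$ with $|B\cap F|$ fixed points and the rest paired, so $|B\cap F|\equiv k\pmod{2}$. If $B\notin\mathcal{F}$, then every fixed point of $B$ also lies in $B^g$, so $B\cap F\subseteq B\cap B^g$; since $\D$ is symmetric, $|B\cap B^g|=\lambda$, and the $g$-invariance of $B\cap B^g$ gives $|B\cap F|\equiv\lambda\pmod{2}$.

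Fixing $x_0\in F$, I count the pairs $(y,B)$ with $y\in F\setminus\{x_0\}$ and $B\supseteq\{x_0,y\}$ in two ways to obtain
\[
\lambda(f-1)=\sum_{B\ni x_0}\bigl(|B\cap F|-1\bigr).
\]
In Case~(a), with $k$ and $\lambda$ both even, the parity observations force $|B\cap F|$ even for every block $B\ni x_0$. Since $x_0\in B\cap F$ gives $|B\cap F|\geq 1$, we conclude $|B\cap F|\geq 2$, so each summand is $\geq 1$; summing over the $k$ blocks through $x_0$ yields $\lambda(f-1)\geq k$ and hence $f\geq k/\lambda+1$.

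In Case~(b), at least one of $k,\lambda$ is odd. I call a block $B\ni x_0$ a \emph{tangent} when $|B\cap F|=1$. By the parities above, a fixed tangent requires $k$ odd and a non-fixed tangent requires $\lambda$ odd; moreover, two fixed tangents $B_1,B_2$ at $x_0$ would make $g$ act on the $\lambda$-set $B_1\cap B_2$ with $x_0$ as its unique fixed point, forcing $\lambda$ odd, and analogously for a $g$-paired non-fixed tangent $\{C,C^g\}$ via $C\cap C^g$. These tangent constraints, combined with the sharper lower bounds on non-tangent blocks ($|B\cap F|\geq 2$ when $|B\cap F|$ is even, and $|B\cap F|\geq 3$ when $|B\cap F|$ is odd but $\ne 1$), yield $\lambda(f-1)\geq k-1$ after a case-split on the parities of $(k,\lambda)$, hence $f\geq(k-1)/\lambda+1$.

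The main obstacle is the subcases of (b) where tangents are not individually ruled out by parity, most notably when $k$ and $\lambda$ are both odd, where tangents of both types may coexist. There one must balance the potentially abundant tangents against the enhanced contribution of non-tangent blocks, exploiting the symmetric-design intersection property $|B\cap C|=\lambda$ and the pairing of non-fixed blocks under $g$, following the treatment in~\cite[Proposition~4.23]{Lander}.
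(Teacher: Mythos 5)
The paper offers no proof of this lemma---it is quoted verbatim from \cite[Proposition 4.23]{Lander}---so the only question is whether your argument is complete on its own terms. Your setup is sound: the identity $\lambda(f-1)=\sum_{B\ni x_0}(|B\cap F|-1)$ over the $r=k$ blocks through a fixed point $x_0$, the congruence $|B\cap F|\equiv k\pmod 2$ for fixed blocks, and $|B\cap F|\equiv\lambda\pmod 2$ for non-fixed blocks (via $B\cap F\subseteq B\cap B^g$) are all correct. Case (a) is complete, and so is the subcase of (b) with $k$ odd and $\lambda$ even, where your observation that two fixed tangents at $x_0$ would force $\lambda$ odd caps the number of zero summands at one.

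The genuine gap is the half of case (b) with $\lambda$ odd, which you acknowledge but do not close. There, non-fixed tangent blocks through $x_0$ come in $g$-paired families $\{C,C^g\}$, and nothing you state bounds how many such pairs there are: with $d$ fixed blocks (each contributing at least $1$), $2s$ non-fixed tangents (contributing $0$), and $n=k-d-2s$ non-fixed non-tangents (each contributing at least $2$), your estimates give only $\lambda(f-1)\geq d+2n=2k-d-4s$, and the target $k-1$ requires $2s\leq n+1$, which does not follow from the parity constraints or from the observation that a tangent pair forces $\lambda$ odd (that is consistent, not contradictory, in this subcase). In particular nothing in the outline excludes the extreme configuration in which most blocks through $x_0$ are non-fixed tangents, and the concluding sentence ``following the treatment in \cite[Proposition~4.23]{Lander}'' defers precisely the missing argument to the source being proved. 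Some additional global input---for instance an analysis of the incidence structure induced on fixed points and fixed blocks, or a count that controls tangents at $x_0$ using a second fixed point or a fixed block---is needed before case (b) can be considered established.
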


If $g$ is an automorphism of a symmetric design $\D$ and $g$ fixes each point of a block $B$, then we say $g$ is an axial automorphism with axis $B$.
Dually, if $g$ fixes each block through a given point $x$, we say $g$ is a central automorphism with centre $x$. The centre $x$ is a fixed point of $g$, and the axis $B$ is a fixed block of $g$.
Lemmas \ref{LemmaOneAxis} and \ref{LemmaAxisOutsideCentre} are important results on axial (and central) automorphisms.

\begin{lemma}\label{LemmaOneAxis}{\rm\cite[\S 2.3.15]{Dembowski}}
An automorphism of a symmetric design has at most one axis and one centre.
\end{lemma}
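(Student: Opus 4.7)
The plan is to suppose $g\in\Aut(\D)\setminus\{1\}$ admits two distinct axes $B_1\neq B_2$ and derive the contradiction $g=\mathrm{id}$. The corresponding statement for centres then follows by applying the same argument to the dual symmetric design $(\B,\P)$, under which axes and centres are interchanged.

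The main tool is the symmetric-design identity $|C\cap C'|=\lambda$ for any two distinct blocks. Because $g$ fixes $B_1$ and $B_2$ pointwise, every point of $B_1\cup B_2$ (a set of size $2k-\lambda$) is fixed by $g$. For any third block $C$, the intersections $C\cap B_1$ and $C\cap B_2$ contribute at least $2\lambda-1$ fixed points of $g$ on $C$, the $-1$ accounting for a possible common point of $B_1\cap B_2$ on $C$. Whenever this count exceeds $\lambda$, one has $|C\cap C^g|>\lambda$, which forces $C=C^g$; hence for $\lambda\geq 2$ every block of $\D$ is fixed by $g$.

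With every block fixed, $g=\mathrm{id}$ follows immediately: for any point $y$, both $y$ and $y^g$ are incident with the same $r$ blocks, but two distinct points lie on only $\lambda<r$ common blocks, forcing $y^g=y$. This contradicts $g\neq 1$ and settles the case $\lambda\geq 2$.

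The main obstacle is the projective-plane case $\lambda=1$, where the estimate $2\lambda-1=1$ is insufficient to conclude $C=C^g$ for blocks through the unique point $p=B_1\cap B_2$. Here I would argue geometrically: for any $z\notin B_1\cup B_2$ and any $a\in B_1\setminus\{p\}$, the line $L$ through $z$ and $a$ meets $B_2$ in a single further point, giving two fixed points on $L$ and hence $L=L^g$. Varying $a$ over the $k-1\geq 2$ choices in $B_1\setminus\{p\}$ produces at least two distinct fixed lines through $z$, whose unique intersection is $\{z\}$, so $z^g=z$. Thus $g=\mathrm{id}$ in this case as well, completing the proof.
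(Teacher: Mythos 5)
The paper offers no proof of this lemma (it is quoted from Dembowski), so the only question is whether your argument stands on its own. It does not for $\lambda\geq 3$: the bound $|C\cap(B_1\cup B_2)|\geq 2\lambda-1$ is false. By inclusion--exclusion, $|C\cap(B_1\cup B_2)|=2\lambda-|C\cap B_1\cap B_2|$, and since $|B_1\cap B_2|=\lambda$, a third block may contain far more than one point of $S:=B_1\cap B_2$ --- indeed, since exactly $\lambda$ blocks pass through any two points of $S$ and $B_1,B_2$ are two of them, up to $\lambda-2$ further blocks can contain \emph{all} of $S$. For such a block $C$ one has $|C\cap(B_1\cup B_2)|=\lambda$ exactly, so the inequality $|C\cap C^g|>\lambda$ cannot be drawn and $C=C^g$ does not follow. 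This is not a vacuous worry: in the $2$-$(15,7,3)$ design of points and planes of $\PG(3,2)$, three planes pass through every line, so a third block through $B_1\cap B_2$ genuinely exists and meets $B_1\cup B_2$ in only $\lambda=3$ points. Your argument is therefore complete only for $\lambda\in\{1,2\}$ (when $\lambda=2$ the only blocks containing $S$ are $B_1$ and $B_2$ themselves, so your count is valid there).

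The gap can be closed using the equality of fixed points and fixed blocks (Lemma \ref{LemmaSameFixed}). What your computation actually shows is that every non-fixed block must contain $S$; hence any two non-fixed blocks meet exactly in $S$, and if $C$ is non-fixed then $C\cap C^g=S$, so all $k-\lambda$ points of $C\setminus S$ are moved by $g$, and these sets are pairwise disjoint as $C$ ranges over the $m$ non-fixed blocks. Thus $g$ moves at least $m(k-\lambda)$ points, while by Lemma \ref{LemmaSameFixed} the number of moved points equals the number of moved blocks, namely $m$. Since $k\geq\lambda+2$ for a nontrivial symmetric design, $m(k-\lambda)\geq 2m$, forcing $m=0$; then your final step (every block fixed implies $g=1$, because $y\neq y^g$ would lie on $k>\lambda$ common blocks) applies. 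The $\lambda=1$ case and the passage from axes to centres by duality are correct as written.
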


\begin{lemma}\label{LemmaAxisOutsideCentre}{\rm\cite[\S 2.3.17]{Dembowski}}
Let $g$ be an axial automorphism of a symmetric design with axis $B$. If $g$ fixes a point $x\in\P\setminus B$, then $g$ is a central automorphism with centre $x$.
Dual statement holds if $g$ is central.
\end{lemma}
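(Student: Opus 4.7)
The plan is to exploit the defining property of symmetric designs that two distinct blocks meet in exactly $\lambda$ points. Given the axial automorphism $g$ with axis $B$ and the extra fixed point $x\notin B$, I aim to show that every block $C$ through $x$ is fixed by $g$, which is exactly the statement that $g$ is central with centre $x$.

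First, I would dispose of the trivial case $C=B$ (the axis is itself fixed setwise since $g$ fixes all its points, so $B^g=B$). For an arbitrary block $C$ through $x$ with $C\neq B$, I would invoke the property of symmetric designs recorded in the introduction: any two distinct blocks meet in exactly $\lambda$ points. Thus $|B\cap C|=\lambda$; call these points $y_1,\ldots,y_\lambda$. Since $g$ fixes every point of the axis $B$, it fixes each $y_i$, and by hypothesis it also fixes $x$. Because $x\notin B$ while each $y_i\in B$, the set $S:=\{x,y_1,\ldots,y_\lambda\}\subseteq C$ has exactly $\lambda+1$ elements, all fixed by $g$.

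Now consider the image block $C^g$. Since $g$ pointwise fixes $S$ and $S\subseteq C$, we have $S=S^g\subseteq C^g$, so $S\subseteq C\cap C^g$. Therefore $|C\cap C^g|\geq\lambda+1$. If $C^g\neq C$, this contradicts the fact that two distinct blocks of the symmetric design meet in exactly $\lambda$ points. Hence $C^g=C$, i.e.\ $g$ fixes $C$. As $C$ was an arbitrary block through $x$, the automorphism $g$ fixes every block through $x$, which by definition means $g$ is central with centre $x$.

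The dual statement, in which $g$ is assumed central with centre $y$ and fixes an additional block $C$ not containing $y$, follows by applying the same argument to the dual symmetric design (points and blocks exchanged), which is again a symmetric $2$-$(v,k,\lambda)$ design because two distinct points lie in exactly $\lambda$ common blocks. I do not foresee a genuine obstacle here: the proof is essentially a counting of fixed points on a single block, and the only subtle point is verifying that $x$ is genuinely distinct from the $y_i$'s, which is immediate from $x\notin B$.
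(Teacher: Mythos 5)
Your argument is correct and is essentially the standard proof of this classical fact (the paper itself only cites Dembowski \S 2.3.17 without reproducing a proof): the $\lambda$ points of $B\cap C$ together with $x$ give $\lambda+1$ fixed points on $C$, forcing $C^g=C$ since distinct blocks of a symmetric design meet in exactly $\lambda$ points. The only cosmetic remark is that the case $C=B$ never arises, since $x\notin B$ means $B$ is not a block through $x$; otherwise the proof, including the passage to the dual design, is complete.
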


The following is a classification of flag-transitive projective planes {\rm\cite[Theorem A]{KantorProplane}}.

\begin{lemma}\label{LemmaKantorProPlane}
If $G$ is a flag-transitive automorphism group of a projective plane $\D$, then one of the following holds:
\begin{enumerate}
\item[\rm(a)] $\D$ is Desarguesian and $\PSL_3(q)\unlhd G\leq \PGammaL_3(q)$.

\item[\rm(b)] $G$ is a Frobenius group of odd order $(n^2+n+1)(n+1)$, and
$n^2+n+1$ is prime.
\end{enumerate}
\end{lemma}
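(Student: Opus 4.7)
The plan is to prove the classification by first establishing point-primitivity, then invoking the O'Nan-Scott theorem together with the classification of finite simple groups, and finally analyzing each of the resulting cases with some care paid to the rigid arithmetic of projective plane parameters.

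\textbf{Step 1: Point-primitivity.} A projective plane of order $n$ has parameters $v=n^2+n+1$, $k=n+1$, $\lambda=1$, and so $r=n+1$. Since $\gcd(r,\lambda)=\gcd(n+1,1)=1$, the classical Higman--McLaughlin theorem guarantees that any flag-transitive automorphism group of $\D$ is primitive on $\mathcal{P}$. Hence $G$ is a primitive permutation group of degree $n^2+n+1$ with a point stabilizer $G_x$ of index $n^2+n+1$ that acts transitively on the $n+1$ lines through $x$.

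\textbf{Step 2: Reducing the O'Nan--Scott type.} Applying Lemma~\ref{LemmaONan} to $G$, I would eliminate the simple diagonal, product action, and twisted wreath types. In each of these, the socle of $G$ has a large nonabelian composition structure and the degree factors nontrivially as a product or is forced to be a specific power, both of which are incompatible with the number-theoretic shape $v=n^2+n+1$ together with the existence of a subgroup of index $v$ transitive on a set of size $n+1$. Thus $G$ is either of affine type or of almost simple type.

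\textbf{Step 3: The affine case.} Suppose ${\rm soc}(G)=Z_p^d$ and $v=p^d$ for some prime $p$. Then $G=Z_p^d{:}G_x$ with $|G_x|$ divisible by $n+1$. Using $\gcd(v,n+1)=\gcd(n^2+n+1,n+1)=\gcd(1,n+1)=1$, every nonidentity element of $G_x$ acts fixed-point-freely on the nontrivial vectors: any fixed vector would produce a line fixed by a nontrivial element, and a short orbit argument combined with $\lambda=1$ rules this out. Hence $G$ is a Frobenius group with kernel $Z_p^d$ and complement $G_x$. A Frobenius complement acting transitively on the $n+1$ lines through $x$ must have order exactly $n+1$, giving $|G|=(n^2+n+1)(n+1)$. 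Since Frobenius kernels of $p$-power order force the Sylow structure of $v$, and a primitive affine group of even degree would provide involutions with too many fixed points for $\lambda=1$, both $|G|$ and the complement are of odd order and the kernel $Z_p^d$ must be a cyclic group of prime order; that is, $v=n^2+n+1$ is prime. This yields conclusion~(b).

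\textbf{Step 4: The almost simple case.} Here $T={\rm soc}(G)$ is nonabelian simple with $|T|$ divisible by $v=n^2+n+1$, and $T$ has a subgroup $T_x=T\cap G_x$ of index dividing $v$ that meets the transitivity condition on lines. I would run through the CFSG list: alternating, classical, exceptional Lie type, and sporadic simple groups. Arithmetic on $v=n^2+n+1$ and $k=n+1$ (in particular, $v$ is coprime to $k$ and $v\equiv 1\pmod{n}$) eliminates most candidates. The surviving family is $T=\PSL_3(q)$, acting naturally on the $q^2+q+1$ points of $\PG(2,q)$, so $n=q$, the plane is Desarguesian, and $\PSL_3(q)\unlhd G\leq\PGammaL_3(q)$ as in conclusion~(a). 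This step is by far the main obstacle: while the arithmetic is quickly restrictive, a complete verification requires the detailed maximal-subgroup information for each family of simple groups together with Ostrom--Wagner-type results to identify the underlying plane as Desarguesian once a subgroup of $\PGammaL_3(q)$ action is detected.
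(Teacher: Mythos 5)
This lemma is not proved in the paper at all: it is quoted verbatim as Kantor's Theorem~A from \cite{KantorProplane}, a deep classification theorem whose proof occupies a substantial part of that paper and rests on Kantor's CFSG-based classification of primitive permutation groups of odd degree. Your proposal is therefore an attempt to reprove Kantor's theorem from scratch, and as written it is an outline with genuine gaps rather than a proof. Step~1 (point-primitivity via Higman--McLaughlin, since $\gcd(r,\lambda)=1$) is fine. But Step~2 already fails as stated: you dismiss the product action type on the grounds that $v=n^2+n+1$ cannot factor as a product or be a prescribed power, yet $18^2+18+1=343=7^3$, so the degree can perfectly well be a nontrivial power; eliminating these O'Nan--Scott types genuinely requires the odd-degree machinery (note $n^2+n+1$ is always odd), not elementary arithmetic on the shape of $v$.

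Step~3 asserts, without proof, the three hardest points of the affine case: that $G_x$ acts fixed-point-freely on $\P\setminus\{x\}$ (an element of $G_x$ fixing a second point $y$ merely fixes the line $xy$, which is not an immediate contradiction --- the standard route is to first obtain $|G|=(n^2+n+1)(n+1)$ from the Higman--McLaughlin counting in the non-2-transitive case and then deduce semiregularity from the regularity of $G_x$ on the $n+1$ lines through $x$); that $|G_x|$ equals $n+1$ exactly; and that $d=1$, i.e.\ that $n^2+n+1$ is prime, which is one of the substantive conclusions of the theorem and does not follow from the remark that ``Frobenius kernels of $p$-power order force the Sylow structure of $v$.'' Step~4 you concede is incomplete, and it is the bulk of the theorem: running through every family of simple socles with their maximal subgroups and then invoking Ostrom--Wagner is precisely the content of Kantor's paper. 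In the context of the present paper the correct move is simply to cite \cite[Theorem A]{KantorProplane}, as the authors do.
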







The following is the classification of flag-transitive symmetric designs with $\lambda=2$ (also called biplanes){\rm(\cite{RegueiroReduction}-\cite{RegueiroSurvey})}, which are useful in the present paper.

\begin{lemma}\label{LemmaLambda=2}
If $\D$ is a biplane admitting a flag-transitive automorphism group $G$, then one of the following holds:
\begin{enumerate}
\item[\rm(a)] $\D$ is the trivial $2$-$(4,3,2)$ design.

\item[\rm(b)] $\D$ is the unique $2$-$(7,4,2)$ design with $\Aut(\D)=\PSL_2(7)$.

\item[\rm(c)] $\D$ is the unique $2$-$(11,5,2)$ design with $\Aut(\D)=\PSL_2(11)$.

\item[\rm(d)] $\D$ is one of two $2$-$(16,6,2)$ designs with $\Aut(\D)=2^4S_6$ and  $(\ZZ_2\times \ZZ_8)(S_4.2)$, respectively, and $G$ is either point-imprimitive or point-primitive of affine type.

\item[\rm(e)] $G\leq \AGammaL_1(p^d)$, for some odd prime $p$ and $|G|$ is odd.
\end{enumerate}
Further, if $G$ is point-imprimitive, then $(v,k,\lambda)=(16,6,2)$, as stated in  {\rm(d)}.
\end{lemma}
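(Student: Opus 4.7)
The plan is to follow the standard dichotomy by splitting into point-imprimitive and point-primitive cases, and in the latter to invoke the O'Nan-Scott theorem (Lemma \ref{LemmaONan}); this mirrors the series of papers by O'Reilly-Regueiro summarized in \cite{RegueiroSurvey}.

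First I would dispose of the point-imprimitive case. By Lemma \ref{LemmaFlagtrProperties}, flag-transitivity implies block-transitivity, so a non-trivial system of imprimitivity on $\P$ imposes strong divisibility restrictions between $k$ and the class sizes, in the style of Praeger-Zhou. Combined with the biplane relation $k(k-1)=2(v-1)$ from Lemma \ref{LemmaParameters}\,(a$'$) and the inequality $\lambda v<k^2$ from (d$'$), these restrictions pin down $(v,k)=(16,6)$. A direct enumeration then identifies exactly two biplanes with these parameters, with full automorphism groups $2^4S_6$ and $(Z_2\times Z_8)(S_4.2)$; this gives case (d) and the final clause of the statement.

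For point-primitive $G$, I would apply the O'Nan-Scott theorem. The twisted wreath, simple diagonal, and product action types all involve a socle of the form $T^\ell$ with $\ell\geq 2$ (or a non-abelian simple factor in $G_x$ for the diagonal and twisted wreath cases). Combining the Cameron-type bound $|G|<|G_x|^3$ (Lemma \ref{LemmaCubic}) with $\lambda v<k^2$ and $k(k-1)=2(v-1)$ makes the socle too large relative to $v$, ruling these three types out. In the remaining affine case $G=Z_p^d{:}G_0$ with $v=p^d$, the Diophantine equation $2p^d=k^2-k+2$ admits only small solutions: if $|G|$ is even one gets $v=4$ (the trivial design of case (a)) and $v=16$ (the affine biplanes of case (d)); if $|G|$ is odd, then $p$ is odd and $G_0\leq\GL_d(p)$ has odd order, which by standard structure theory of odd-order subgroups of $\GL_d(p)$ forces $G\leq\AGammaL_1(p^d)$, giving case (e).

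The main obstacle is the almost simple case. Here one traverses the CFSG families, using Lemma \ref{LemmaCubic} together with the biplane bound $v<k^2/2$ to severely constrain the socle $T$ relative to the order of a maximal subgroup playing the role of $G_x$. Alternating, sporadic, and exceptional groups of Lie type can be eliminated by direct inspection of Atlas data and known maximal subgroup orders. The substantive work is with classical groups: for $\PSL_2(q)$ one combines the explicit list of maximal subgroups with the biplane Diophantine constraint to recover exactly $v=7$ and $v=11$, giving cases (b) and (c); higher-rank classical families are then excluded by comparing index bounds against $k(k-1)=2(v-1)$. The technical crux — which is the content of the Regueiro series being invoked — is controlling the intersection $G_x\cap T$ closely enough to rule out borderline numerical coincidences, and this is where I expect the proof to require the greatest care.
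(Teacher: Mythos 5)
The paper offers no proof of this lemma: it is quoted as a known result from the O'Reilly-Regueiro classification of flag-transitive biplanes (\cite{RegueiroReduction}--\cite{RegueiroSurvey}, with \cite{Symrm=1Affine} handling the one-dimensional affine case), so there is no internal argument to measure your proposal against. Your outline does faithfully reproduce the architecture of that classification --- imprimitive reduction to $(16,6,2)$, then O'Nan-Scott, elimination of the diagonal, product and twisted wreath types, and a CFSG traversal for the almost simple socles --- and you correctly identify that the almost simple case is the bulk of the work (it occupies three separate papers, \cite{RegueiroSporadic}, \cite{RegueiroClassical}, \cite{RegueiroExceptionalLie}).

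Read as a self-contained proof, however, your affine step is wrong as stated. The Diophantine equation $k(k-1)=2(p^d-1)$ does not ``admit only small solutions'': $(v,k)=(37,9)$, $(79,13)$ and $(121,16)$ all satisfy it with $v$ a prime power, and biplanes with parameters $(37,9,2)$ genuinely exist. Eliminating these candidates requires the analysis of the irreducible subgroup $G_0\leq\GL_d(p)$ (its normal subgroup structure, the modules it can act on, and fixed-point counts for involutions) carried out in \cite{RegueiroReduction} and \cite{Symrm=1Affine}, not arithmetic alone. Likewise, it is not true that an odd-order irreducible subgroup of $\GL_d(p)$ must lie in $\GammaL_1(p^d)$ --- odd-order imprimitive linear groups and normalizers of extraspecial groups provide counterexamples --- so the containment $G\leq\AGammaL_1(p^d)$ in case (e) is again a design-theoretic conclusion of \cite{Symrm=1Affine} rather than ``standard structure theory''. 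In short, the two places where you assert that arithmetic or general structure theory closes the case are exactly where the cited papers do their real work; for the purposes of the present paper the honest course is simply to cite the classification, as the authors do.
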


The following lemma is obtained directly using {\sc Magma} \cite{magma}, according to Lemma \ref{LemmaLambda=2}.

\begin{lemma}\label{LemmaRegueiroLambda=2}
Only one $2$-$(16,6,2)$ design $\D$ admits a flag-regular automorphism group.
This design has full automorphism group $\Aut(\D)=\ZZ_2^4{:}\SS_6$, and has exactly five non-isomorphic flag-regular, point-imprimitive automorphism groups. There are two groups $\ZZ_2^2{:}\SS_4$ and $(\ZZ_4^2{:}\ZZ_3){:}\ZZ_2$ admitting stabilizers $G_x\cong G_B\cong \DD_6$, and three groups $(\ZZ_2^4{:}\ZZ_2){:}\ZZ_3$, $\ZZ_2^3.\AA_4$ and $(\ZZ_4{:}\DD_8){:}\ZZ_3$ admitting stabilizers $G_x\cong G_B\cong \ZZ_6$.
\end{lemma}

For a given permutation group $G$ on a set $\P$ of degree $v$ and a parameter set $(v, b, r, k,\lambda)$, we may apply the following procedure to verify if there exists a 2-design with these parameters that admits $G$ as a flag-transitive (hence locally transitive) automorphism group.

Step 1. Find conjugacy classes of subgroups of index $b$ in $G$.

Step 2. Take a representative $H$ from such a conjugacy class as a candidate for a block stabilizer.

Step 3. Find a point-orbit $O$ of $H$ of length $k$, as a candidate for a block.

Step 4. Verify if the cardinality of $O^G$ equals $b$.

Step 5. Check whether the incidence structure $(\P,O^G)$ is a 2-design.

This procedure can be implemented using {\sc Magma}, and will be applied in the proof of the main theorems for some sporadic cases.




\medskip

\section{Local actions of automorphism groups for symmetric designs}\label{SectionLocalAction}

In this section we obtain some results concerning the local actions of automorphism groups on symmetric designs, which will be useful in the proof of the main theorems.

We first present the following Lemma \ref{LemmaAffineSoluble}, which treats solvable affine groups.
The idea of the proof is due to Dempwolff \cite{DempwolffSymaffine3} and Kantor \cite{Kantor2tr}. For the completeness, a proof is given here.

\begin{lemma}\label{LemmaAffineSoluble}
Let $\D$ be a symmetric design with $G \leqslant \Aut(\D)$.
If $G$ is point-primitive and solvable, then for any $B\in\B$ there exists $x\in\P\setminus B$ such that $G_B=G_x$.
\end{lemma}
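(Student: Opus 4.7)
The plan is to exploit the affine structure forced by point-primitivity and solvability, then to locate a $G_B$-fixed point on $\P$ via a cocycle/centroid computation.

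First, by a classical consequence of the O'Nan--Scott theorem and Huppert's theorem on primitive solvable groups, $G$ is affine: it has a unique minimal normal subgroup $V$ which is elementary abelian of order $p^d = v$, acts regularly on $\P$, and $G = V \rtimes G_0$ with $G_0 \leq \GL(V)$ irreducible. For any block $B$, I would show $V \cap G_B = 1$: any non-identity $w \in V$ acts fixed-point-freely on $\P$ by $V$-regularity, so Lemma~\ref{LemmaSameFixed} forces $w$ to fix no block, contradicting $w \in G_B$. Since $b = v$, orbit--stabilizer combined with the injection $G_B \hookrightarrow G/V$ forces $|G_B| = |G_0|$, so $G$ is block-transitive and $G_B$ is a complement to $V$ in $G$.

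The core step is to produce a point $x \in \P$ fixed by $G_B$; then $G_B = G_x$ follows at once from order equality. Identifying $\P$ and $\B$ with $V$ via the two regular $V$-actions (base point $x_0 = 0$ with $G_{x_0} = G_0$, base block $B$ corresponding to $0 \in V$), the block $B$ is encoded by a $(v, k, \lambda)$-difference subset $D \subseteq V$, and the identities $hD = c(h) + D$ for $h \in G_0$ define a $1$-cocycle $c \colon G_0 \to V$. A $G_B$-fixed point corresponds exactly to a trivialization $c = \partial u$ for some $u \in V$, i.e., $[c] = 0$ in $H^1(G_0, V)$. When $p \nmid k$, the centroid $u = k^{-1} \sum_{d \in D} d$ does the job: a short averaging shows $hu - u = c(h)$. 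The main difficulty, which I expect to be the central obstacle, is the case $p \mid k$: here the centroid is unavailable, and one must argue more subtly, following Kantor and Dempwolff by exploiting the difference-set equation $D \cdot D^{(-1)} = k + \lambda (V - 1)$ in the integral group ring together with the permutation character equality between $\P$ and $\B$ coming from Lemma~\ref{LemmaSameFixed}.

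Finally, to see $x \notin B$: the point $x$ corresponds to the vector $u \in V$ making $B' = D - u$ a $G_0$-invariant block, so $x \in B$ is equivalent to $0 \in B'$. Since $V^{G_0} = 0$, the zero vector is the unique $G_0$-fixed point of $\P$; by a parallel self-normalizing argument (noting $N_V(G_0) = V^{G_0} = 0$), $B'$ is the unique $G_0$-fixed block. If $0 \in B'$, then $(0, B')$ is a $G_0$-fixed flag whose $G$-orbit has size exactly $v$ and consists of the flags $(w, w + B')$ for $w \in V$; comparing this with the total count $vk$ of flags and using the parameter identity $\lambda(v-1) = k(k-1)$ from Lemma~\ref{LemmaParameters} together with $k > 1$ yields the required contradiction.
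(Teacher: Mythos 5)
Your setup is sound and agrees with the paper up to the point where the real work begins: the reduction to the affine case, the observation that $V\cap G_B=1$ via Lemma~\ref{LemmaSameFixed}, and the count $|G_B|=|G_0|$ are all correct, and the centroid computation does trivialize the cocycle when $p\nmid k$. The problem is that the case $p\mid k$, which you explicitly leave open, is not a fringe case but the heart of the matter, and it genuinely occurs in the situations where this lemma is applied: the point-primitive $2$-$(16,6,2)$ biplanes of Lemma~\ref{LemmaLambda=2}(d) have $v=2^4$ and $k=6$, so $p=2\mid k$ and $k^{-1}\sum_{d\in D}d$ does not exist. Saying one must "argue more subtly, following Kantor and Dempwolff" is a pointer, not a proof; what has to be shown is that the specific class $[c]\in H^1(G_0,V)$ vanishes, and the difference-set identity $D\cdot D^{(-1)}=k+\lambda(V-1)$ together with the equality of permutation characters does not obviously deliver this. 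The paper sidesteps the dichotomy entirely with a uniform group-theoretic argument: since $G_0$ is irreducible and solvable, $O_p(G_0)=1$, so the Fitting subgroup $F(G_0)$ is a nontrivial $p'$-group with $C_V(F(G_0))=1$ and hence $N_G(F(G_0))=G_0$; putting $L=VF(G_0)\lhd G$, the intersection $G_B\cap L$ is a complement of the normal Sylow $p$-subgroup $V$ in $L$, hence conjugate to $F(G_0)$ by Schur--Zassenhaus, and since $G_B$ normalizes $G_B\cap L$ it lies in (hence equals, by order) a conjugate of $G_0$, i.e.\ some $G_x$. That argument is insensitive to whether $p$ divides $k$, which is exactly what your route lacks.

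A secondary flaw: your verification that $x\notin B$ compares the size $v$ of the $G$-orbit of the flag $(0,B')$ with the total number $vk$ of flags and claims a contradiction, but an orbit of size $v$ sitting inside a set of $vk$ flags contradicts nothing unless $G$ is flag-transitive, which is not among the hypotheses of this lemma. (In the paper's applications flag-transitivity is available, and there $x\in B$ together with $G_x=G_B$ would force $r=1$ immediately; but as written your step does not follow from the stated assumptions.)
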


\begin{proof}
According to the O'Nan-Scott theorem (Lemma \ref{LemmaONan}), only primitive groups of affine type are solvable.
In this case, $\P$ can be identified with $V=\ZZ_p^d$, $G$ is a subgroup of the affine group ${\rm AGL}_d(p)$
with $V$ the translation group and $G_x=G\cap \GL_d(p)$ irreducible on $V$.
As $G$ is solvable, $G_x$ is also solvable and has a non-trivial Fitting subgroup $F(G_x)$.
Since $G_x$ is irreducible on $V$, $O_p(G_x)=1$. This implies that $F(G_x)$ is a $p'$-group.
Note that $V=C_V(F(G_x))\times[F(G_x),V]$ is a $G_x$-decomposition.
From $[F(G_x),V]>1$ we have $[F(G_x),V]=V$ and thus $C_V(F(G_x))=1$.
Now $[N_V(F(G_x)), F(G_x)]\leq F(G_x)\cap V=1$ and we have $N_G(F(G_x))\cap V=N_V(F(G_x))=C_V(F(G_x))=1$.
Thus $N_G(F(G_x))=G_x$.

Let $L=VF(G_x)$. As $G_B$ is also a complement of $V$ in $G$, $G=VG_B$.
Let $L_0=G_B\cap L$ and then $L=L\cap G=VL_0$. We also have $L_0$ is a $p'$-group.
By the Schur-Zassenhauss theorem, replace $L_0$ by a suitable conjugate we have $L_0=F(G_x)$.
Therefore $G_B=N_G(L_0)=N_G(F(G_x))=G_x$ and we are done.
\end{proof}

\begin{lemma}\label{LemmaG^Bfaithful}
Let $G$ be a block-transitive automorphism group of a symmetric design $\D$ with $\lambda>1$.
If each non-identity element of $G_B^B$ fixes at most $\lambda$ points on $B$, then either $G_B$ acts faithfully on $B$, or ${\rm Fix}_{\P}(g)=B$ for each $g\in G_{(B)}$.
If each non-identity element of $G_B^B$ fixes at most $\lambda-1$ points on $B$, then $G_B$ acts faithfully on $B$.
\end{lemma}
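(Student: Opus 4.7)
The plan is to take any non-identity element $g$ of the kernel $G_{(B)}$ of $G_B\to\Sym(B)$ and argue that $g$ can fix very few points outside $B$. By definition $g$ fixes every point of $B$, so $B$ is an axis of $g$, and by Lemma~\ref{LemmaOneAxis} the only axis of $g$. Block-transitivity of $G$ implies that for every block $B'\in\B$ the permutation group $G_{B'}^{B'}$ is conjugate to $G_B^B$, so the hypothesis on fixed points of non-identity elements transfers from $G_B^B$ to every $G_{B'}^{B'}$.

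The main step is the following observation: if $B'\ne B$ is any block fixed setwise by $g$, then $\mathrm{Fix}_\P(g)\cap B'=B\cap B'$. Indeed $g$ cannot act trivially on $B'$, since otherwise $B'$ would be a second axis of $g$ contradicting Lemma~\ref{LemmaOneAxis}; hence the hypothesis bounds the fixed points of $g$ on $B'$ by $\lambda$, and since $\D$ is symmetric the $\lambda$ points of $B\cap B'$ are already fixed, so these exhaust the fixed points of $g$ on $B'$.

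For the first assertion, assuming $G_B$ does not act faithfully, take any non-identity $g\in G_{(B)}$ and suppose $g$ fixes some point $y\in\P\setminus B$. By Lemma~\ref{LemmaAxisOutsideCentre}, $g$ is central with centre $y$, so $g$ fixes each of the $r=k$ blocks through $y$. Every such block $B'$ differs from $B$ (as $y\notin B$), so the key observation forces $y\in B\cap B'\subseteq B$, a contradiction. Hence $\mathrm{Fix}_\P(g)\subseteq B$, and combined with $B\subseteq\mathrm{Fix}_\P(g)$ this gives $\mathrm{Fix}_\P(g)=B$, yielding the dichotomy.

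For the second assertion the bound improves to $\lambda-1$, which strengthens the key observation significantly: $g$ can fix no block other than $B$, because on any $B'\ne B$ fixed by $g$ the argument would now demand at most $\lambda-1$ fixed points while $|B\cap B'|=\lambda$. Hence $|\mathrm{Fix}_\B(g)|=1$, whereas $|\mathrm{Fix}_\P(g)|\ge|B|=k\ge 3$, contradicting Lemma~\ref{LemmaSameFixed}. Consequently $G_{(B)}=1$ and $G_B$ acts faithfully on $B$. I do not expect any real obstacle; the subtlest points are simply remembering to transfer the hypothesis from $B$ to every other block via block-transitivity, and invoking Lemma~\ref{LemmaOneAxis} in its uniqueness form at the precise moment needed to force non-trivial induced actions on other fixed blocks.
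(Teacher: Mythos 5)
Your proof is correct and follows essentially the same route as the paper: both arguments hinge on the fact that a non-identity $g\in G_{(B)}$ has $B$ as its unique axis (Lemma~\ref{LemmaOneAxis}), that any other block $B'$ fixed by $g$ already contains $\lambda$ fixed points in $B\cap B'$, and that Lemma~\ref{LemmaAxisOutsideCentre} converts a fixed point off $B$ into a centre; your "key observation" is just the paper's contradiction step stated as an equality, and in the second part you invoke Lemma~\ref{LemmaSameFixed} at the end rather than at the start, which is an immaterial rearrangement.
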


\begin{proof}
Assume that $G_B$ acts unfaithfully on $B$. Equivalently, $G_{(B)}>1$. Let $1 \ne g\in G_{(B)}$.
Now $g$ is an axial automorphism with axis $B$.
If each non-identity element of $G_B^B$ fixes at most $\lambda$ points, we show that ${\rm Fix}_{\P}(g)=B$.
Assume that ${\rm Fix}_{\P}(g) \ne B$. Then $g$ fixes a point $x\in\P\setminus B$.
By Lemma \ref{LemmaAxisOutsideCentre}, $x$ is a centre of $g$.
Let $B_1$ be a block through $x$, which is fixed by $g$.
Then $|B\cap B_1|=\lambda$ and so each point of $B\cap B_1$ is fixed by $g$.
This follows that $g$ fixes at least $\lambda+1$ points on its fixed block $B_1$.
By the assumption and the block-transitivity, each non-identity element of $G_{B_1}^{B_1}$ fixes at most $\lambda$ points on $B_1$, so $g$ fixes every point of $B_1$.
Hence, $B_1$ is also an axis of $g$. This contradicts Lemma \ref{LemmaOneAxis}.

Now consider the case that each non-identity element of $G_B^B$ fixes at most $\lambda-1$ points.
If $G_B$ is not faithful on $B$, then let $g\in G_{(B)}>1$.
By Lemma \ref{LemmaSameFixed}, $g$ has at least $k$ fixed points and so has at least $k$ fixed blocks.
Let $B_2$ be a fixed block other than $B$. Now $B\cap B_2$ contains $\lambda$ fixed points of $g$. This implies that $B_2$ is also an axis of $g$ and we get the same contradiction.
\end{proof}

\begin{lemma}\label{LemmaG^Batmost2}
Let $G$ be a flag-transitive automorphism group of a symmetric design $\D$ with $\lambda>1$. If each non-identity element of $G_B^B$ fixes at most two points on $B$, then $G_B$ acts faithfully on $B$.
\end{lemma}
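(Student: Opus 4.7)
My plan is to reduce the statement to Lemma \ref{LemmaG^Bfaithful} and handle a residual edge case by a fixed-block count. Observe that the hypothesis ``each non-identity element of $G_B^B$ fixes at most two points on $B$'' is at most $\lambda-1$ when $\lambda\geq 3$, in which case Lemma \ref{LemmaG^Bfaithful} immediately yields that $G_B$ acts faithfully on $B$. So the only work is in the case $\lambda=2$, where the hypothesis reads ``at most $\lambda$'' and Lemma \ref{LemmaG^Bfaithful} leaves open the possibility that $G_B$ is unfaithful on $B$ with $\mathrm{Fix}_\P(g)=B$ for every $1\neq g\in G_{(B)}$. I want to rule this out.

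So assume $\lambda=2$ and, for contradiction, take $1\neq g\in G_{(B)}$ with $\mathrm{Fix}_\P(g)=B$. Then $|\mathrm{Fix}_\P(g)|=k$, and Lemma \ref{LemmaSameFixed} gives that $g$ fixes exactly $k$ blocks, one of which is $B$. The key step is to double-count the fixed blocks of $g$ different from $B$ using the pairs of points in $B$. For every $2$-subset $\{y,z\}\subseteq B$, there are exactly $\lambda=2$ blocks through $\{y,z\}$; one is $B$ and the other, say $B_{\{y,z\}}$, must also be $g$-invariant since $g$ fixes $y$ and $z$ and permutes the two blocks through them. Moreover, two distinct pairs $\{y,z\}\neq \{y',z'\}$ force $B_{\{y,z\}}\neq B_{\{y',z'\}}$: otherwise their common block would meet $B$ in at least three points, violating $|B\cap B'|=\lambda=2$. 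Hence the map $\{y,z\}\mapsto B_{\{y,z\}}$ is injective, yielding $\binom{k}{2}$ distinct $g$-fixed blocks different from $B$.

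Combining with the count $k$ of fixed blocks, we obtain $\binom{k}{2}+1\leq k$, i.e.\ $k\leq 2$, contradicting $k>2$. This finishes the $\lambda=2$ case, and thus the lemma.

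I don't expect a serious obstacle: the argument for $\lambda\geq 3$ is a direct quotation of Lemma \ref{LemmaG^Bfaithful}, and the counting in the $\lambda=2$ case is elementary, the only subtle point being the injectivity of $\{y,z\}\mapsto B_{\{y,z\}}$, which is forced precisely by the symmetric-design intersection identity $|B\cap B'|=\lambda$.
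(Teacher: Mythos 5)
Your proposal is correct, and for the only nontrivial case ($\lambda=2$) it takes a genuinely different route from the paper. The paper also disposes of $\lambda\geq 3$ by quoting Lemma \ref{LemmaG^Bfaithful}, but for $\lambda=2$ it invokes the classification of flag-transitive biplanes (Lemma \ref{LemmaLambda=2}): it reduces to a solvable affine group (handled via Lemma \ref{LemmaAffineSoluble} and the faithfulness of point stabilizers on their orbits) or to the four parameter sets $(4,3,2)$, $(7,4,2)$, $(11,5,2)$, $(16,6,2)$, whose full automorphism groups are then checked directly. Your argument instead stays elementary: from the unfaithful branch of Lemma \ref{LemmaG^Bfaithful} you get ${\rm Fix}_{\P}(g)=B$, hence exactly $k$ fixed blocks by Lemma \ref{LemmaSameFixed}, while each of the $\binom{k}{2}$ pairs in $B$ contributes a distinct second fixed block (distinctness forced by $|B\cap B'|=\lambda=2$), giving $\binom{k}{2}+1\leq k$ and the contradiction $k\leq 2$. (Equivalently, since $v=\binom{k}{2}+1$ for a biplane, $g$ would fix every block.) Your route avoids the heavy classification machinery entirely and in fact only needs block-transitivity rather than flag-transitivity, so it proves a marginally stronger statement; the paper's route costs nothing extra in context since Lemma \ref{LemmaLambda=2} is already a standing tool there. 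Both are valid.
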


\begin{proof}
If $\lambda\geq3$, then the result follows from Lemma \ref{LemmaG^Bfaithful}.
If $\lambda=2$, then $\D$ is a biplane.
By the classification of flag-transitive biplanes (Lemma \ref{LemmaLambda=2}), either $G$ is a solvable affine group or $\D$ has parameters $(v,k,\lambda)=(4,3,2)$, $(7,4,2)$, $(11,5,2)$ or $(16,6,2)$.
For the first case, by Lemma \ref{LemmaAffineSoluble} we conclude that there exists $x\in\P$ such that $G_x=G_B$, where $B$ is a non-trivial orbit of a point stabilizer $G_x$.
Since $G$ has a normal regular subgroup, it follows from \cite[Exercise 4.4.8]{Dixon} that $G_x$ acts faithfully on each of its non-trivial orbits.
For the latter four cases, check their full automorphism groups listed in Lemma \ref{LemmaLambda=2} and we find that both local actions are faithful.
\end{proof}

\begin{lemma}\label{Lemma|H|lambda}
	Let $G$ be a block-transitive automorphism group of a symmetric design $\D$ with $\lambda>1$.
	If each non-identity element of $G_B^B$ fixes at most one point on $B$, and the subgroup $H$ of $G$ fixes at least two points on $\P$, then $|H|$ divides $\lambda$.
\end{lemma}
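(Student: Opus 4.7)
The plan is to realize $H$ as a permutation group on a set of cardinality $\lambda$ and to show this action is semiregular; then orbit--stabilizer immediately yields $|H|\mid\lambda$. Fix two distinct points $x,y\in\mathrm{Fix}_{\P}(H)$, which exist by hypothesis. Since $\D$ is symmetric, any two points lie in exactly $\lambda$ common blocks, so $\D(x)\cap\D(y)$ has size $\lambda$ and is preserved by $H$. It therefore suffices to show that no non-identity element of $H$ stabilizes any block in this set.

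Suppose for contradiction that $1\ne h\in H$ fixes some $B\in\D(x)\cap\D(y)$. Then $h\in G_B$ fixes the two distinct points $x,y\in B$, so the hypothesis on $G_B^B$ forces $h$ to act trivially on $B$; equivalently, $B$ is an axis of $h$, and by Lemma~\ref{LemmaOneAxis} it is the unique one. On the other hand, $h$ fixes all $k\ge 3$ points of $B$, so by Lemma~\ref{LemmaSameFixed} it fixes at least $k\ge 2$ blocks; pick a fixed block $B'\ne B$. Every point of $B\cap B'$ lies on the axis $B$ and is therefore fixed by $h$, and since $\D$ is symmetric $|B\cap B'|=\lambda\ge 2$. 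Applying the hypothesis now to $G_{B'}^{B'}$ forces $h$ to act trivially on $B'$ as well, making $B'$ a second axis and contradicting Lemma~\ref{LemmaOneAxis}.

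The main (mild) obstacle is engineering the second axis: one must combine Lemma~\ref{LemmaSameFixed} (to guarantee the existence of a second fixed block $B'$) with the symmetric-design identity $|B\cap B'|=\lambda$ (to reactivate the hypothesis on $G_{B'}^{B'}$). Once this is in place, the action of $H$ on $\D(x)\cap\D(y)$ is semiregular, and so $|H|$ divides $|\D(x)\cap\D(y)|=\lambda$ by orbit--stabilizer.
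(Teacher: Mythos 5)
Your proof is correct and follows essentially the same route as the paper: view $H$ acting on the $\lambda$ blocks through the two fixed points, and rule out a non-identity stabilizer by manufacturing a second axis, contradicting Lemma~\ref{LemmaOneAxis}. The only difference is that the paper outsources that second-axis argument to Lemma~\ref{LemmaG^Bfaithful} (faithfulness of $G_B$ on $B$), whereas you reprove it inline via Lemmas~\ref{LemmaSameFixed} and~\ref{LemmaOneAxis}.
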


\begin{proof}
	
	Suppose $H$ is a subgroup of $G$ and fixes two points, say $x$ and $y$.
	Then $H$ fixes the set $S$ of $\lambda$ blocks containing $x$ and $y$.
	If there exists $1\neq h\in H$ such that $h$ fixes some block of these, say $B$, then $h\in H_B\leq G_B$ and $h$ fixes more than one point on $B$.
	By the assumption, the image of $h$ under the homomorphism from $G_B$ to $G_B^B$ is the identity.
	This means $h\in G_{(B)}$.
	Since each non-identity element of $G_B^B$ fixes at most one  point on $B$ and $1\leqslant \lambda -1$, by Lemma \ref{LemmaG^Bfaithful}, $G_B$ acts faithfully on $B$. Hence $h\in G_{(B)}=1$, a contradiction.
	It follows that each element of $H$ fixes no element of $S$, and so $H$ acts semi-regularly on $S$.
	Thus $|H|$ divides $|S|=\lambda$.
\end{proof}

We denote by $\lfloor a\rfloor$ the largest integer but not larger than $a$.
In the following we give a result concerning the number of fixed points of automorphism of order greater than or equal to $\lambda$.

\begin{lemma}\label{LemmaFixedPointsCore}
Let $G$ be a block-transitive automorphism group of a symmetric design $\D$ with $\lambda>1$. If $G_B^B$ is semi-regular on $B$, and $g\in G$ with $o(g)\geq\lambda$, then one of the following holds:
\begin{enumerate}
\item[\rm (a)] $|{\rm Fix}_{\P}(g)|\leq min\{\lfloor\frac{k}{\lambda}\rfloor \lfloor\frac{k-1}{\lambda}\rfloor+1,\,\lfloor\frac{k}{\lambda}\rfloor(\lambda-1)+1\}$, if $o(g)=\lambda$.

\item[\rm (b)] $|{\rm Fix}_{\P}(g)|\leq 1$, if $o(g)>\lambda$.
\end{enumerate}
\end{lemma}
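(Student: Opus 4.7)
The plan begins with Lemma~\ref{LemmaG^Bfaithful}: since $G_B^B$ is semi-regular, every non-identity element fixes $0\leq\lambda-1$ points of $B$, so $G_B$ acts faithfully on $B$; consequently, any element of $G$ that stabilises a block and fixes a point of that block must be trivial. The pivotal consequence is this: if $g\in G$ fixes two distinct points $x,y\in\P$, then $\langle g\rangle$ permutes the $\lambda$ blocks through $\{x,y\}$, and any $g^i$ stabilising one such block $B$ fixes $x\in B$ and is therefore trivial, so $\langle g\rangle$ acts semi-regularly on these $\lambda$ blocks, forcing $o(g)\mid\lambda$. This at once delivers Part~(b), because $o(g)>\lambda$ rules out $|{\rm Fix}_{\P}(g)|\geq 2$.

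For Part~(a), I would set $F={\rm Fix}_{\P}(g)$, assume $|F|\geq 2$ (so that $o(g)=\lambda$), and fix $x\in F$. Applying the same semi-regularity argument to the $k$ blocks through $x$ yields $\lambda\mid k$; writing $m=k/\lambda$ (with $m\geq 2$ by non-triviality of $\D$), let $\mathcal{O}_1,\dots,\mathcal{O}_m$ be the $\langle g\rangle$-orbits on those blocks, each of length $\lambda$. Since $g$ fixes $F$ pointwise, $f_i:=|F\cap B|$ is constant on $B\in\mathcal{O}_i$, and each $y\in F\setminus\{x\}$ lies in every block of the unique orbit consisting of the $\lambda$ blocks through $\{x,y\}$. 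Partitioning $F\setminus\{x\}$ according to this orbit gives
\[
|F|-1=\sum_{i=1}^{m}(f_i-1).
\]
The inclusion $F\cap B\subseteq B\cap B^g$ (where $B\ne B^g$, else $g$ would fix $x\in B$) together with $|B\cap B^g|=\lambda$ in a symmetric design yield $f_i\leq\lambda$, which immediately produces the second bound $|F|\leq m(\lambda-1)+1$.

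To obtain the first bound $|F|\leq m(m-1)+1$, I would equip $F$ with the linear space $\mathcal{L}$ whose lines are the ``orbit-blocks'' $F\cap B$ of size $\geq 2$ (one per orbit $\mathcal{O}_i$ with $f_i\geq 2$); every pair of fixed points lies on exactly one such line, namely the one attached to the orbit of the $\lambda$ blocks through them. When $|\mathcal{L}|\geq 2$, the standard linear-space argument (for a line $L$ take any $y\notin L$; the $r_y\leq m$ lines through $y$ must reach every point of $L$ by distinct lines, so $|L|\leq r_y\leq m$) forces the maximum line size to be at most $m$, and hence $|F|-1\leq m(m-1)$. The remaining case $|\mathcal{L}|=1$ is the main obstacle and requires genuine design arithmetic: all of $F$ sits in one ``rich'' orbit $\mathcal{O}$ of $\lambda$ blocks, while every other orbit through any $y\in F$ is ``poor'' (containing only $y$ among fixed points); the resulting $\lambda+|F|(m-1)\lambda$ blocks with at least one fixed point are pairwise distinct, and bounding this count by $v=\lambda m^2-m+1$ after simplification gives $|F|\leq m\leq m(m-1)+1$. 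Combining both bounds yields the claimed minimum.
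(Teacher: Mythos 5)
Your argument is correct, and it is essentially the dual of the paper's proof with different machinery. The paper works on the block side: it takes the set $S$ of fixed blocks (of the same cardinality as ${\rm Fix}_{\P}(g)$ by Lemma~\ref{LemmaSameFixed}), proves that any three fixed blocks meet in $\lambda$ or $0$ points, fixes a reference block $B\in S$, partitions $S\setminus\{B\}$ into classes by their trace on $B$, and bounds the number of classes by $\lfloor k/\lambda\rfloor$ and each class size by $\min\{\lfloor(k-1)/\lambda\rfloor,\lambda-1\}$ via an explicit point count and the ``$\lambda$ blocks through two points'' constraint. You work on the point side: you first extract $\lambda\mid k$ (a small strengthening the paper does not record, which makes the floors exact), get the bound $f_i\le\lambda$ from $F\cap B\subseteq B\cap B^g$, and replace the paper's point count by a linear-space incidence argument on $F$, paying for it with a separate block-counting argument in the degenerate one-line case (where $F$ is contained in a single block); that case has no explicit analogue in the paper because there the reference block $B$ absorbs it. Both routes are sound; the paper's is slightly more self-contained, yours makes the combinatorial structure on ${\rm Fix}_{\P}(g)$ more transparent. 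Your part (b) matches the paper's appeal to Lemma~\ref{Lemma|H|lambda}.

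One imprecision you should repair: as written, the lines of $\mathcal{L}$ are attached to the orbits $\mathcal{O}_1,\dots,\mathcal{O}_m$ on $\D(x)$, so every line passes through $x$, and then a pair $\{y,z\}\subseteq F\setminus\{x\}$ need not lie on any line (the $\lambda$ blocks through $y,z$ need not contain $x$), which would break the step ``the $r_y\le m$ lines through $y$ reach every point of $L$.'' Your parenthetical shows you intend the correct construction: take as lines all distinct sets $F\cap B$ of size at least $2$, over all blocks $B$ (equivalently, one per free $\langle g\rangle$-orbit of such blocks, since the $\lambda$ blocks through any two fixed points form a single orbit with constant trace on $F$). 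With that definition every pair of fixed points lies on exactly one line, two lines meet in at most one point, and for every $w\in F$ the blocks through $w$ fall into exactly $k/\lambda=m$ free orbits, so $r_w\le m$; the rest of your argument then goes through as stated.
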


\begin{proof}
{\rm (a)}
Let $g\in \Aut(\D)$ with $o(g)=\lambda$. If $|{\rm Fix}_{\P}(g)|\leq2$, then the lemma holds clearly. In the following we assume that $|{\rm Fix}_{\P}(g)|\geq3$.
Note that $g$ fixes an equal number of points and blocks (Lemma \ref{LemmaSameFixed}). We alternatively analyze the set $S$ of fixed blocks of $g$.
We claim that any three fixed blocks intersect either in a set of $\lambda$ points or in an empty set.
In fact, let two of them intersect in a set $\Delta$ of $\lambda$ points. If the third block $C$ satisfies $|C\cap\Delta|>1$,
then $g$ set-wisely fixes $C\cap\Delta$.
Since $G_C^C$ is semi-regular on $C$, each non-identity element of $G_C^C$ has no fixed points on $C$. By Lemma \ref{LemmaG^Bfaithful}, $G_C$ acts faithfully on $C$. Then $\langle g\rangle \leq G_C\cong G_C^C$ is semi-regular on $C$.
Clearly, each non-identity element of $\langle g\rangle$ has no fixed point on any block of $S$, and hence on the intersect $C\cap\Delta$ of three blocks.
So $\lambda=o(\langle g\rangle)$ divides $|C\cap\Delta|$.
Hence $C\cap\Delta=\Delta$ and the claim is proved.

Now let $B$ be a fixed block of $g$.
Then for blocks $C_i, C_j \in S \setminus \{B\}$, we have $(B \cap C_i) \cap (B \cap C_j)=B \cap C_i$ or $\emptyset$.
Define $$R=\{(C_i,C_j) \in (S \setminus \{B\}) \times (S \setminus \{B\}) |B \cap C_i=B \cap C_j\} $$
to be an equivalence relation on $ S \setminus \{B\}$. Let $h$ be the number of classes of $S \setminus \{B\}$ under the equivalence relation $R$. And without loss of generality, let $C_1, \dots, C_h \in S \setminus \{B\}$ such that $B \cap C_i \ne B \cap C_j$ for any $1 \leq i <j \leq h$.
Since $(B \cap C_1) \sqcup \dots \sqcup (B \cap C_h) \subseteq B$, we have $h \leq \lfloor\frac{k}{\lambda}\rfloor.$
Let $$S_i=\{C \in S \setminus \{B\}|B \cap C = B \cap C_i\}$$ and $m_i=|S_i|$.
Then we have $S= S_1 \sqcup \dots \sqcup S_h\sqcup \{B\}$.

Now consider all points incident with $S_i \sqcup \{B\}$. For any distinct blocks $B_1$ and $B_2$ of $S_i$, we have $B \cap B_1 = B \cap B_2=B\cap C_i$ and $(B_1 \setminus C_i) \cap (B_2 \setminus C_i)= \emptyset$. Hence all points incident with $S_i \sqcup \{B\}$ are $(B_1 \setminus C_i) \sqcup \dots \sqcup (B_{m_i} \setminus C_i) \sqcup ((B \setminus C_i) \sqcup C_i)$, where $B_j \in S_i$.
Then we have
$$(m_i+1)(k-\lambda)+\lambda\leq v=\frac{k(k-1)}{\lambda}+1.$$
So
$$m_i+1\leq\frac{k(k-1)-\lambda(\lambda-1)}{\lambda(k-\lambda)}=\frac{k+\lambda-1}{\lambda}=\frac{k-1}{\lambda}+1.$$
This gives
$$m_i\leq\lfloor\frac{k-1}{\lambda}\rfloor.$$

On the other hand, $|B \cap C_i|=\lambda \geq 2$.
By the definition of designs, the number of blocks incident with two points is $\lambda$, we have $m_i+1=|S_i \sqcup \{B\}| \leq\lambda$.
Hence
$$m_i\leq min\{\lfloor\frac{k-1}{\lambda}\rfloor,\lambda -1\}.$$

Since $S= S_1 \sqcup \dots \sqcup S_h\sqcup \{B\}$, we have
\begin{align*}
|{\rm Fix}_{\P}(g)|=|{\rm Fix}_{\B}(g)|&=m_1+m_2+\cdots+m_h +1\\
&\leq\lfloor\frac{k}{\lambda}\rfloor(min\{\lfloor\frac{k-1}{\lambda}\rfloor,\lambda-1\})+1\\
&=min\{\lfloor\frac{k}{\lambda}\rfloor\lfloor\frac{k-1}{\lambda}\rfloor+1,\,\lfloor\frac{k}{\lambda}\rfloor(\lambda-1)+1\}.
\end{align*}

{\rm (b)} Assume that $|{\rm Fix}_{\P}(g)|=|{\rm Fix}_{\B}(g)|> 1$. Clearly, by Lemma \ref{LemmaG^Bfaithful}, $\langle g\rangle$ acts faithfully and semi-regularly on the intersection $\Delta$ of any two fixed blocks. The contradiction arises from $o(\langle g\rangle)(>\lambda)$ dividing $|\Delta|=\lambda$ by Lemma \ref{Lemma|H|lambda}.
\end{proof}

\begin{remark}\rm
The dual statements of Lemmas \ref{LemmaAffineSoluble}--\ref{LemmaFixedPointsCore} also hold (in terms of point stabilizers), since the proofs can be applied to the dual structures of symmetric designs.
\end{remark}

\medskip

\section{Locally abelian symmetric designs and Frobenius groups}\label{SectionAbelianFrobenius}

In order to study locally transitive block designs with particular local actions, one might start from the cyclic or abelian local action.
Noticing that a primitive group with non-trivial abelian point stabilizers must be a Frobenius group,
we present results concerning locally abelian symmetric designs and Frobenius groups in the following Propositions \ref{LemmaAbelian} and \ref{LemmaFrobeniusFlagregular}.

\begin{proposition}\label{LemmaAbelian}
If $G$ is a locally transitive automorphism group of a symmetric design $\D$, where $G_B^B$ is abelian, then this local action is faithful, and there exists $x\in\P\setminus B$ such that $G_B=G_x$.
\end{proposition}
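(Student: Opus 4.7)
The plan is to exploit the fact that an abelian transitive group is regular, then apply the axial/central automorphism machinery for symmetric designs. Since $G$ is locally transitive (hence flag-transitive), $G_B^B$ is a transitive abelian subgroup of $\Sym(B)$, so it is regular of order $k$, and every non-identity element of $G_B^B$ is fixed-point-free on $B$. When $\lambda\geq 2$, the bound $0\leq\lambda-1$ activates the second clause of Lemma \ref{LemmaG^Bfaithful} and gives that $G_B$ acts faithfully on $B$; hence $G_B\cong G_B^B$ is abelian of order $k$, and comparing $|G|=bk=vk$ with $|G|=v|G_x|$ forces $|G_x|=k$ and the regularity of $G_x$ on $\D(x)$. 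The case $\lambda=1$ is handled separately via Lemma \ref{LemmaKantorProPlane}: the Desarguesian alternative is impossible because $G_B^B$ would then contain the non-abelian subgroup $\PSL_2(q)$, while in the Frobenius alternative the conclusion follows directly from the Frobenius complement having a unique fixed point, which cannot lie on $B$ since $H$ is regular on $B$.

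Assume now $\lambda\geq 2$ and set $H:=G_B$. I will argue by contradiction that $H$ fixes some point of $\P$. Suppose not. For any non-identity $h\in H$, Lemma \ref{LemmaSameFixed} together with the fact that $h$ fixes $B$ as a block yields $|{\rm Fix}_{\P}(h)|=|{\rm Fix}_{\B}(h)|\geq 1$, and any fixed point lies in $\P\setminus B$ by the regularity of $H$ on $B$. If $h$ had exactly one fixed point $x_h$, then commutativity of $H$ would force every $h'\in H$ to stabilise ${\rm Fix}_{\P}(h)=\{x_h\}$, making $x_h$ a common fixed point of $H$, a contradiction. Hence every non-identity $h\in H$ has at least two fixed points $x,y\in\P\setminus B$.

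Given such $h$, $x$, $y$, let $C$ be any of the $\lambda\geq 2$ blocks through both $x$ and $y$; since $h$ permutes these $\lambda$ blocks, each of them is fixed setwise. By block-transitivity $G_C$ is conjugate to $G_B$ in $G$, so $G_C^C$ is also regular abelian on $C$, and the two fixed points $x,y$ of $h^C$ then force $h^C=1$; that is, $C$ is an axis of $h$. Since this produces $\lambda\geq 2$ distinct axes, we contradict Lemma \ref{LemmaOneAxis}. Therefore $H$ has a fixed point $x\in\P$, which must lie in $\P\setminus B$ by the regularity of $H$ on $B$, and $H\leq G_x$ together with $|H|=k=|G_x|$ yields $G_B=H=G_x$. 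The key subtlety is the upgrade step: recognising that the regularity of $G_C^C$ (inherited by conjugacy from $G_B^B$) promotes a two-point fixed set on $C$ to pointwise fixation of $C$, so that the multiplicity $\lambda\geq 2$ of common blocks through a pair of fixed points of $h$ produces several axes and triggers the conflict with Lemma \ref{LemmaOneAxis}.
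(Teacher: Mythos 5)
Your reduction to a regular abelian local action, the use of Lemma \ref{LemmaG^Bfaithful} to get faithfulness of $G_B$ on $B$ for $\lambda\geq 2$, and the observation that (assuming $G_B$ has no fixed point) commutativity forces every non-identity $h\in G_B$ to have at least two fixed points, are all correct. The proof breaks at the final step. The assertion ``since $h$ permutes these $\lambda$ blocks, each of them is fixed setwise'' is a non sequitur: $h$ stabilizes the \emph{set} of $\lambda$ blocks through $x$ and $y$, but nothing prevents it from permuting them in nontrivial cycles. In fact the opposite of your claim holds: if $h\neq 1$ fixed such a block $C$, then $h^C$ would be an element of the regular group $G_C^C$ fixing the two points $x,y\in C$, so $h^C=1$, and the faithfulness of $G_C$ on $C$ (which you have already established) would give $h=1$. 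So a non-identity $h$ fixes \emph{none} of the $\lambda$ blocks through $x$ and $y$; all one can extract is that $\langle h\rangle$ acts semiregularly on them, i.e.\ $o(h)\mid\lambda$ --- precisely the content of Lemma \ref{Lemma|H|lambda}. Knowing that the exponent of $G_B$ divides $\lambda$ is not a contradiction for a non-cyclic abelian group of order $k>\lambda$, so no axis of a non-identity automorphism is ever produced and Lemma \ref{LemmaOneAxis} is never legitimately invoked.

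To close the argument you need the paper's Sylow step (or an equivalent): since $|G_B|=k\nmid\lambda$, some Sylow $p$-subgroup $L$ of $G_B$ satisfies $|L|\nmid\lambda$; Lemma \ref{Lemma|H|lambda} shows $L$ fixes at most one point, while a counting argument modulo $p$ (every $L$-orbit would have length divisible by $p$ if $L$ were fixed-point-free, forcing $p\mid v$, which is incompatible with $p\mid v-1=k(k-1)/\lambda$) shows $L$ fixes at least one point. Hence $L$ fixes a unique point $x$, and since $L$ is normal in the abelian group $G_B$, the set $\{x\}={\rm Fix}_{\P}(L)$ is $G_B$-invariant, giving $G_B\leq G_x$ and then $G_B=G_x$ with $x\notin B$ by transitivity of $G_B^B$. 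A smaller remark: in your $\lambda=1$ case, the claim that $G_B$ is a Frobenius complement with a unique fixed point needs the Schur--Zassenhaus conjugacy of complements of the Frobenius kernel, which is worth stating explicitly.
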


\begin{proof}
Since $G$ is locally transitive, $G_B^B$ as a homomorphic image of $G_B$ is abelian and transitive by Lemma \ref{LemmaFlagtrProperties} (b).
As a transitive abelian permutation group $H$ is regular, $G_B^B$ is regular.
If $\lambda=1$, then $\D$ is a projective plane, and by Lemma \ref{LemmaKantorProPlane} we know that $G$ is a solvable Frobenius group.
Thus the proposition holds by Lemma \ref{LemmaAffineSoluble}. Now we assume that $\lambda>1$.
It then follows from Lemma \ref{LemmaG^Batmost2} that $G_B$ is faithful on $B$.
Thus $|G_B|=|G_B^B|=k$.
It is clear that there exists a Sylow $p$-subgroup $L$ of $G_B$ such that $|L|\nmid\lambda$.
Otherwise, if for each Sylow $p$-subgroup $L$ of $G_B$ we have $|L|\mid\lambda$. Then it would deduce that $|G_B|=k\mid\lambda$, which contradicts $k>\lambda$.

Now, by Lemma \ref{Lemma|H|lambda} we get that $L$ fixes at most one point on $\P$.
If $L$ has no fixed points on $\P$, then all point-orbits of $L$ have lengths divisible by $p$, and it implies that
$$p\,\,\Big|\,\, v=\frac{k(k-1)}{\lambda}+1.$$
Since $|L|=p^i\mid k$ but $|L|\nmid\lambda$, we have $p\mid\frac{k(k-1)}{\lambda}$ and it implies that $p\mid1$, a contradiction.
Thus $L$ fixes exactly one point on $\P$, say $x$.
Note that $L\unlhd G_B$ as $G_B\cong G_B^B$ is abelian.
It follows that for any $g\in G_B$,
$$x^g=({\rm Fix}_{\P}(L))^g={\rm Fix}_{\P}(L^g)={\rm Fix}_{\P}(L)=x.$$
We get $G_B\leq G_x$ and thus $G_B=G_x$. The transitivity of $G_x^B=G_B^B$ implies that $x\in\P\setminus B$.
\end{proof}

A result of Kantor\cite{KantorProplane} (or see Lemma \ref{LemmaKantorProPlane}) showed that if a Frobenius group $G$ acts flag-transitively on a projective plane $\D$, then the number of points of $\D$ is a prime and $G$ has odd order.
The lemma below extends this result from projective plane to general symmetric designs.

\begin{proposition}\label{LemmaFrobeniusFlagregular}
If $G$ is a Frobenius group on $\P$, acting flag-transitively on a symmetric design $\D=(\P,\B)$, then $G\leq\AGammaL_1(v)$ and has odd order, and is point-primitive, block-primitive and flag-regular.
Moreover, for any $B\in\B$ there exists $x\in\P\setminus B$ such that $G_B=G_x$.
\end{proposition}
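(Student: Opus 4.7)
The plan is to proceed in four steps: (i) establish the block-point correspondence $G_B = G_y$ for some $y \notin B$ together with flag-regularity; (ii) show $|H|$ is odd (whence $G$ is solvable); (iii) prove point-primitivity, so that the O'Nan-Scott theorem forces an affine structure on $G$; and (iv) deduce $v$ is odd and $G \leq \AGammaL_1(v)$.

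For (i), write $G = N \rtimes H$ with $N$ the Frobenius kernel (regular on $\P$) and $H = G_x$ a Frobenius complement. Every non-identity element of $N$ is fixed-point-free on $\P$, hence by Lemma~\ref{LemmaSameFixed} fixes no block, so $N \cap G_B = 1$. Since $|G_B| = |G|/|\B| = |H|$ and $(|N|,|H|) = 1$ in any Frobenius group, order counting forces $NG_B = G$, so $G_B$ is a complement to $N$ and conjugate to $H$: $G_B = G_y$ for some point $y$. If $y \in B$, then $G_y$ would fix $B \in \D(y)$ while acting transitively on $\D(y)$, forcing $r = 1$, which is impossible. For a flag $(x,B)$ the Frobenius intersection property $H \cap H^g = 1$ for $g \notin H$ gives $G_x \cap G_y = 1$, and hence $G$ is flag-regular. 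The bijection $B \mapsto y(B)$ is $G$-equivariant, so the actions on $\P$ and $\B$ are equivalent, and block-primitivity coincides with point-primitivity.

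For (ii), an involution $t \in H$ would, as a Frobenius complement element, fix only $x$ on $\P$ and, via the bijection, only the unique block $B^*$ with $G_{B^*} = G_x$; Lemma~\ref{LemmaInvolutionFixSym} would then give $1 = |\mathrm{Fix}_\P(t)| \geq (k-1)/\lambda + 1 \geq 2$, a contradiction. Hence $|H|$ is odd, $H$ is solvable, and with $N$ nilpotent the group $G$ is solvable. For (iii), each design block has the form $B(y) = G_y \cdot z$, a single regular orbit of the Frobenius complement $G_y$ on $\P \setminus \{y\}$. Any imprimitivity system on $\P$ would correspond to a proper $H$-invariant subgroup $N_0 < N$, and the 2-design axiom forbids $B(x)$ from lying inside a single $N_0$-coset (otherwise pairs of points in distinct cosets would lie in no block); matching the orbit structure of $H$ on $N$ against the covering identity $\lambda(v-1) = k(k-1)$ yields a contradiction, so $G$ is primitive. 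By the O'Nan-Scott theorem (Lemma~\ref{LemmaONan}) $G$ is of affine type, so $N$ is elementary abelian of order $v = p^d$ and $G \leq \AGL_d(p)$.

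For (iv), the Huppert--Passman theorem on solvable fixed-point-free subgroups of $\GL_d(p)$ places $H$ in $\GammaL_1(p^d)$, giving $G \leq \AGammaL_1(v)$. To rule out $p = 2$: an involution $s \in N$ would satisfy $|B \cap sB| = \lambda$ and act freely on $B \cup sB$, forcing $\lambda$ even, and combining this with Lemma~\ref{LemmaCubic} and the design equations gives a numerical obstruction, so $p$ is odd and $|G| = vk$ is odd. The ``moreover'' clause is exactly the content of step~(i). The main obstacle is step~(iii): ruling out imprimitivity requires a delicate interplay between the 2-design axiom and the fixed-point-free conjugation action of $H$ on the nilpotent kernel $N$, and the oddness of $v$ in step~(iv) rests on the elementary abelian structure produced in step~(iii).
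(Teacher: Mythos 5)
Your steps (i) and (ii) are correct and essentially the paper's: block stabilizers are complements of the kernel $N$ (they intersect $N$ trivially because nonidentity elements of $N$ fix no point, hence no block, by Lemma~\ref{LemmaSameFixed}), so by Schur--Zassenhaus $G_B=G_x$ for some $x\notin B$, and flag-regularity and the oddness of the complement (via Lemma~\ref{LemmaInvolutionFixSym}) follow. The genuine gaps are in your steps (iii) and (iv). For point-primitivity you only show that a block cannot lie inside a single class of a putative imprimitivity system; that does not rule out imprimitivity, since a block may meet several classes, and the sentence ``matching the orbit structure of $H$ on $N$ against the covering identity $\lambda(v-1)=k(k-1)$ yields a contradiction'' is not an argument. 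The paper avoids this entirely: since $G_B=G_x$ acts regularly on $B\subseteq\P\setminus\{x\}$ and $G_x$ is semiregular on $\P\setminus\{x\}$ (Frobenius), one gets $k=|G_x|$ and $k\mid v-1$; combined with $k(k-1)=\lambda(v-1)$ this forces $\gcd(k,\lambda)=1$, and then \cite[2.3.7]{Dembowski} gives point-primitivity at once. Only after primitivity is established does O'Nan--Scott make $N$ elementary abelian, which your step (iii) tacitly uses.

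In step (iv), your deduction that $\lambda$ is even when $p=2$ is fine (an involution $s\in N$ is fixed-point-free and preserves $B\cap B^s$, a set of size $\lambda$), but the claimed ``numerical obstruction'' from Lemma~\ref{LemmaCubic} does not exist: under flag-regularity $|G|=vk$ and $|G_x|=k$, so Lemma~\ref{LemmaCubic} only asserts $v<k^2$, which is automatic for any symmetric design with $\lambda>1$. Evenness of $\lambda$ alone is not contradictory. The paper instead uses the fact that $\D$ corresponds to a difference set in the elementary abelian $2$-group $N$ and invokes Mann's theorem \cite{MannDiffenceSet} to pin the parameters to $(2^{2m},2^{2m-1}\pm 2^{m-1},\cdot)$, whence $k$ is even, contradicting $k\mid v-1$; some input of this strength is required. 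Finally, for $G\leq\AGammaL_1(v)$ the paper simply cites \cite{Symrm=1Affine}; your appeal to a ``Huppert--Passman'' theorem placing the odd-order fixed-point-free complement inside $\GammaL_1(p^d)$ is plausible but would need a precise statement and reference to be a proof.
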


\begin{proof}
Since $G$ is a Frobenius permutation group, we have $G = N {:} H$, where the Frobenius kernel $N$ is a nilpotent regular normal subgroup, and the Frobenius complement $H$ satisfies $(|N|,|H|)=1$.
Note that the point stabilizers and the block stabilizers are complements of $N$ on $G$, and all complements of $N$ are conjugate.
Hence, the point stabilizers and block stabilizers lie in the conjugacy class of Frobenius complements.

Now the flag-transitivity of $G$ implies that any block of $\D$ is a suborbit of $G_x$ for some $x\in\P\setminus B$.
We get that $G_B=G_x$ acts regularly on $B$ and thus $G$ is flag-regular by Lemma \ref{LemmaDefFlagregular}.
We also deduce that $k=|B|$ divides $v-1$. From Lemma \ref{LemmaParameters} we have $\gcd(k,\lambda)=1$, and this follows that $G$ is point-primitive by \cite[2.3.7]{Dembowski}.
As $G$ has equivalent actions on $\B$ and $\P$, $G$ is also block-primitive.

Since $G$ admits a nilpotent regular normal subgroup $N$, by the O'Nan-Scott Theorem, $G$ can only be of affine type, whose socle is exactly the Frobenius kernel $N$ and is elementary abelian.
Now $\D$ induces an elementary abelian difference set.
Suppose that $|N|=v$ is even. Then $v$ is a power of 2.
By \cite[Theorem 1]{MannDiffenceSet}, $\mathcal{D}$ is a $2$-$(2^{2m},2^{2m-1}-2^{m-1},2^{2m-2}-2^{m-1})$ design or its complement design, which contradicts the fact that $k\mid v-1$ since $v$ and $k$ are even in either case.
So $N$ has odd order.
On the other hand, Lemma \ref{LemmaInvolutionFixSym} shows that if any involution of $G$ fixes some point, then it must fix more than one point. Hence the Frobenius complement of $G$ contains no involutions and so it has odd order. Thus $G$ has odd order.
Further, by \cite{Symrm=1Affine}, we know that $G\leq\AGammaL_1(v)$ for some prime power $v=p^d$ .
\end{proof}

Note that a transitive abelian permutation group is regular.
If a locally transitive automorphism group $G$ of a symmetric design $\D$ has an abelian point stabilizer $G_x$ , then $G_x^{\D(x)}$ is also abelian, and thus regular and faithful by the dual of Lemma \ref{LemmaG^Batmost2}.
Similarly, if $G$ has an abelian block stabilizer $G_B$, then $G_B^B$ is also abelian, and hence regular and faithful by the same lemma.
Apply the proof of Proposition \ref{LemmaAbelian} to the dual structure of $\D$, the dual statement of Proposition \ref{LemmaAbelian} holds for abelian $G_x^{\D(x)}$.
Hence we have the following.

\begin{proposition}\label{PropLocallyAbelian}
Let $\D=(\mathcal{P}, \mathcal{B})$ be a symmetric design and $G$ be a locally transitive automorphism group of $\D$.
Let $x\in\P$ and $B\in\B$.
If $K$ is an abelian group, then the following are equivalent:
\begin{enumerate}
\item[\rm(a)]  $G_x\cong K$.

\item[\rm(b)] $G_B\cong K$.

\item[\rm(c)] $G_x^{\D(x)}\cong K$.

\item[\rm(d)] $G_B^B\cong K$.

\item[\rm(e)] The block $B$ is a faithful orbit of some point stabilizer isomorphic to $K$.
\end{enumerate}
\end{proposition}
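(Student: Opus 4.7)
The plan is to use (a) as a hub and show that (a) is equivalent to each of (b), (c), (d), and (e), invoking Proposition~\ref{LemmaAbelian} together with its dual statement (asserted in the Remark preceding Proposition~\ref{PropLocallyAbelian}): if $G_x^{\D(x)}$ is abelian, then the local action is faithful and $G_x = G_{B'}$ for some block $B'$ with $x \notin B'$.

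The equivalences (a)$\Leftrightarrow$(c) and (b)$\Leftrightarrow$(d) are immediate: if either $G_x$ or its transitive image $G_x^{\D(x)}$ is abelian then so is the other, and the dual of Proposition~\ref{LemmaAbelian} delivers the isomorphism $G_x \cong G_x^{\D(x)}$ through faithfulness; the $B$-side is symmetric via Proposition~\ref{LemmaAbelian} itself. To cross from the point-side to the block-side, the dual of Proposition~\ref{LemmaAbelian} supplies a block $B'$ with $G_x = G_{B'}$, so $G_{B'} \cong K$, and block-transitivity of $G$ propagates this to all blocks, giving (a)$\Rightarrow$(b); the converse (b)$\Rightarrow$(a) is obtained analogously from Proposition~\ref{LemmaAbelian}. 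For (e), the implication (e)$\Rightarrow$(a) follows from point-transitivity alone, as $G_{x'} \cong K$ at a single point $x'$ forces $G_x \cong K$ at every point. Conversely, assuming (a), the dual of Proposition~\ref{LemmaAbelian} produces a block $B'$ not through $x$ with $G_x = G_{B'}$, and the combination of Lemma~\ref{LemmaFlagtrProperties} (transitivity of $G_{B'}$ on $B'$) with the faithfulness of the $B'$-action shows that $B'$ is a faithful $G_x$-orbit; block-transitivity then transfers this property to the given block $B$ by choosing $g \in G$ with $(B')^g = B$, whence $B$ is a faithful $G_{x^g}$-orbit with $G_{x^g} \cong K$.

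The main obstacle I anticipate is essentially bookkeeping: Proposition~\ref{LemmaAbelian} and its dual only guarantee the existence of \emph{some} paired point/block, whereas each of (a)--(e) is formally a condition at a specified $x$ or $B$. The argument must therefore repeatedly invoke point- and block-transitivity of $G$ to translate between ``for some'' and ``for all'', and in particular to match the specific block $B$ appearing in (e) with the block produced by the propositions. Once this alignment is made explicit, the proof reduces to a short chain of one-line implications.
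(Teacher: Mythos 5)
Your proposal is correct and follows essentially the same route as the paper, which derives the proposition directly from Proposition~\ref{LemmaAbelian} and its dual statement (the paper leaves the bookkeeping implicit, while you spell out the transitivity arguments needed to pass between ``for some'' and ``for all''). No gaps.
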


Notice that if $|G_x|=|G_B|$ is a prime $p$, then $k=p$.
It follows that $(k,\lambda)=1$ as $k>\lambda$. By a result of Dembowski \cite[2.3.8]{Dembowski}, $G$ must be a primitive Frobenius group.
By Proposition \ref{LemmaFrobeniusFlagregular} we immediately have the fact as follows.

\begin{corollary}\label{PropoGB=p}
If $G$ is a locally transitive automorphism group of a symmetric design $\D$ with point stabilizers of prime order $p$, then $G$ is a primitive Frobenius group of odd order.
\end{corollary}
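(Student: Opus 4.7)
My plan is to run the short parameter computation sketched in the paragraph immediately preceding the statement, and then invoke two results that have already been established. First, because $G$ is flag-transitive, Lemma \ref{LemmaFlagtrProperties}(b) implies that $G_B$ acts transitively on the $k$-subset $B$, so $k$ divides $|G_B|=p$. The non-triviality convention $k>2$ then forces $k=p$. Combining this with the symmetric-design identity $k(k-1)=\lambda(v-1)$ of Lemma \ref{LemmaParameters}(a$'$) and the strict inequality $k>\lambda$ (again from non-triviality, since $\lambda<k<v-1$), primality of $p$ immediately rules out $p\mid\lambda$, and hence $\gcd(k,\lambda)=1$.

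With $\gcd(k,\lambda)=1$ in hand, I would invoke the classical result of Dembowski \cite[2.3.8]{Dembowski}, which asserts that a flag-transitive automorphism group of a symmetric design satisfying this coprimality is a primitive Frobenius group on $\P$. This step produces both point-primitivity and the Frobenius structure at once. Finally, Proposition \ref{LemmaFrobeniusFlagregular} applies verbatim to our $G$: it is a flag-transitive Frobenius group acting on a symmetric design, so $G\leq\AGammaL_1(v)$ and $|G|$ is odd. Assembling these three observations yields the corollary.

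There is no substantive obstacle here: every nontrivial input has been isolated in a previous lemma or proposition, and the only work is a one-line parameter manipulation to check that $p\nmid\lambda$. The only point that warrants a sentence of care is making sure the non-triviality hypothesis ($2<k<v-1$) is invoked twice — once to rule out $k=1$ or $2$ (giving $k=p$), and once to rule out $k\leq\lambda$ (giving coprimality) — but both invocations are routine.
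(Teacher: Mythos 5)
Your proposal is correct and follows essentially the same route as the paper: the paper's own argument is exactly the observation that $k=p$, hence $\gcd(k,\lambda)=1$ since $k>\lambda$, followed by Dembowski \cite[2.3.8]{Dembowski} for primitivity and the Frobenius structure, and then Proposition \ref{LemmaFrobeniusFlagregular} for the odd order. The only difference is that you spell out the routine divisibility and non-triviality checks that the paper leaves implicit.
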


\medskip

\section{Locally dihedral symmetric designs}\label{SecProof1}

Now we focus on the locally dihedral symmetric designs.
For the convenience, we make the following Hypothesis in this section.

\begin{hypothesis}
Suppose that $\D=(\mathcal{P},\mathcal{B})$ is a symmetric $2$-$(v,k,\lambda)$ design and $G$ is a locally transitive
(equivalently, flag-transitive) automorphism group of $\D$.
\end{hypothesis}


The following Lemmas \ref{Lemma1orF} and \ref{LemmaDihedralAction} are folklore and elementary.

  \begin{lemma}\label{Lemma1orF}
If $G_B^B=N{:}F\cong \DD_{2n}$ where $N\cong \ZZ_n$ and $F\cong \ZZ_2$, then for any $x\in B$, $(G_B^B)_x$ is either trivial or conjugate to $F$, namely, $k=2n$ or $n$.
\end{lemma}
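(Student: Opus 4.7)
The plan is to exploit that $G_B^B$, by definition, acts faithfully on $B$, and to use orbit-stabilizer together with a classification of core-free subgroups of $D_{2n}$. Writing $H=(G_B^B)_x$, I would first observe that since $G_B^B$ is transitive on $B$, $|H|$ is $2n/k$, and since $G_B^B$ acts faithfully on $B$, $H$ is \emph{core-free} in $D_{2n}$, i.e.\ $H$ contains no non-trivial subgroup of $D_{2n}$ that is normal in $D_{2n}$.

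The main step is then to pin down which subgroups of $D_{2n}=N{:}F$ are core-free. The key observation is that every subgroup of the cyclic group $N$ is normal in $D_{2n}$: this is because $N$ is abelian and any element of $D_{2n}\setminus N$ acts on $N$ by inversion, which preserves every subgroup of $N$. Hence, if $H\leq N$, then $H$ equals its own core in $D_{2n}$, forcing $H=1$. If instead $H\not\leq N$, then $H\cap N$ is a subgroup of $N$ and so is normal in $D_{2n}$; being contained in $H$, it lies in the core of $H$, and therefore $H\cap N=1$. Combined with the fact that $H$ projects non-trivially into $D_{2n}/N\cong Z_2$, this forces $|H|=2$, with the non-identity element of $H$ lying outside $N$. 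Thus $H$ is a complement of $N$ in $D_{2n}$, i.e.\ conjugate to $F$ (for $n$ odd all such complements form a single conjugacy class via $(a^{i}b)^{a^{j}}=a^{2j+i}b$; for $n$ even one falls into the appropriate conjugacy class of complements, which is all the lemma requires for the numerical conclusion).

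Finally, applying $k=[G_B^B:H]=[D_{2n}:H]$ gives $k=2n$ in the first case and $k=n$ in the second, as stated. I do not expect a real obstacle here — the proof is essentially a short exercise on the subgroup lattice of $D_{2n}$ combined with faithfulness of $G_B^B$ on $B$. The only point to watch is the conjugacy claim when $n$ is even, where there are two conjugacy classes of complements of $N$; but the quantitative conclusion $k\in\{n,2n\}$, which is what is actually used in subsequent arguments, follows unambiguously from the core-free analysis.
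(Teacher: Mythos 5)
Your proof is correct and follows essentially the same route as the paper, which simply invokes transitivity of $G_B^B$ on $B$ and the fact that the only core-free subgroups of $D_{2n}$ are the trivial subgroup and the complements of $N$. You additionally spell out why that fact holds (every subgroup of $N$ is normal in $D_{2n}$, so a core-free $H$ must meet $N$ trivially) and correctly flag the harmless subtlety about two conjugacy classes of complements when $n$ is even; the paper leaves both points implicit.
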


\begin{proof}
The flag-transitivity of $G$ and Lemma \ref{LemmaFlagtrProperties} (b) implies that $G_B^B$ is transitive.
The conclusion immediately follows from the fact that the only core-free subgroups of $G_B^B$ are the trivial subgroup and the conjugates of $F$.
\end{proof}

\begin{lemma}\label{LemmaDihedralAction}
Suppose that $G_B^B=N{:}F\cong \DD_{2n}$ and $(G_B^B)_x=F$ for any $x\in B$. If $|N|$ is odd, then each non-identity element of $G_B^B$ fixes at most one point on $B$. If $|N|$ is even, then each non-identity element of $G_B^B$ either fixes no points or fixes exactly two points on $B$.
\end{lemma}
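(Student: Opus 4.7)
The plan is to identify the action of $G_B^B \cong D_{2n}$ on $B$ with the natural coset action of $D_{2n}$ on the $n$ cosets of $F$. Indeed, since the action is transitive with point stabilizer conjugate to $F$ of order $2$, we have $|B| = n$ and the action is permutation-isomorphic to the coset action. Writing $D_{2n} = \langle a, b \mid a^n = b^2 = 1,\ bab = a^{-1}\rangle$ with $N = \langle a\rangle$ and $F = \langle b\rangle$, I would use the standard identity that a coset $hF$ is fixed by $g$ if and only if $g \in hFh^{-1}$. The task then reduces to computing, for each non-identity $g$, the number of conjugates of $F$ containing $g$, weighted by $|N_G(F)|/|F|$.

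For elements of $N$, I would observe that any conjugate $hFh^{-1}$ is a subgroup of order $2$; since $F \cap N = 1$, conjugation shows $hFh^{-1} \cap N = 1$ as well, so no nontrivial $a^i$ lies in any conjugate of $F$ and hence fixes no point of $B$. This handles the rotation subgroup entirely, including the central involution $a^{n/2}$ that appears when $n$ is even. For reflections $g = a^i b \notin N$, a direct computation gives $a^{-j} F a^j = \langle a^{2j} b \rangle$, so conjugates of $F$ are precisely the order-$2$ subgroups generated by reflections of the form $a^{2j}b$.

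When $n$ is odd, the map $j \mapsto 2j \pmod n$ is a bijection, so every reflection lies in exactly one conjugate of $F$, and one checks that $a^j$ normalizes $F$ only when $n \mid 2j$, giving $N_G(F) = F$. Hence each reflection fixes exactly $|N_G(F)|/|F| = 1$ coset, proving the odd case. When $n$ is even, the image $\{2j \bmod n\}$ covers only the even residues, so exactly $n/2$ conjugates of $F$ exist, each containing one even-indexed reflection $a^{2k}b$; meanwhile $N_G(F) = \langle a^{n/2}, b\rangle$ has order $4$, so each even reflection fixes $4/2 = 2$ cosets. The remaining odd reflections $a^{2k+1}b$ lie in no conjugate of $F$ and so fix $0$ points, matching the conclusion that non-identity elements fix either $0$ or $2$ points.

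There is no substantial obstacle here; the argument is a transparent calculation inside $D_{2n}$. The only place that requires mild care is the even case, where one must recognize that reflections split into two $N$-orbits (axial versus non-axial) and that $N_G(F)$ picks up the extra central involution $a^{n/2}$, yielding order $4$ rather than $2$. Once this is noted, the fixed-point counts fall out immediately from the orbit–stabilizer principle applied to the coset action.
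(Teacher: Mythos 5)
Your proof is correct and follows essentially the same route as the paper: both analyze the coset action of $D_{2n}$ on $[D_{2n}{:}F]$ and count fixed points via the conjugates of $F$ and its normalizer. The only stylistic difference is that the paper dispatches the odd case by citing the Frobenius structure of $D_{2n}$ (with kernel $N$) and merely asserts the even-case counts, whereas you carry out the explicit conjugacy and normalizer computation; your version is more detailed but not a different argument.
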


\begin{proof}
If $|N|$ is odd, then $G_B^B$ is a Frobenius group with the Frobenius kernel $N$. Moreover, $G_B^B$ has exactly one conjugacy class of subgroups of order 2.

If $|N|$ is even, then $G_B^B$ has exactly 3 conjugacy classes of subgroups of order 2. One such class is the center ${\rm Z}(G_B^B)$, containing the unique subgroup of order 2 in $N$.
The non-identity elements in the other two classes form a partition of $G_B^B\setminus N$.
Let $F_1$ and $F_2$ be the representatives of these two classes respectively.
Since $(G_B^B)_x$ is core-free, we have $(G_B^B)_x$ is conjugate to $F_1$ or $F_2$.
The actions of $G_B^B$ on the coset space $[G_B^B{:}F_1]$ and on the coset space $[G_B^B{:}F_2]$ are permutation isomorphic.
In these two actions, each non-identity element of $G_B^B$ either fixes no points or fixes exactly two points.
\end{proof}

\begin{lemma}\label{LemmaLambda=1}
 If $G_B^B$ is dihedral and $\lambda=1$, then $\D$ is the Fano plane and $G=\PSL_3(2)$ with $G_B^B\cong \DD_6$, in which case $G_B\cong \SS_4$ acts unfaithfully on $B$.
 Moreover, if $G_B$ is dihedral, then $\lambda>1$.
\end{lemma}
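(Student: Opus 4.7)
The plan is to exploit the hypothesis $\lambda = 1$, which forces $\D$ to be a projective plane, and apply the classification of flag-transitive projective planes in Lemma \ref{LemmaKantorProPlane}. This splits the analysis into the Desarguesian case, where $\PSL_3(q) \unlhd G \leq \PGammaL_3(q)$, and the Frobenius case, where $G$ has odd order $(n^2+n+1)(n+1)$.

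The Frobenius case will be dispatched immediately: since $G$ has odd order, every subgroup and every quotient is of odd order, and in particular the quotient $G_B^B$ cannot be dihedral (as every $D_{2n}$ with $n \geq 3$ has even order).

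In the Desarguesian case, $B$ is a projective line with $q+1$ points, and I would exhibit $\PSL_2(q)$ inside $G_B^B$ by the following standard construction. Realize $B$ as the set of $1$-dimensional subspaces of a $2$-dimensional subspace $W \leq \GF_q^3$; any $A \in \SL_2(q)$ acting on $W$ extends to an element of $\SL_3(q)$ by acting as the identity on a complementary line, and this element lies in $\PSL_3(q)_B$ and induces $A$ on $B$. Hence $\PSL_2(q) \leq G_B^B$. Since every subgroup of a dihedral group is cyclic or dihedral, and $\PSL_2(q)$ is neither for $q \geq 3$ (it is $A_4$ when $q = 3$ and non-abelian simple when $q \geq 4$), the hypothesis forces $q = 2$. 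Then $\D$ is the Fano plane with $G = \PSL_3(2)$, and a direct check (using $|G_B| = |G|/(q^2+q+1) = 24$) shows $G_B \cong S_4$ acts on the $3$-point line as $S_3 \cong D_6$ with kernel the Klein four-group of elations with axis $B$.

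For the moreover clause, suppose $G_B$ is dihedral. In the Frobenius case, $G_B$ would inherit odd order from $G$, contradiction; in the Desarguesian case, the analysis above forces $q = 2$ and $G_B \cong S_4$, which is not dihedral (it has no element of order $12$). Either way $\lambda = 1$ is impossible. The main obstacle I anticipate is just the verification $\PSL_2(q) \leq G_B^B$ without getting drawn into computing $G_B^B$ exactly; the identity-on-complement lift makes this routine, and once it is in hand the rest is bookkeeping.
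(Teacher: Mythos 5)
Your proof is correct and follows the same skeleton as the paper's: invoke Kantor's classification (Lemma \ref{LemmaKantorProPlane}), eliminate the odd-order Frobenius case by parity, and then show that in the Desarguesian case the action induced on a line is too large to be dihedral unless $q=2$. The one point where you diverge is this last step: the paper observes that, since $\PSL_3(q)$ is $2$-transitive on points and two points determine a unique line, $G_B^B$ is $2$-transitive on $B$, and the only dihedral group admitting a faithful $2$-transitive representation is $D_6$; you instead exhibit $\PSL_2(q)\leq G_B^B$ by lifting $\SL_2(q)$ block-diagonally into $\SL_3(q)$ and note that $\PSL_2(q)$ is neither cyclic nor dihedral for $q\geq 3$. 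Both mechanisms are sound and of comparable length; yours has the minor advantage of not needing the classification of $2$-transitive dihedral actions, at the cost of a small explicit construction. For the ``moreover'' clause, note that a transitive quotient of a dihedral $G_B$ could a priori be $Z_2\times Z_2$ (regular on $k=4$ points, i.e.\ $q=3$) rather than dihedral in the paper's convention ($n\geq 3$), so ``the analysis above forces $q=2$'' needs the observation that your embedding $\PSL_2(3)\cong A_4\leq G_B^B$ rules this out as well; this is immediate from what you wrote, and the paper's own wording glosses over the same point, so it is not a genuine gap.
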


\begin{proof}
Assume that $G_B^B\cong \DD_{2n}$ and $\lambda=1$.
By Lemma \ref{LemmaKantorProPlane}, if a projective plane admits a flag-transitive automorphism group $G$, then either $\mathcal{D}$ is Desarguesian or $G$ is a Frobenius group of odd order.
The latter case is impossible since $G$ has even order.
So $\mathcal{D}$ is Desarguesian with $\PSL_3(q)\unlhd G\leq \PGammaL_3(q)$, where $q$ is a prime power $p^d$.
Note that the finite Desaguesian projective plane is isomorphic to the point-hyperplane design of projective space $\PG(2,q)$.
The group $\PSL_3(q)$ is 2-transitive on the points of $\PG(2,q)$, which implies that $G_B^B\cong \DD_{2n}
\geq\PSL_3(q)_B^B$ is also 2-transitive.
The only dihedral group admitting a faithful 2-transitive representation is $\DD_6$.
In this case $q=2$ and $\D$ is $\PG(2,2)$ (the unique $2$-$(7,3,1)$ design), $G=\PSL_3(2)$ with $G_B^B\cong \DD_6\cong S_3$ and $G_B\cong \SS_4$ acting unfaithfully on $B$.

If $G_B$ is dihedral,
then $G_B^B$ is either dihedral or has order 2.
It follows from the argument above and the transitivity of $G_B^B$ that $\lambda>1$.
\end{proof}

  \begin{hypothesis}
In the remaining part of this section we always assume that $\lambda>1$.
\end{hypothesis}

\begin{lemma}\label{LemmaFaithfulG^B}
If $G_B^B$ or $G_B$ is a dihedral group, then $G_B$ acts faithfully on $B$.
\end{lemma}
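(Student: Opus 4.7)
My plan is to reduce the statement to Lemma~\ref{LemmaG^Batmost2} by showing that every non-identity element of $G_B^B$ has at most two fixed points on $B$. I split into two cases, according to whether the dihedral hypothesis is on $G_B^B$ or on $G_B$.

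First suppose $G_B^B\cong D_{2n}$ is dihedral. Flag-transitivity forces $G_B^B$ to be transitive on $B$ (Lemma~\ref{LemmaFlagtrProperties}(b)), and since the design is non-trivial we have $k\geq 3$, so Lemmas~\ref{Lemma1orF} and \ref{LemmaDihedralAction} are directly applicable. By Lemma~\ref{Lemma1orF}, the point stabilizer $(G_B^B)_x$ for $x\in B$ is either trivial, in which case $G_B^B$ is regular on $B$ and non-identity elements have no fixed points, or conjugate to $F\cong Z_2$, in which case Lemma~\ref{LemmaDihedralAction} yields at most one fixed point if $|N|$ is odd and at most two if $|N|$ is even. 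In both situations the hypothesis of Lemma~\ref{LemmaG^Batmost2} is met, and $G_B$ acts faithfully on $B$.

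Next suppose $G_B$ is dihedral. Then $G_B^B$ is a homomorphic image of a dihedral group. If this image is itself dihedral, I am reduced to the previous case. Otherwise, I will use the elementary fact that any non-dihedral quotient of $D_{2m}$ is abelian: the derived subgroup $[D_{2m},D_{2m}]$ is generated by $r^2$ and has index $2$ or $4$, so every proper abelian quotient is cyclic of order at most $2$ or isomorphic to $Z_2\times Z_2$. A transitive faithful abelian permutation group is regular, so every non-identity element of $G_B^B$ again fixes no points of $B$, and Lemma~\ref{LemmaG^Batmost2} closes the argument.

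There is essentially no technical obstacle: all the heavy lifting has already been packaged into Lemmas~\ref{Lemma1orF}, \ref{LemmaDihedralAction}, and \ref{LemmaG^Batmost2}. The only thing I must supply is the structural observation that a homomorphic image of a dihedral group is either dihedral or abelian, which is immediate from the description of $[D_{2m},D_{2m}]$.
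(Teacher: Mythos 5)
Your proof is correct and follows essentially the same route as the paper: reduce to Lemma~\ref{LemmaG^Batmost2} via the fixed-point bounds of Lemma~\ref{LemmaDihedralAction} when $G_B^B$ is dihedral, and when $G_B$ is dihedral analyze the possible quotients $G_B^B$. Your treatment of the second case is in fact slightly more careful than the paper's, which asserts the quotient is ``dihedral or of order $2$'' and thereby overlooks the Klein four quotient; your observation that any non-dihedral quotient is abelian, hence regular on $B$, covers that possibility cleanly.
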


\begin{proof}
For the case $G_B^B$ is a dihedral group, the lemma immediately follows from Lemma \ref{LemmaDihedralAction} and \ref{LemmaG^Batmost2}.
If $G_B\cong \DD_{2n}$ and  is not faithful on $B$, then $G_B^B$ is isomorphic to a quotient group of $G_B\cong \DD_{2n}$. Thus $G_B^B$ is either dihedral or of order 2.
The transitivity of $G_B^B$ implies that $|G_B^B|\neq2$. Thus the lemma holds.
\end{proof}

\begin{lemma}\label{LemmaG_B=Gx}
If $G_B=N{:} F\cong \DD_{2n}$, then there exists $x\in\P\setminus B$ such that $G_B=G_x$.
Further, $|{\rm Fix}_{\P}(N)|\in\{1,3\}$, and if $|{\rm Fix}_{\P}(N)|=3$, then $(v,k,\lambda)=(4n-1,2n,n)$.
\end{lemma}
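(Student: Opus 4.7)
The plan is to analyze the cyclic normal subgroup $N\unlhd G_B$ acting on $\P$ and $\B$, exploiting the dihedral structure of $G_B$. Since $N=\langle\sigma\rangle$ is cyclic, Lemma \ref{LemmaSameFixed} gives $|{\rm Fix}_\P(N)|=|{\rm Fix}_\B(N)|\ge 1$, because $B$ is $N$-fixed. A key observation is that for any $B'\in{\rm Fix}_\B(N)$, $N$ is the unique cyclic subgroup of order $n$ in $G_{B'}\cong D_{2n}$ (using $n\ge 3$), so by Lemma \ref{Lemma1orF} and Lemma \ref{LemmaFaithfulG^B}, $N$ acts on $B'$ either regularly (when $k=n$) or with two orbits of size $n$ (when $k=2n$); in either case $N$ has no fixed point on $B'$, so ${\rm Fix}_\P(N)\cap B'=\emptyset$.

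The argument then splits on $k$. When $k=n$, $N$ is transitive on $B$, so for any $B'\in{\rm Fix}_\B(N)\setminus\{B\}$ the intersection $B\cap B'$ is $N$-invariant and hence $\emptyset$ or $B$, incompatible with $|B\cap B'|=\lambda$ and $0<\lambda<k$. Thus ${\rm Fix}_\B(N)=\{B\}$, $|{\rm Fix}_\P(N)|=1$, and the unique fixed point is $G_B$-fixed (since $N\unlhd G_B$), yielding $G_B=G_x$ by comparing orders. When $k=2n$, $B\cap B'$ must be a union of size-$n$ $N$-orbits on $B$, forcing $\lambda\in\{n,2n\}$; since $\lambda<2n$, either $\lambda\ne n$ (whence $|{\rm Fix}_\P(N)|=1$ and we finish as above) or we are in the Hadamard-type subcase $(v,k,\lambda)=(4n-1,2n,n)$.

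In the Hadamard subcase, $G_B^B$ is regular on $B$ and $o(\sigma)=n=\lambda$, so Lemma \ref{LemmaFixedPointsCore}(a) gives $|{\rm Fix}_\P(N)|\le 3$. To exclude $|{\rm Fix}_\P(N)|=2$: Lemma \ref{Lemma|H|lambda} applied to $G_B$ forbids $G_B$ from fixing both points of ${\rm Fix}_\P(N)$ (since $|G_B|=2n$ does not divide $\lambda=n$), and an elementary $G_B$-orbit argument on the $2$-element set ${\rm Fix}_\P(N)$ rules out fixing exactly one. So $G_B$ must swap the two points via some involution $\tau\in F$, which fixes both $N$-fixed blocks $B,B'$ but has no fixed point on $B$ or on $B'$ (by the regularity of $G_B^B$ and $G_{B'}^{B'}$). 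Combined with the swap of ${\rm Fix}_\P(N)$, this forces ${\rm Fix}_\P(\tau)\subseteq\P\setminus(B\cup B'\cup{\rm Fix}_\P(N))$, a set of size $n-3$. Since $\tau$ has at least $2$ fixed blocks, Lemma \ref{LemmaSameFixed} gives $|{\rm Fix}_\P(\tau)|\ge 2$, which Lemma \ref{LemmaInvolutionFixSym} upgrades to $|{\rm Fix}_\P(\tau)|\ge 3$. A detailed analysis of the $N$-orbit structure inside this $(n-3)$-element set---using the parity constraint that $|B\cap B''|=n$ must be even for any third $\tau$-fixed block $B''$ (since $\tau$-orbits on $B$ all have size $2$) and the inversion action of $\tau$ on each $N$-orbit---produces the required contradiction.

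Once $|{\rm Fix}_\P(N)|\in\{1,3\}$ is established, the induced $F\cong Z_2$ action on the odd-size set ${\rm Fix}_\P(N)$ must have a fixed point $x\in\P\setminus B$, giving $G_B\le G_x$, hence $G_B=G_x$ by order. The hardest step will be the uniform combinatorial exclusion of $|{\rm Fix}_\P(N)|=2$ in the Hadamard-type subcase, where the delicate interplay between $\tau$'s fixed-point count forced by Lemma \ref{LemmaInvolutionFixSym} and the narrow $N$-orbit configurations available in the $(n-3)$-element complement must be pushed through uniformly in $n$.
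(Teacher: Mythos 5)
Your skeleton up to the final step matches the paper's: you show $N$ has no fixed point on any $N$-fixed block, rule out a second fixed block when $k=n$, deduce $\lambda=n$ and $(v,k,\lambda)=(4n-1,2n,n)$ with $G$ flag-regular when $k=2n$, and bound $|{\rm Fix}_{\P}(N)|\le 3$ via Lemma \ref{LemmaFixedPointsCore}(a). All of that is sound. The problem is the exclusion of $|{\rm Fix}_{\P}(N)|=2$, which is essential for both assertions of the lemma and which you do not actually prove: your argument ends with ``a detailed analysis of the $N$-orbit structure \dots produces the required contradiction,'' and the two ingredients you name do not close the case. The parity constraint (a third $\tau$-fixed block $B''$ forces $|B\cap B''|=n$ to be even) only eliminates odd $n$; but $n=\lambda$ is forced to be even anyway (cf.\ Lemma \ref{LemmaLambdaEven}), so the case you must handle --- $n$ even, $|{\rm Fix}_{\P}(\tau)|$ odd and at least $3$, all fixed points inside a set of size $n-3$ --- is exactly the one left open, and no contradiction is exhibited there for general even $n$. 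As written this is a gap, not a proof.

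The paper closes this case by a much shorter argument that works with the point stabilizer rather than with $\tau$: if ${\rm Fix}_{\P}(N)=\{x,y\}$, then $N\le G_x$ and $|G_x|=|G_B|=2n$ gives $N\unlhd G_x$, so $G_x$ preserves $\{x,y\}$ and, fixing $x$, must also fix $y$; hence $G_x$ stabilizes the set of $\lambda$ blocks through both $x$ and $y$, a nonempty proper subset of the $r=k$ blocks through $x$, contradicting the transitivity of $G_x$ on $\D(x)$ (Lemma \ref{LemmaFlagtrProperties}(c)). You should replace your unfinished fixed-point count of $\tau$ by this observation; the rest of your argument (including the final parity step ``$F$ has a fixed point on the odd-size set ${\rm Fix}_{\P}(N)$, hence $G_B=G_x$'') then goes through.
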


\begin{proof}
Consider the fixed points of $N\unlhd G_B=N{:} F$.
Since $N$ is cyclic, $N$ has the same fixed points and fixed blocks as its generator.
By Lemma \ref{LemmaSameFixed}, $N$ fixes an equal number of points and blocks.
As $N$ fixes block $B$, $N$ fixes at least one point, say $x$.
Since $|G_x{:}N|=|G_B{:}N|=2$, we have
$N\unlhd G_x$ and $N\unlhd G_B$. For any $g\in G_x$ or $g\in G_B$, we have
$$({\rm Fix}_{\P}(N))^g={\rm Fix}_{\P}(N^g)={\rm Fix}_{\P}(N),$$
Hence $G_x$ and $G_B$ set-wisely fixes ${\rm Fix}_{\P}(N)$.

If $N$ fixes exactly one point $x$, then $G_B$ fixes $x={\rm Fix}_{\P}(N)$.
This follows that $G_B\leq G_x$. As $|G_B|=|G_x|$, we have $G_B=G_x$ and the lemma follows.

Suppose $N$ fixes more than one point. Then $N$ fixes at least two blocks.
Let $B'$ be a fixed block of $N$ other than $B$.
Now $N$ set-wisely fixes $B\cap B'$ and $N\leq G_{B'}$.
Note that $G_{B'}$ is conjugate to $G_B\cong \DD_{2n}$ in $G$.
So $N$ is also a normal, semi-regular subgroup of $G_{B'}$ on $B'$.
This implies that $|N|$ divides $|B\cap B'|=\lambda$.
From Lemma \ref{Lemma1orF} we get that $n=|N|=k$ or $\frac{k}{2}$.
Combining this with $k>\lambda$, we have $n=|N|=\frac{k}{2}$ and so $\lambda=\frac{k}{2}$.
Then $v=\frac{k(k-1)}{\lambda}+1=4n-1$.
In this case, $|G_B|=2|N|=k$ and Lemma \ref{LemmaDefFlagregular} implies that $G$ is flag-regular.
Now apply Lemma \ref{LemmaFixedPointsCore} and we conclude that $N$ fixes exactly two or three points.

If $N$ fixes exactly two points, say $x$ and $y$, then $G_x$ set-wisely fixes ${\rm Fix}_{\P}(N)=\{x,y\}$ and thus $G_x$ also fixes $y$.
We have $G_x=G_y=G_x\cap G_y$.
By Lemma \ref{LemmaFlagtrProperties} (c), the flag-transitivity implies that $G_x$ acts transitively on the blocks through $x$.
But $G_x$ set-wisely fixes the $\lambda$ blocks through $x$ and $y$, which forces $k=\lambda$, a contradiction.

Hence, $N$ fixes exactly three points, say $x$, $y$ and $z$, then similar to the argument above, $G_B$ set-wisely fixes $\{x,y,z\}$.
As the complement $F$ of $N$ in $G_B$ is an involution, $F$ fixes either one point or all three points in $\{x,y,z\}$.
If $F$ fixes each one of $\{x,y,z\}$, then $G_B=N{:} F$ fixes each one of $\{x,y,z\}$.
We deduce that $|G_B|=k$ divides $\lambda$ by Lemma \ref{Lemma|H|lambda}, a contradiction.
If $F$ fixes exactly one point in this set, say $x$, then $G_B=N{:} F$ fixes $x$.
This forces $G_B\leq G_x$ and thus $G_B=G_x$, so the proof is completed.
\end{proof}

\begin{lemma}\label{LemmaLambdaEven}
If $G_B=N{:} F\cong \DD_{2n}$, then $\lambda$ is even.
\end{lemma}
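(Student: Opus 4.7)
The plan is to argue by contradiction, assuming $\lambda$ is odd. By Lemma~\ref{LemmaG_B=Gx} there exists $x\in\P\setminus B$ with $G_B=G_x$ and $|{\rm Fix}_\P(N)|\in\{1,3\}$; by Lemma~\ref{LemmaFaithfulG^B} the action is faithful, so $G_B^B\cong G_B\cong D_{2n}$, and by Lemma~\ref{Lemma1orF} either $k=n$ or $k=2n$. The main tool will be Lemma~\ref{Lemma|H|lambda}: whenever every non-identity element of $G_B^B$ fixes at most one point on $B$ and a subgroup $H\leq G$ fixes two points of $\P$, $|H|$ divides $\lambda$; applied to an involution this gives $2\mid\lambda$, the desired contradiction.

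I would split on whether the hypothesis of Lemma~\ref{Lemma|H|lambda} holds on $B$. By Lemma~\ref{LemmaDihedralAction}, the only obstruction is the subcase $k=n$ with $n$ even, where involutions in the $F$-class fix two points of $B$. In every other subcase---either $k=2n$ (so $G_B^B$ is regular on $B$) or $k=n$ with $n$ odd (Frobenius action, each non-central involution fixing exactly one point)---I take the involution $\tau\in F\leq G_B=G_x$. Then $\tau$ fixes $x\notin B$, so Lemma~\ref{LemmaInvolutionFixSym} in its second case (valid since $\lambda$ is odd) gives $|{\rm Fix}_\P(\tau)|\geq (k-1)/\lambda+1\geq 2$, using $\lambda\leq k-1$ from non-triviality. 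Lemma~\ref{Lemma|H|lambda} then yields $2\mid\lambda$, a contradiction.

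The remaining subcase is $k=n$ with $n$ even. Here I would switch to the central involution $\sigma$ of $G_B\cong D_{2n}$, namely the unique involution in $N$, which fixes $x$ because $\sigma\in N$ and $N$ fixes $x$. A short coset-action computation shows $\sigma$ has no fixed point on $B$: a coset $gF$ is $\sigma$-fixed only if $\sigma\in gFg^{-1}$, yet when $n$ is even the central involution $\sigma$ is not $D_{2n}$-conjugate to the non-central involution generating $F$. Since $\sigma$ fixes $x$, Lemma~\ref{LemmaInvolutionFixSym} yields $|{\rm Fix}_\P(\sigma)|\geq 2$, producing some $w\neq x$ fixed by $\sigma$; and $w\notin B$ because $\sigma$ has no fixed point on $B$.

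To finish, I would pass to blocks: let $S=\D(x)\cap\D(w)$, so $|S|=\lambda$ is odd. The involution $\sigma$ acts on $S$, hence fixes some $B'\in S$. Since $x\in B'$ but $x\notin B$, we have $B'\neq B$, and $\sigma$ acts on $B\cap B'$, a set of size $\lambda$ by the symmetry of $\D$. But $B\cap B'\subseteq B$ and $\sigma$ has no fixed point on $B$, so $\sigma$ acts without fixed points on a set of odd size $\lambda$, which is impossible for an involution. The hard part is precisely this residual subcase, where Lemma~\ref{Lemma|H|lambda} does not apply directly; the workaround is to produce a second $\sigma$-fixed block $B'$ through $x$ (using the odd size of $\D(x)\cap\D(w)$) and then exploit the parity of the intersection $|B\cap B'|=\lambda$.
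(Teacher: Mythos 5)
Your proof is correct and follows essentially the same route as the paper's: in the cases where involutions fix at most one point of $B$ you combine Lemma~\ref{LemmaInvolutionFixSym} with Lemma~\ref{Lemma|H|lambda} to force $2\mid\lambda$, and in the remaining case you use the central involution of $N$, which is fixed-point-free on $B$, to get $2\mid|B\cap B'|=\lambda$. The only cosmetic differences are that the paper splits on the parity of $n$ rather than on whether Lemma~\ref{Lemma|H|lambda} applies, and it produces the second fixed block $B'$ directly via Lemma~\ref{LemmaSameFixed} instead of through a second fixed point $w$ and the odd set $\D(x)\cap\D(w)$.
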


\begin{proof}
Suppose that $|N|=n$ is odd. According to Lemma \ref{LemmaDihedralAction}, each  non-identity element of $G_B^B$ fixes at most one point on $B$.
The involution $g$ in $F$ fixes $B$, by Lemmas \ref{LemmaSameFixed} and \ref{LemmaInvolutionFixSym}, $g$ fixes at least two blocks (and two points).
From Lemma \ref{Lemma|H|lambda} we have $|F|=2$ divides $\lambda$.

Suppose that $|N|=n$ is even.
There is an involution $g$ in $N$, which acts semi-regularly on $B$.
Similarly, $g$ fixes at least two blocks. Let $B'$ is a fixed block of $g$ other than $B$.
Then $\langle g\rangle$ acts semi-regularly on $B\cap B'$, which follows that $|\langle g\rangle|=2$ divides $|B\cap B'|=\lambda$.
\end{proof}

In the following we deal with the case that $G$ is point-primitive.

\begin{lemma}\label{LemmaGaffine}
If $G_B=N{:} F\cong \DD_{2n}$ and $G$ is point-primitive, then $G$ is of affine type.
\end{lemma}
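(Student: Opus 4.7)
The plan is to import the dihedral hypothesis from $G_B$ to the point stabilizer and then invoke the primitive classification of Theorem \ref{ProLCH}, ruling out every almost simple possibility by design arithmetic. By Lemma \ref{LemmaG_B=Gx}, there exists $x \in \P \setminus B$ with $G_x = G_B \cong D_{2n}$, so $G$ is a point-primitive permutation group with a dihedral point stabilizer. Theorem \ref{ProLCH} then says ${\rm soc}(G)$ is either elementary abelian, so $G$ is of affine type (the desired conclusion), or non-abelian simple, with $(G, {\rm soc}(G), G_x)$ appearing in Table \ref{TablePriGroupDihedralSta}. It therefore suffices to eliminate every almost simple row of that table.

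For this, I will use the following constraints on the putative symmetric design. The degree $v = |G|/|G_x|$ and the parameter $n = |G_x|/2$ are explicit in each row. Lemma \ref{LemmaFaithfulG^B} yields $G_B^B \cong G_B \cong D_{2n}$, so Lemma \ref{Lemma1orF} forces $k \in \{n, 2n\}$; and Lemmas \ref{LemmaParameters}\,(a$'$), (d$'$) together with \ref{LemmaLambdaEven} and the standing assumption $\lambda > 1$ require $\lambda = k(k-1)/(v-1)$ to be a positive even integer satisfying $\lambda v < k^2$ and $2 < k < v - 1$. For the infinite families $\PSL_2(q)$, $\PGL_2(q)$ and $\Sz(q)$, substituting the formulas for $v$ and $n$ makes the integrality of $\lambda$ reduce to a linear divisibility condition in $q$ of the form ``$q - c$ divides $m$'' for small fixed constants $c$ and $m$, leaving only finitely many prime powers to test; each survivor is then eliminated by one of the inequalities above or by checking the parity of $\lambda$. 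The sporadic rows ($A_5$, $S_5$, $\PSL_3(2).2$ and $^2G_2(3)'$) are checked directly against these inequalities for $k \in \{n, 2n\}$.

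The main obstacle will be keeping the bookkeeping clean across the four $\PSL_2(q)$ and $\PGL_2(q)$ subcases indexed by $(k, n) = (n, (q \pm 1)/d)$ and $(2n, (q \pm 1)/d)$ with $d = \gcd(2, q-1)$. The key simplification is that in every such subcase, $v - 1$ factors through one of $q \pm 1$, which cancels against a factor of $k$ or $k - 1$ and produces a rational function of $q$ whose denominator is linear. With this uniform reduction, the residual divisibility problems in each subcase are elementary, and one need only verify that any small $q$ which survives has already been disposed of among the sporadic rows of Table \ref{TablePriGroupDihedralSta} via the isomorphisms recorded in Remark \ref{RemarkOfTable1}.
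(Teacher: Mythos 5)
Your opening move coincides with the paper's: apply Lemma \ref{LemmaG_B=Gx} to transfer the dihedral hypothesis to a point stabilizer, then invoke Lemma \ref{LemmaDihedralAffAlmost} / Theorem \ref{ProLCH} to reduce to the almost simple rows of Table \ref{TablePriGroupDihedralSta}. Where you diverge is the elimination step: the paper disposes of the socles $\PSL_2(q)$ and $\Sz(q)$ by citing the existing classifications of flag-transitive, point-primitive symmetric designs with those socles (\cite{AlaviPSL4} and \cite{AlaviExceptional}), whereas you propose a self-contained arithmetic sieve using $k\in\{n,2n\}$, integrality and parity of $\lambda=k(k-1)/(v-1)$, and the inequalities $\lambda v<k^2$, $2<k<v-1$.

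That sieve does not close. Run it on the row $G=\PGL_2(7)$ (equivalently $\PSL_3(2).2$) with $G_x=D_{16}$, the $\mathcal{C}_3$-case $D_{2(q+1)}$ for $q=7$: here $v=336/16=21$, $n=8$, and the choice $k=2n=16$ gives $\lambda=16\cdot 15/20=12$, which is a positive \emph{even} integer with $\lambda v=252<256=k^2$ and $2<16<20$. Your divisibility computation confirms this survivor rather than killing it: for $k=2(q+1)$ one gets $q-2\mid 20$, and $q=7$ is an admissible odd prime power. None of the constraints you list (nor, as far as I can check, the fixed-point lemmas of Sections \ref{SecPre}--\ref{SecProof1}: the central involution of $D_{16}$ fixes $5$ points, a reflection fixes $3$, the cyclic subgroup $N\cong Z_8$ fixes $1$, all consistent) rules out a putative flag-transitive symmetric $2$-$(21,16,12)$ design for this group. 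Excluding it genuinely requires extra input — e.g.\ observing that such a design is the complement of a projective plane of order $4$, hence of the unique $\PG(2,4)$, and then showing $\PGL_2(7)\not\leq\PGammaL_3(4)$ (the outer automorphism of $\PSL_2(7)\cong\PSL_3(2)$ is the graph automorphism, which no point-preserving collineation of $\PG(2,4)$ can induce on the Baer subplane) — or simply citing \cite[Table 1]{AlaviPSL4} as the paper does. As written, your proof is incomplete at exactly this case; the remaining rows (including $\Sz(q)$, which dies by Lemma \ref{LemmaCubic} or by $k^2<v$) do fall to your arithmetic.
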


\begin{proof}
By Lemma \ref{LemmaG_B=Gx}, we know that $G_x=G_B\cong \DD_{2n}$ for some $x\in\P\setminus B$.
Then we only need to show that the socle $T$ of $G$ is not non-abelian simple by  Lemma \ref{LemmaDihedralAffAlmost}.
If $T$ is non-abelian simple, then according to Theorem \ref{ProLCH} and Remark \ref{RemarkOfTable1}, $T$ is isomorphic to $\PSL_2(q)$ or $\Sz(q)$.
Flag-transitive, point-primitive symmetric designs with these two types of socles are classified recently.
If $T=\PSL_2(q)$, then the groups with such socle and the corresponding point stabilizers are listed in \cite[Table 1]{AlaviPSL4}. None of these has a dihedral point stabilizer.
For $T=\Sz(q)$,
it is also shown in \cite{AlaviExceptional} that no symmetric designs admit a flag-transitive, point-primitive automorphism group with socle of Suzuki type.
\end{proof}

%

\begin{lemma}\label{LemmaGaffineOrbitsNum}
Let $\D$ and $G$ be as in Lemma {\rm\ref{LemmaGaffine}}.
Then $N$ acts semi-regularly on $\P\setminus\{x\}$ and has odd order.
Moreover, if $m$ is the number of fixed points of $F$, and $a$, $c$ are the number of suborbits of length $n$ and $2n$, respectively,
then

$$
a = m-1,\qquad
c = \begin{cases}
\displaystyle\frac{k-1}{2\lambda} - \frac{m-1}{2}, & \text{if } k = n,\\[10pt]
\displaystyle\frac{k-1}{\lambda} - \frac{m-1}{2}, & \text{if } k = 2n.
\end{cases}
$$
\end{lemma}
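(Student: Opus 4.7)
The plan is to exploit the affine structure. Since $G$ is point-primitive of affine type, I would write $G = V{:}G_x$ where $V = {\rm soc}(G) \cong Z_p^d$ is the elementary abelian regular normal subgroup and $G_x \cong D_{2n}$ acts faithfully and irreducibly on $V$. After identifying $\mathcal{P}$ with $V$ so that $x$ corresponds to $0$, the fixed points on $\mathcal{P}$ of any subgroup $H \leq G_x$ form the linear subspace $C_V(H)$. Every subgroup of the cyclic group $N$ is normal in $D_{2n}$, so for each $1 \neq H \leq N$ the subspace $C_V(H)$ is $G_x$-invariant, forcing $C_V(H) = 0$ by irreducibility together with the faithfulness of $G_x$ on $V$. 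Applied to the cyclic subgroups generated by each non-identity element of $N$, this yields the semi-regularity of $N$ on $\mathcal{P} \setminus \{x\}$. If $|N|$ were even, $N$ would contain an involution $t$ with ${\rm Fix}_\mathcal{P}(t) = \{x\}$; this would contradict Lemma \ref{LemmaInvolutionFixSym}, which gives $|{\rm Fix}_\mathcal{P}(t)| \geq (k-1)/\lambda + 1 > 1$ since $k > 2$. Hence $n$ is odd.

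Next I would classify the lengths of $G_x$-orbits on $\mathcal{P} \setminus \{x\}$. By the semi-regularity of $N$, no stabilizer of a point $y \neq x$ can meet $N$ nontrivially. Since $n$ is odd, the only subgroups of $D_{2n}$ having trivial intersection with $N$ are $\{1\}$ and the conjugates of $F$ (the $n$ involutions all lie outside $N$ and form a single $G_x$-class). Therefore the orbits of $G_x$ on $\mathcal{P} \setminus \{x\}$ have length either $2n$ (trivial stabilizer) or $n$ (stabilizer a conjugate of $F$). To count fixed points of $F$, I would use that $F$ is self-normalizing in $D_{2n}$ for $n$ odd: the reflection has $n$ distinct conjugates, so $|C_{D_{2n}}(s)| = 2$ and $N_{G_x}(F) = F$. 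A standard orbit-counting argument then shows each orbit of length $n$ contributes exactly $|N_{G_x}(F):F| = 1$ point to ${\rm Fix}_\mathcal{P}(F)$. Together with the fixed point $x$ itself, this gives $m = a + 1$, hence $a = m - 1$.

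The final step is to combine the partition identity $v - 1 = an + 2nc$ with the symmetric-design relation $v - 1 = k(k-1)/\lambda$ from Lemma \ref{LemmaParameters}(a$'$) and substitute the two possible values $k \in \{n, 2n\}$ provided by Lemma \ref{Lemma1orF}; each case immediately yields the stated formula for $c$. The only genuinely delicate point in the argument is the orbit-length classification, which rests on the normality of every subgroup of $N$ in $G_x$ (a feature specific to $n$ odd). Given this, all remaining manipulations are routine.
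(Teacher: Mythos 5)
Your proposal is correct and follows essentially the same route as the paper's proof: semi-regularity of $N$ from the normality of its subgroups in $G_x$ combined with point-primitivity, oddness of $n$ via the involution fixed-point bound of Lemma \ref{LemmaInvolutionFixSym}, the orbit-length dichotomy $n$ or $2n$ from the structure of point stabilizers meeting $N$ trivially, the count of one fixed point of $F$ per length-$n$ orbit from $N_{G_x}(F)=F$, and the same concluding arithmetic with $v-1=an+2nc$ and $k\in\{n,2n\}$. The only cosmetic difference is that you derive semi-regularity from the invariant subspace $C_V(H)$ and irreducibility, whereas the paper argues via core-freeness of $G_x\cap G_y$ and faithfulness of $G_x$ on its non-trivial orbits; these are the same fact in different language.
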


\begin{proof}
As $G$ has a normal regular subgroup, from \cite[Exercise 4.4.8]{Dixon} we know that $G_x$ acts faithfully on each of its non-trivial orbits.
For any $y\in\P\setminus\{x\}$, $G_x\cap G_y$ is a core-free subgroup of $G_x$, so $G_x\cap G_y$ is either $\{1\}$ or conjugate to $F$.
This means each non-trivial orbit of $G_x$ has length $n$ or $2n$.
By Lemma \ref{LemmaInvolutionFixSym}, any involution in $N$ fixes more than one point, a contradiction.
So $N$ cannot contain any involution and thus $N$ has odd order.
Now $G_x=G_B\cong \DD_{2n}$ is a Frobenius group.
Note that each involution of $G_x$ fixes exactly one point on the orbits of length $n$.
This yields $a=m-1$ and
$$c=\frac{v-1-(m-1)n}{2n}=\frac{k(k-1)-\lambda(m-1)n}{2n\lambda}.$$
Combining this with $k=n$ or $2n$ (see Lemma \ref{Lemma1orF}) and we have the result.
\end{proof}

\begin{lemma}\label{LemmaGknotn}
If $G_B=N{:} F\cong \DD_{2n}$, then $G$ is not point-primitive.
\end{lemma}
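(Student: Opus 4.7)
My plan is to assume for contradiction that $G$ is point-primitive and to derive a contradiction by combining the orbit-count formula of Lemma~\ref{LemmaGaffineOrbitsNum} with Clifford theory. By Lemma~\ref{LemmaGaffine}, $G$ is of affine type; identifying $\P$ with $V=\mathbb{F}_p^d$, we have $G=V\rtimes G_x$ with $G_x=D_{2n}$ acting faithfully and irreducibly on $V$. By Lemma~\ref{LemmaGaffineOrbitsNum}, $n$ is odd and $N$ is semi-regular on $V\setminus\{0\}$; since any nontrivial $p$-element of $\mathrm{GL}(V)$ is unipotent and hence fixes a nonzero vector, this forces $p\nmid n$. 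Lemmas~\ref{LemmaLambdaEven} and~\ref{Lemma1orF} give that $\lambda$ is even and $k\in\{n,2n\}$.

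Writing $m=|\mathrm{Fix}_V(F)|=p^e$, I would first impose the integrality of $c$ in the formula of Lemma~\ref{LemmaGaffineOrbitsNum}. Combined with $\lambda$ even, a short parity computation yields: in the case $k=2n$, $m-1$ must be odd, whence $p=2$; in the case $k=n$, one obtains $\lambda\mid n-1$ together with $(n-1)/\lambda\equiv m-1\pmod{2}$. Since $\gcd(n,p)=1$, Clifford's theorem applies to $N\lhd G_x$: $V|_N$ is semisimple and $\langle F\rangle=G_x/N$ permutes the $N$-isotypic components transitively, giving either Case~(A), a single isotypic component $V|_N\cong U^s$ with $U$ an $N$-irreducible of $\mathbb{F}_p$-dimension $a$, or Case~(B), $V=V_1\oplus V_2$ with non-isomorphic $N$-irreducibles swapped by $F$.

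In Case~(A), $F$ acts on $V\cong\mathbb{F}_{p^a}^s$ as a $\sigma$-semilinear involution inverting scalar multiplication by a generator of $N$, which forces $a$ even and $n\mid p^{a/2}+1$; by Hilbert~90 applied to $\mathrm{GL}_s(\mathbb{F}_{p^a})$, every such $F$ is conjugate to the standard Frobenius conjugation, whose fixed $\mathbb{F}_{p^{a/2}}$-form is a proper $F$-invariant subspace when $s\geq 2$, contradicting $G_x$-irreducibility. Hence $s=1$, $V\cong\mathbb{F}_{p^a}$, and $m=p^{a/2}=\sqrt{v}$. In Case~(B), the analogous argument forces $V_1$ and $V_2$ to be $N$-irreducible, giving $d=2a$, $n\mid p^a-1$ and again $m=p^a=\sqrt{v}$. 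Substituting $m=\sqrt{v}$ into the formulas for $c$ and $\lambda=k(k-1)/(v-1)$, and writing $\sqrt{v}\pm 1=nt$ for a positive integer $t$, the simultaneous integrality of $c$ and $\lambda$ reduces to a divisibility of the form $t^2(\sqrt{v}\mp 1)\mid P(\sqrt{v},t)$ with $P$ a small polynomial; an elementary bound then forces $t=1$, which yields either $c=0$ (so there is no length-$2n$ orbit to serve as a block when $k=2n$, respectively no length-$n$ orbit when $k=n$) or a non-integral $\lambda$, the desired contradiction.

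The principal obstacle is the Hilbert~90 step ruling out Case~(A) with $s\geq 2$, which relies on the vanishing of $H^1$ for $\mathrm{GL}_s$ with coefficients in a finite field. A more subtle point is that the biplane parameters $(v,k,\lambda)=(16,6,2)$ with $n=3$ formally satisfy the Diophantine condition in Case~(B); however the $F$-conjugate of $\mathbb{F}_4$ is itself Galois-conjugate to $\mathbb{F}_4$ over $\mathbb{F}_2$, so the two $N$-irreducibles coincide and this configuration actually falls under Case~(A), where the Diophantine argument produces the contradiction.
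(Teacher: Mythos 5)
Your overall strategy is sound and genuinely different from the paper's. The paper, after reducing to the affine case with $k\in\{n,2n\}$ and $n$ odd, splits on the characteristic: for $p=2$ it invokes Mann's theorem on difference sets in elementary abelian $2$-groups to force $v=16$ and then eliminates $(16,10,6)$ by a {\sc Magma} computation; for $p$ odd with $k=n$ it uses the Lander fixed-point bound to get $c=0$ and then Wielandt's result on $\tfrac32$-transitive Frobenius regular groups; for $p$ odd with $k=2n$ it uses a parity argument to get $k\mid v-1$ and hence $\lambda$ odd, contradicting Lemma~\ref{LemmaLambdaEven}. Your route instead extracts the single structural fact $|{\rm Fix}_V(F)|=\sqrt{v}$ (with $n\mid\sqrt{v}\pm1$) from Clifford theory and Hilbert~90, and then closes everything by the integrality and nonnegativity of $c$ together with $\lambda\ge 2$. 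I checked the endgame: in the case $k=n$ one gets $n\le (n-1)/\lambda+2$, forcing $n=3$, $v=4$, a trivial design; in the case $k=2n$ the bound $c\ge0$ forces $t=1$ and leaves only $(v,k,\lambda)=(16,10,6)$ (killed because $c=0$ while the block must be a $G_x$-orbit of length $2n$, by Lemmas~\ref{LemmaG_B=Gx} and~\ref{LemmaFlagtrProperties}) and $(16,6,2)$ with $n=3$ (which, as you correctly observe, cannot occur in Case~(B) since $-1\equiv 2\pmod 3$, and in Case~(A) forces $v=4$). This is more uniform than the paper's proof and, pleasantly, removes both the appeal to Mann's theorem and the {\sc Magma} verification of $(16,10,6)$.

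One step is wrongly justified, however. To eliminate $s\ge2$ in Case~(A) you say that after normalizing $F$ to the standard Frobenius its fixed $\GF_{p^{a/2}}$-form $\GF_{p^{a/2}}^s$ is a proper $F$-invariant subspace contradicting irreducibility. That subspace is \emph{not} $N$-invariant ($N$ acts by multiplication by $\zeta\notin\GF_{p^{a/2}}$), so it does not violate $G_x$-irreducibility; worse, the same ``proper $F$-invariant subspace'' exists when $s=1$, so the argument as written would prove too much. The correct observation is that once $F$ is the coordinatewise semilinear Frobenius on $\GF_{p^a}^s$, each $\GF_{p^a}$-coordinate line is invariant under both $N$ (scalar action) and $F$, giving a proper $G_x$-submodule precisely when $s\ge2$. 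With that repair (the Hilbert~90 normalization is still needed, both here and to compute $m=p^{a/2}$ when $s=1$), your proof goes through.
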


\begin{proof}
Assume that $G$ is point-primitive. By Lemma~\ref{LemmaGaffine}, $G$ is of affine type. Then $G=\ZZ_p^d{:}\DD_{2n}$, where $p$ is prime.
By Lemmas \ref{Lemma1orF} and \ref{LemmaGaffineOrbitsNum}, we have $k\in\{n, 2n\}$ and $n$ is odd.
Hence $4 \nmid k$.

We first treat the case $p=2$. As $G$ possesses an elementary abelian, point-regular normal subgroup, $\D$ naturally induces an elementary abelian difference set.
By a result \cite[Theorem 1]{MannDiffenceSet} on elementary abelian difference sets, we have that $\mathcal{D}$ is a $2$-$(2^{2m},2^{2m-1}-2^{m-1},2^{2m-2}-2^{m-1})$ design or its complement design with parameters $(2^{2m},2^{2m-1}+2^{m-1},2^{m-1}+2^{2m-2})$, where $m>1$. Hence $2^{m-1} \mid k= 2^{2m-1} \pm 2^{m-1}$. Since $4 \nmid k$, we have $m=2$, and $(v,k,\lambda)=(16,6,2)$ or $(16,10,6)$. Then $k=2n$ and $\D$ is $G$-flag-regular.
Check \cite[Table 9.62]{Handbook} for primitive groups of degree 16 and we see that $G=\ZZ_2^4{:}\DD_{10}$, and so $(v,k,\lambda)=(16,10,6)$.
This case is ruled out by using {\sc Magma}.

Now consider the case $p$ is odd.
If $k=n$, then
by Lemmas \ref{LemmaG_B=Gx} and \ref{LemmaGaffineOrbitsNum}, the block $B$ is an orbit of some point stabilizer $G_x$ of length $n$ with $k=n$ odd.
It then follows from Lemma \ref{LemmaInvolutionFixSym}~(b) that $|{\rm Fix}_{\P}(F)|\geq\frac{k-1}{\lambda}+1$, so
$$0\leq c=\frac{k-1}{2\lambda}-\frac{|{\rm Fix}_{\P}(F)|-1}{2}\leq \frac{k-1}{2\lambda}-\frac{k-1}{2\lambda}=0.$$
This means that $a=\frac{k-1}{\lambda}$ and $c=0$.
So all non-trivial orbits of $G_x$ have length $n$, which is odd.
Now $G$ is a $\frac{3}{2}$-transitive Frobenius regular group, where $G_x$ acts on each of its non-trivial orbits in its unique Frobenius representation.
Since $G$ is point-transitive and $|G|$ is even, by \cite[Exercise 3.13]{Wielandt} we get that $v=p^d$ is even, a contradiction.
If $k=2n$, then by Lemma~\ref{LemmaGaffineOrbitsNum}, we have
$$
v-1=p^d-1=a \cdot n+ c \cdot 2n=(a+2c)\cdot n.
$$
Since $p^d-1$ is even and $n$ is odd, we have $a+2c$ is even, and hence $a$ is even. Then
$$v-1=(\frac{a}{2}+c)\cdot 2n=(\frac{a}{2}+c)\cdot k$$ and $k \mid v-1$. By the equation $\lambda(v-1)=k(k-1)$, we have $\lambda \mid k-1=2n-1$. Hence $\lambda$ is odd, a contradiction to Lemma~\ref{LemmaLambdaEven}.
\end{proof}

\noindent{\bf Proof of Theorem \ref{ThmSym}}\,\,
First consider that $K$ is dihedral.
The equivalence of parts (b) and (d) follows from Lemma \ref{LemmaFaithfulG^B}.
Part (b) implying part (e) is from Lemma \ref{LemmaG_B=Gx} and the flag-transitivity of $G$.
Part (e) implying part (b) follows from $G_x\leq G_{\Delta}$ and $|G_x|=|G_{\Delta}|$, as $\Delta$ is a block.
Thus, (b), (d) and (e) are equivalent.
The equivalence of parts (a) and (e), also, (a) and (c) are directly obtained by applying the proof to the dual structures of
symmetric designs.
Hence, (a)--(e) are equivalent.
Further, if one of {\rm (a)--(e)} holds, then $G$ is imprimitive on $\P$ by Lemma \ref{LemmaGknotn}.
Notice that $G_x$ and $G_B$ are conjugate in $G$, so the action of $G$ on $\B$ is equivalent to that on $\P$, and hence $G$ is also imprimitive on $\B$.

If $K$ is abelian, then the equivalence of parts {\rm (a)--(e)} follows from Proposition \ref{PropLocallyAbelian}.
Assume that one of {\rm (a)--(e)} holds. Then $G_x$ and $G_B$ are conjugate in $G$, and $G_x\cong G_B\cong G_B^B\cong G_x^{\D(x)}\cong K$.
If $G$ is primitive on $\P$, then $G$ is also primitive on $\B$.
By the fact that a primitive group with non-trivial abelian point stabilizers is a Frobenius group,
$G$ is clearly a Frobenius group.
Further, the abelian stabilizer of a primitive group is cyclic (see \cite[Theorem 1]{LiebeckCyclicStabilizer}). The conclusion holds by Proposition \ref{LemmaFrobeniusFlagregular}.
\hfill$\square$

\medskip

\section{Point-locally dihedral, point-primitive non-symmetric 2-designs}\label{pri2designs}

In this section we tackle the non-symmetric 2-designs, on the point-primitive case.
We make the following hypothesis.

\begin{hypothesis}
Suppose that $\D=(\mathcal{P},\mathcal{B})$ is a non-symmetric design and $G$ is a locally-transitive
(equivalently, flag-transitive) and point-primitive automorphism group of $\D$, with point stabilizers a dihedral group.
\end{hypothesis}

Note that under this hypothesis, the local action $G_x^{\D(x)}$ must be a dihedral group, whether this action is faithful or not.
We shall tackle the classification for almost simple groups $G$.
The proof is completed by analyzing all the cases in Table \ref{TablePriGroupDihedralSta}.

\begin{lemma}\label{LemmaDesignNotSuzuki}
If $G$ is almost simple, then $G$ has socle $\PSL_2(q)$.
\end{lemma}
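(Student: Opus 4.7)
My plan is to invoke the classification in Theorem \ref{ProLCH} to reduce to a finite list of possible almost simple socles, and then eliminate the Suzuki family using the order bound from Lemma \ref{LemmaCubic}. The non-symmetric assumption on $\D$ will play no role; the argument works for any flag-transitive $2$-design.

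By Theorem \ref{ProLCH}, if $G$ is almost simple with a dihedral point stabilizer, then the socle appears in Table~\ref{TablePriGroupDihedralSta}. Using the isomorphism identifications collected in Remark~\ref{RemarkOfTable1}, namely $A_5\cong\PSL_2(4)\cong\PSL_2(5)$, $S_5\cong\PGL_2(5)$, $\PSL_3(2)\cong\PSL_2(7)$, and $^2G_2(3)'\cong\PSL_2(8)$, every entry of the table except the Suzuki row has socle of the form $\PSL_2(q)$. So it suffices to rule out the case $\mathrm{soc}(G)=\Sz(q)$. In that case the table forces $G=\Sz(q)$ and $G_x\cong D_{2(q-1)}$, where $q=2^{2m+1}\geq 8$.

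For the elimination, I will simply compare $|G|$ with $|G_x|^3$. Since $G$ is flag-transitive on $\D$, Lemma~\ref{LemmaCubic} yields the strict bound $|G|<|G_x|^3$. On the other hand,
\[
|G|=q^2(q-1)(q^2+1)\qquad\text{and}\qquad |G_x|^3=8(q-1)^3,
\]
so the desired inequality would reduce to $q^2(q^2+1)<8(q-1)^2$. This clearly fails for every $q\geq 8$: the left-hand side grows like $q^4$ while the right-hand side grows like $8q^2$, and already at $q=8$ one has $64\cdot 65=4160>392=8\cdot 49$. This contradicts Lemma~\ref{LemmaCubic}, so $\mathrm{soc}(G)\neq\Sz(q)$, and the claim follows.

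I do not anticipate any real obstacle here: the heavy lifting was done in Theorem~\ref{ProLCH}, and the remaining content is a one-line arithmetic estimate. The only thing worth double-checking is that no secondary isomorphism places a Suzuki group inside the $\PSL_2$ family (it does not, since the orders differ), so that the reduction to the two socle families is genuinely a dichotomy.
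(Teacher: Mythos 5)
Your proposal is correct and follows essentially the same route as the paper: reduce via Theorem \ref{ProLCH} and Remark \ref{RemarkOfTable1} to the case $G=\Sz(q)$ with $G_x=D_{2(q-1)}$, then observe that $|\Sz(q)|=q^2(q^2+1)(q-1)$ violates the bound $|G|<|G_x|^3$ of Lemma \ref{LemmaCubic}. The arithmetic check is accurate and nothing further is needed.
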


\begin{proof}
According to Theorem \ref{ProLCH} and Remark \ref{RemarkOfTable1}, if the socle of $G$ is not $\PSL_2(q)$, then $G=\Sz(q)$ with $G_x=\DD_{2(q-1)}$.
But $|\Sz(q)|=q^2(q^2+1)(q-1)$, which contradicts Lemma \ref{LemmaCubic}.
\end{proof}

\begin{lemma}\label{Lemmaqeven}
Assume that $G$ is almost simple with socle $\PSL_2(q)$.
If $q$ is even, then one of the following holds:
\begin{enumerate}
\item[\rm(a)] $\D$ is in the family of the Witt-Bose-Shrikhande space with $G=\PSL_2(q)\,(q\geq8)$ and $G_x=\DD_{2(q+1)}$.

\item[\rm(b)] $\D$ is a $2$-$(6,3,2)$ design with $G=\PSL_2(4)\cong\PSL_2(5)\cong \AA_5$ and $G_x=\DD_{10}$.

\item[\rm(c)] $\D$ is a $2$-$(28,7,2)$ design with $G=\PSL_2(8)$ and $G_x=\DD_{18}$.

\item[\rm(d)] $\D$ is a $2$-$(36,6,2)$ design with $G=\PSL_2(8)$ and $G_x=\DD_{14}$.
\end{enumerate}
\end{lemma}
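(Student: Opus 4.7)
The plan is to split on the two possibilities $G_x=D_{2(q+1)}$ and $G_x=D_{2(q-1)}$ coming from Table \ref{TablePSL}. Since $q$ is even, $\PSL_2(q)=\PGL_2(q)$, and the case analysis in the proof of Lemma \ref{LemmaDihedralPSL2q} shows that no proper almost simple extension admits a dihedral stabiliser, so $G=\PSL_2(q)$. In either case $v=|G|/|G_x|=q(q\mp 1)/2$ is immediate.

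Next I describe the $G_x$-orbits on $\P\setminus\{x\}$: both actions of $G$ on $\P$ have rank $3$, since the normaliser of a non-split (respectively split) torus has exactly two non-trivial orbits on cosets, and I will record the two suborbit lengths $\ell_1,\ell_2$ explicitly as polynomials in $q$. Applying Lemma \ref{LemmaDavies} forces $r\mid\lambda\ell_i$ and $|\Gamma_i\cap B|=\lambda\ell_i/r$ for each non-trivial suborbit $\Gamma_i$. Combined with $r(k-1)=\lambda(v-1)$, $bk=vr$, and the non-triviality bound $2<k<v-1$, this leaves only finitely many candidate quadruples $(v,k,\lambda,r)$ for each $q$. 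Moreover, each block $B$ through $x$ is determined by the $(G_x)_B$-suborbit structure inside $\Gamma_1$ and $\Gamma_2$, which further restricts $(G_x)_B$ within the subgroup lattice of $D_{2(q\pm 1)}$.

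In the $D_{2(q+1)}$ case I expect the surviving parameter families to be exactly the Witt-Bose-Shrikhande family $(q(q-1)/2,\,q/2,\,1)$ for $q\geq 8$, together with the sporadic solutions $(6,3,2)$ at $q=4$ (the WBS being trivial there) and $(28,7,2)$ at $q=8$; in the $D_{2(q-1)}$ case only $(36,6,2)$ at $q=8$ should survive. For each surviving parameter set I then confirm existence and uniqueness of a $G$-flag-transitive design: the WBS family comes from the standard coset geometry on the set of non-split tori, and the three sporadic designs can be verified directly or by a short \textsc{Magma} computation. The principal obstacle is the arithmetic elimination for large $q$: pushing the integrality of $\lambda\ell_i/r$ together with Fisher's identity $\lambda(v-1)=r(k-1)$ hard enough to exclude all spurious parameter sets as $q\to\infty$, particularly in the $D_{2(q+1)}$ case where the constraints only become tight when combined with the subgroup structure of $G_x$.
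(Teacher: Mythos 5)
There is a genuine gap, and it starts with the claim that both actions have rank $3$. For $q=2^f\geq 8$ the action of $\PSL_2(q)$ on the cosets of $D_{2(q+1)}$ is a Frobenius regular action in which \emph{every} non-trivial suborbit has length $q+1$, so the number of non-trivial suborbits is $\frac{v-1}{q+1}=\frac{q}{2}-1$, which grows with $q$ (already $3$ suborbits for $q=8$, giving rank $4$); similarly the action on cosets of $D_{2(q-1)}$ has rank roughly $q/4$. So your framework of two explicit suborbit lengths $\ell_1,\ell_2$ and the ensuing ``$(G_x)_B$-suborbit structure inside $\Gamma_1$ and $\Gamma_2$'' collapses. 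More importantly, even with the correct suborbit data, the arithmetic you propose (Lemma \ref{LemmaDavies} plus Fisher-type identities) only pins down $|G_{xB}|\in\{1,2\}$ and bounds $\lambda\leq 9$ or so; it does not by itself eliminate the spurious parameter families for all large $q$, which is exactly the obstacle you flag at the end. The paper closes this by structural arguments specific to $\PSL_2(2^f)$: all involutions are conjugate and their fixed-point sets (coming from the various maximal subgroups $Z_2^f{:}Z_{q-1}$, $D_{2(q-1)}$) partition $\P$, which is then used to show that a putative block stabilizer of order $q/2$, $q-1$, $\tfrac32 q-2$ or $2q-3$ cannot sit properly inside any maximal subgroup.

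The second gap is that several of the surviving cases are not ``confirmed directly or by a short \textsc{Magma} computation'' but require imported classifications: the $\lambda=1$ case (your WBS family) is settled by the Buekenhout et al.\ classification of flag-transitive linear spaces, not by re-deriving it from coset geometry; the $2$-$(36,6,2)$ and $2$-$(6,3,2)$ cases come from the classification of flag-transitive $2$-designs with $\lambda=2$; and the $2$-$(28,7,2)$ case (where $(k,\lambda)=1$) is quoted from Zhan--Zhou. Without these inputs, or a replacement for them, your outline does not yield a proof.
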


\begin{proof}
Since $q$ is even, we have $G=\PSL_2(q)$ with $q=2^f>2$ and the dihedral stabilizer $G_x$ is a $\mathcal{C}_2$ or $\mathcal{C}_3$-subgroup in the Aschbacher-classes, as stated in Theorem \ref{ProLCH}.

By the classification of flag-transitive $2$-$(v,k,1)$ designs \cite{Buekenhout2vk1}, we observe that if  $G$ is almost simple with socle $\PSL_2(q)$ and with dihedral point stabilizer, then $G=\PSL_2(q)$ with $q\geq8$ even, and $\D$ is the Witt-Bose-Shrikhande space.
This corresponds to part (a). Hence we assume that $\lambda>1$ in the following.

Case 1. First suppose that $G_x=\DD_{2(q-1)}$ is a $\mathcal{C}_2$-subgroup.
Then $$v=\frac{|\PSL_2(q)|}{2(q-1)}=\frac{q(q+1)}{2}.$$
Consider the action of $G_x$ on $\mathcal{D}(x)$.
By the flag-transitivity of $G$ and Lemma \ref{LemmaFlagtrProperties} (c), $r=\frac{|G_x|}{|G_{xB}|}=\frac{2(q-1)}{|G_{xB}|}$ for some block $B$ through $x$.
Since $\lambda \geq 2$, by Lemma \ref{LemmaParameters} (d) we have
$$q(q+1)\leq\frac{1}{2}\lambda q(q+1)=\lambda v<r^2=\frac{4(q-1)^2}{|G_{xB}|^2}.$$
It is easy to see that $|G_{xB}|=1$ and $r=2(q-1)$.
Now we have $\lambda q(q+1)<8(q-1)^2$ and so $\lambda<8$.
From \cite[Table 2]{PGLSuborbits} we know that $\PSL_2(q)$ acts on the coset space of its maximal subgroup $\DD_{2(q-1)}$ with a subdegree $q-1$.
By Lemma \ref{LemmaDavies}, $r\mid\lambda(q-1)$.
So $2\mid\lambda$ and thus $\lambda\in\{2,4,6\}$.
If $\lambda=2$, then by the basic arithmetical properties in Lemma \ref{LemmaParameters}, we have $k=\frac{\lambda(v-1)}{r}+1$.
So
$(v,k,\lambda)=(\frac{q(q+1)}{2},\frac{q+4}{2},2)$.
By \cite[Table 2]{AlaviLambda2} and \cite[Theorem 1.1]{MontinaroLambda2}, we see that $\D$ is a $2$-$(36,6,2)$ design and $G=\PSL_2(8)$ with $G_x=\DD_{14}$, as in part (d).
If $\lambda=4$, then
$(v,k,\lambda)=(\frac{q(q+1)}{2},q+3,4)$ with
$$b=\frac{q(q+1)(q-1)}{q+3}=\frac{(q+3)(q^2-3q+8)-24}{q+3}=q^2-3q+8+\frac{-24}{q+3}.$$
We have a contradiction as $q+3\nmid 24$ as $q$ is even.
If $\lambda=6$, then similarly we get
$$b=\frac{2(q^3-q)}{3q+8}=\frac{2}{3}q^2-\frac{16}{9}q+\frac{110}{27}-\frac{880}{27(3q+8)}.$$
Multiplying both sides by $27$, we get that $3q+8\mid 880$.
We then obtain $q=4$ and $b=6$, but it follows that $b<v=10$, a contradiction.

Case 2. Now suppose that $G_x=\DD_{2(q+1)}$ is a $\mathcal{C}_3$-subgroup.
Then $$v=\frac{|\PSL_2(q)|}{2(q+1)}=\frac{q(q-1)}{2}.$$
If $q<8$, then $q=4$.
Then, $G=\PSL_2(4)\cong\PSL_2(5)\cong A_5$ and $G_x=\DD_{10}$. We have $(v,k,\lambda)=(6,3,2)$, and such a design exists by \cite[Table 1]{AlaviLambda2}, which corresponds to part (b).
Hence, $q\geq8$, and then $G$ acts on $\P$ as a $\frac{3}{2}$-transitive Frobenius regular group, where $G_x$ has $\frac{v-1}{q+1}+1=\frac{q}{2}$ point-orbits, and each non-trivial orbit has length $q+1$ (see \cite{CaminaMcDermott3/2}).
Hence each 2-element of $G_x$ has exactly $\frac{q}{2}$ fixed points on $\P$.
Before we proceed with the proof, we mention that all maximal subgroups of $G$ have types $\ZZ_2^f{:}\ZZ_{q-1}$, $\DD_{2(q-1)}$, $\DD_{2(q+1)}$, and $\PGL_2(q_0)$, where $q=q_0^r$, $q_0\ne 2$ and $r$ is prime (see \cite[Theorem 2.1]{MGiudici}).
It is known that $G$ acting on the right coset space $[G:(\ZZ_2^f{:}\ZZ_{q-1})]$ is a Zassenhaus group, namely, a 2-transitive group where each non-identity element has at most two fixed points.
Further, $\ZZ_2^f{:}\ZZ_{q-1}$ is a Frobenius group with $\ZZ_{q-1}$ acting transitively on the non-identity elements of $\ZZ_2^f$.
Since $\ZZ_2^f$ is a Sylow $2$-subgroup of $G$, all the 2-elements of $G$ are conjugate and of order 2, and thus are permutation isomorphic.
Therefore, each 2-element has exactly $\frac{q}{2}$ fixed points on $\P$.
Moreover, we notice that any two 2-elements from maximal subgroups of type $\ZZ_2^f{:}\ZZ_{q-1}$ (respectively, $\DD_{2(q-1)}$) cannot have a common fixed point on $\P$, otherwise these two elements generalize a subgroup $X$ fixing a point, and thus $\ZZ_2<X\leq(\ZZ_2^f{:}\ZZ_{q-1})\cap \DD_{2(q+1)}$ (respectively, $\ZZ_2<X\leq \DD_{2(q-1)}\cap \DD_{2(q+1)}$), a contradiction.
It follows that the fixed points of the 2-elements of maximal subgroups of type $\ZZ_2^f{:}\ZZ_{q-1}$ (respectively, $\DD_{2(q-1)}$) form a partition of $\P$.

Consider the action of $G_x=N{:}F$ on $\D(x)$.
We have
$$q(q-1) \leq\frac{1}{2}\lambda q(q-1)=\lambda v<r^2=\frac{4(q+1)^2}{|G_{xB}|^2}.$$
Since $q\geq 8$, we have $|G_{xB}|\in\{1,2\}$.

Subcase 1. Assume that $|G_{xB}|=2$.
In this case $r=q+1$.
If $k\leq q-1$, then
$$(q+1)(q-2)\leq2(\frac{1}{2}q(q-1)-1)=2(v-1)\leq\lambda(v-1)=r(k-1)\leq(q+1)(q-2).$$
It follows that $\lambda=2$ and $k=q-1$.
So $\D$ satisfies $(r,\lambda)=1$. This case is tackled in \cite{ZhanXQPSL}, and
only a $2$-$(28,7,2)$ design is obtained, where $G=\PSL_2(8)$ with $G_x=\DD_{18}$, as in part (c).
Now we consider the cases $k>q-1$.
Since $k\leq r=q+1$, we have $k\in\{q,q+1\}$.
If $k=q+1=r$, then $\D$ is symmetric, not satisfy the assumption.
Hence, $k=q$.
Then $$(q+1)(q-1)=r(k-1)=\lambda(v-1)=\lambda(\frac{1}{2}q(q-1)-1).$$
Clearly, $(\frac{1}{2}q(q-1)-1,q-1)=1$, so $\frac{1}{2}q(q-1)-1$ divides $q+1$, which contradicts $q\geq8$.

Subcase 2. Assume that $|G_{xB}|=1$.
In this case $r=2(q+1)$.
By $\lambda v<r^2$, we get
\begin{align}\label{inequality1}
\lambda<\frac{r^2}{v}=8\cdot\frac{q^2+2q+1}{q^2-q}=8\cdot(1+\frac{3q+1}{q^2-q}).
\end{align}
If $q=8$, then $G=\PSL_2(8)$ with $v=28$, $r=18$.
By Inequality (\ref{inequality1}), we get $\lambda\leq11$.
Combining this with Lemma \ref{LemmaParameters}, we find that $(v,k,\lambda)=(28,4,2)$ or $(28,7,4)$.
Using {\sc Magma} we see that $\PSL_2(8)$ cannot act as a flag-transitive automorphism group on a design with these parameters.
Therefore, $q>8$, so $q\geq16$.
By Inequality (\ref{inequality1}), we have $\lambda\leq9$.
Since $\lambda(v-1)=r(k-1)=2(q+1)(k-1)$ is even, $\lambda$ is also even.
So $\lambda\in\{2,4,6,8\}$.

If $\lambda=2$, then $k=\frac{q}{2}$, $b=2(q-1)(q+1)$, and $|G_B|=\frac{q}{2}$.
So $G_B$ lies in a Sylow 2-subgroup $Q\cong \ZZ_2^f$ of $G$.
Since the fixed points of the 2-elements of $Q$ form a partition of $\P$. For any point $x \in \P$, we have $Q_x\cong \ZZ_2$, and the orbit of $Q$ containing $x$ has length $|Q|/|Q_x|=\frac{q}{2}$.
Since each orbit of $Q$ on $\P$ has length $\frac{q}{2}$, it follows that $G_B=Q$, a contradiction.

If $\lambda=4$, then $k=q-1$, $b=q(q+1)$, and $|G_B|=q-1$.
Since all subgroups of order $q-1$ are conjugate to $\ZZ_{q-1}$ in $G$, it follows that $G_B$ lies in some maximal subgroup of type $\DD_{2(q-1)}$.
However, the fixed points of 2-elements of a subgroup of type $\DD_{2(q-1)}$ form a partition of $\P$, which yields that all point-orbits of subgroups of type $\DD_{2(q-1)}$ have length $q-1=k$. So $G_B=\DD_{2(q-1)}$, a contradiction.

If $\lambda=6$, then $k=|G_B|=\frac{3}{2}q-2$.
 Obviously, $G_B$ cannot be contained in a maximal subgroup of $G$ with structure $\DD_{2(q-1)}$ or $\DD_{2(q+1)}$ as any subgroups of these maximal subgroups cannot have an orbit of length $\frac{3}{2}q-2$.
 If $G_B\leq \ZZ_2^f{:} \ZZ_{q-1}$, then $\frac{3}{4}q-1$ divides $q-1$, which is impossible.
 Besides, $G_B$ is clearly not isomorphic to $\PGL_2(q_1)\cong \PSL_2(q_1)$ with $q=q_1^s$ for some $s$ and $q_1\ne2$.
Thus this case is ruled out.

Lastly, if $\lambda=8$, then $k=|G_B|=2q-3$.
It is also obvious that $G_B$ cannot be contained in any maximal subgroup of $G$.
\end{proof}

\begin{lemma}\label{Lemmaqodd}\label{LemmaDesignqOdd}
Assume that $G$ is almost simple with socle $\PSL_2(q)$ with $q$ odd.
Then $\D$ is a $2$-$(10,4,2)$ design with $G=\PGL_2(5)$ and $G_x=\DD_{12}$.
\end{lemma}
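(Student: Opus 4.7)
The plan is to mirror the strategy of Lemma \ref{Lemmaqeven}: enumerate the possibilities for $(G, G_x)$ from Theorem \ref{ProLCH} with $q$ odd, and then rule out all but one using a mixture of arithmetic constraints, subdegree information, and the intersection result in Proposition \ref{LemmaMeet>1}. By Theorem \ref{ProLCH} and Remark \ref{RemarkOfTable1}, with socle $\PSL_2(q)$ and $q$ odd, the candidates are: (i) $G=\PSL_2(q)$ with $G_x \cong D_{q-1}$ ($q \notin \{5,7,9,11\}$) or $G_x \cong D_{q+1}$ ($q \notin \{7,9\}$); (ii) $G=\PGL_2(q)$ with $G_x \cong D_{2(q-1)}$ ($q \neq 5$) or $G_x \cong D_{2(q+1)}$; (iii) the exceptional $\PGL_2(7) = \PSL_3(2).2$ with $G_x \in \{D_{12}, D_{16}\}$; and (iv) $\PSL_2(5) \cong A_5$ with $G_x \cong D_{10}$. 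Since $\PSL_2(5) \cong \PSL_2(4)$, case (iv) is already settled via Lemma \ref{Lemmaqeven}.

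For each remaining candidate, I would first compute $v = |G|/|G_x|$ (equal to $q(q\pm 1)/2$, depending on the type of $G_x$), and then combine $r \mid |G_x|$, $\lambda v < r^2$ (Lemma \ref{LemmaParameters}(d)), and $|G| < |G_x|^3$ (Lemma \ref{LemmaCubic}) to bound $r$, $\lambda$ and $|G_{xB}|$. The subdegrees of $\PSL_2(q)$ and $\PGL_2(q)$ on cosets of a maximal dihedral are known classically (\cite{PGLSuborbits}), and Lemma \ref{LemmaDavies} turns each subdegree into a divisibility constraint $r \mid \lambda \cdot |\Gamma \cap B|$. In the $\PSL_2(q)$ cases with $G_x$ Frobenius (namely $G_x \cong D_{q-1}$ or $D_{q+1}$), the $\tfrac{3}{2}$-transitivity of $G$ on $\P$ combined with the involution fixed-point bound in Lemma \ref{LemmaInvolutionFixSym} eliminates the generic family, exactly as in the even case of Lemma \ref{Lemmaqeven}. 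The ``linear space'' sub-case $\lambda=1$ can be discarded immediately by the $\PSL_2(q)$ classification in \cite{Buekenhout2vk1}, leaving only $\lambda \geq 2$ to treat.

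The crucial new input in the odd case is Proposition \ref{LemmaMeet>1}: in $\PGL_2(q)$ with $q$ odd, any maximal subgroup of type $D_{2(q-1)}$ meets any maximal subgroup of type $D_{2(q+1)}$ non-trivially. I plan to exploit this to control $G_B$. When $|G_B|$ forces $G_B$ into a maximal dihedral subgroup of opposite type to that of $G_x$, Proposition \ref{LemmaMeet>1} gives $G_{xB} = G_x \cap G_B > 1$, yielding a lower bound on $|G_{xB}|$ that clashes with the equation $r = |G_x|/|G_{xB}|$ for all configurations except $q=5$, $G = \PGL_2(5)$, $G_x \cong D_{12}$. In that surviving configuration one has $v=10$, $r=6$, $|G_{xB}|=2$, which is realized by the unique $2$-$(10,4,2)$ design, as can be verified directly or with {\sc Magma}.

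The main obstacle will be the bookkeeping for the $\PGL_2(q)$ cases with $G_x \cong D_{2(q\pm 1)}$, where several arithmetic quadruples $(v,k,\lambda,r)$ survive the initial inequalities as $q$ grows; each must be killed by checking that no subgroup of $\PGL_2(q)$ of the required order $k$ can serve as $G_B$ with a transitive action of length $k$, using the standard list of maximal subgroups (including the Borel $Z_p^f{:}Z_{q-1}$ and the subfield subgroups $\PGL_2(q_0)$). Finally, the residual small cases $q \in \{5,7,9,11\}$ and the exceptional $\PGL_2(7)$ pair are confirmed by direct {\sc Magma} enumeration, leaving the $2$-$(10,4,2)$ design as the unique possibility.
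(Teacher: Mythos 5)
Your overall strategy (enumerate the pairs $(G,G_x)$ from Theorem~\ref{ProLCH}, bound $\lambda$ and $|G_{xB}|$ arithmetically, use subdegrees with Lemma~\ref{LemmaDavies}, and invoke Proposition~\ref{LemmaMeet>1} for the stubborn case) is broadly the paper's strategy, but two steps of your plan have genuine gaps.

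First, you propose to treat $G=\PSL_2(q)$ with $G_x\cong D_{q\pm1}$ head-on, killing ``the generic family'' via $\tfrac{3}{2}$-transitivity and Lemma~\ref{LemmaInvolutionFixSym}. This does not go through as stated: for odd $q$ the group $D_{q\pm1}=D_{2\cdot(q\pm1)/2}$ is a Frobenius group only when $(q\pm1)/2$ is odd, so the Frobenius-regular/$\tfrac32$-transitive structure you want to exploit is simply absent for half of the values of $q$; moreover Lemma~\ref{LemmaInvolutionFixSym} is a statement about \emph{symmetric} designs, whereas the designs in this section are non-symmetric, so the involution fixed-point bound is not available. The paper sidesteps all of this with Lemma~\ref{LemmaLargegroup}: since $D_{q\pm1}=\PSL_2(q)\cap D_{2(q\pm1)}$ and the coset actions are compatible, any design for $\PSL_2(q)$ would also be a design for $\PGL_2(q)$, so only $G=\PGL_2(q)$ needs to be analysed. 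If you keep your route you must supply a separate argument for the $\PSL_2(q)$ cases with $(q\pm1)/2$ even.

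Second, your intended use of Proposition~\ref{LemmaMeet>1} misfires on the one configuration where it is actually needed. In the surviving hard case ($G=\PGL_2(q)$, $G_x=D_{2(q+1)}$, $|G_{xB}|=1$, $\lambda=4$, $k=q-1$) the block stabilizer has order $q-1$, so $G_B\cong Z_{q-1}$ is \emph{not} a maximal dihedral subgroup, and Proposition~\ref{LemmaMeet>1} does not directly give $G_x\cap G_B>1$; indeed $G_{xB}=1$ there. The correct deduction passes through $N_G(G_B)\cong D_{2(q-1)}$: the proposition forces every point stabilizer in $N_G(G_B)$ to be non-trivial, hence every $N_G(G_B)$-orbit on $\P$ has length at most $q-1$, so the $G_B$-orbit $B$ of length $q-1$ is already an $N_G(G_B)$-orbit, giving $N_G(G_B)=G_B$, a contradiction. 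Relatedly, the $2$-$(10,4,2)$ example does not emerge from the intersection argument at all; it arises in the branch $|G_{xB}|=2$, $\lambda=2$, $k=q-1$, which is identified from the classification of flag-transitive $2$-designs with $\lambda=2$. You should also not forget the $\lambda=8$ subcase ($k=2q-3$), which is eliminated by a divisibility computation on $b$.
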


\begin{proof}
By Theorem \ref{ProLCH}, we know that $G$ is either $\PSL_2(q)$ or $\PGL_2(q)$.
If $q\neq5$, then for any $T=\PSL_2(q)$ with dihedral maximal subgroup $M_0$, there is $L=\PGL_2(q)=\PSL_2(q).2$ with dihedral maximal subgroup $M$, such that $M_0=M\cap T$.
Hence the action of $T$ on the coset space $[L:M]$ is equivalent to the action of $T$ on $[T:M_0]$ (the action is faithful).
This naturally induces
$$1<T<L\leq \Sym([L:M]).$$
Now, by Lemma \ref{LemmaLargegroup}, the non-existence of designs with point set $\P=[L:M]$ for $L$ will imply the non-existence for $T$.
For $q=5$, $T=\PSL_2(5)\cong\PSL_2(4)$, the case for $G=\PSL_2(4)$ is dealt with in Lemma \ref{Lemmaqeven}.
Further, we need not consider the case $\lambda=1$ as stated in the proof of Lemma \ref{Lemmaqeven}.
Therefore, we shall only assume that $G=\PGL_2(q)$ with dihedral maximal subgroups and $\lambda\geq2$ in the following.

First we consider the $\mathcal{C}_2$-subgroup $G_x=\DD_{2(q-1)}$.
Then $$v=\frac{|\PGL_2(q)|}{2(q-1)}=\frac{q(q+1)}{2}.$$
Consider the action of $G_x$ on $\mathcal{D}(x)$.
The flag-transitivity yields that $r=\frac{|G_x|}{|G_{xB}|}=\frac{2(q-1)}{|G_{xB}|}$ with some $B$ through $x$.
By Lemma \ref{LemmaParameters} (d) we have
$$q(q+1)\leq \frac{1}{2}\lambda q(q+1)=\lambda v<r^2\leq\frac{4(q-1)^2}{|G_{xB}|^2}.$$
It is easy to see that $|G_{xB}|=1$ and $r=2(q-1)$.
Now we have $\lambda q(q+1)<8(q-1)^2$ and so $\lambda<8$.
From \cite[Table 2]{PGLSuborbits} we know that $\PGL_2(q)$ acts on the coset space of its maximal subgroup $\DD_{2(q-1)}$ with a subdegree $\frac{q-1}{2}$.
By Lemma \ref{LemmaDavies}, $r\mid\lambda\frac{q-1}{2}$.
So $4\mid\lambda$ and thus $\lambda=4$.
By the basic arithmetical properties in Lemma \ref{LemmaParameters} we have
$(v,k,\lambda)=(\frac{q(q+1)}{2},q+3,4)$ with $b=\frac{q(q+1)(q-1)}{q+3}$.
We get that the only admissible $q$ for $b$ to be a positive integer is $q\in\{5,9\}$.
If $q=5$, then $v=b=15$, $\D$ is symmetric, which contradicts our assumption.
If $q=9$, then $(v,k,\lambda)=(45,12,4)$ with $b=60$, $|G_B|=12$, whereas this is ruled out using {\sc Magma}.

Next, we consider the $\mathcal{C}_3$-subgroup $G_x=\DD_{2(q+1)}$.
Then $$v=\frac{|\PGL_2(q)|}{2(q+1)}=\frac{q(q-1)}{2}.$$
Consider the action of $G_x$ on $\mathcal{D}(x)$.
Similar to the above case, we have $r=\frac{|G_x|}{|G_{xB}|}=\frac{2(q+1)}{|G_{xB}|}$ with some $B$ through $x$, and
$$q(q-1) \leq \frac{1}{2}\lambda q(q-1)=\lambda v<r^2=\frac{4(q+1)^2}{|G_{xB}|^2}.$$
Since we only consider $q \geq 5$, we get $|G_{xB}|\in\{1,2\}$.
If $|G_{xB}|=2$, then we have $\lambda q(q-1)<2(q+1)^2$, which implies that either $\lambda=2$ or $(q,\lambda)\in\{(5,3),(7,3)\}$.
For $(q,\lambda)\in\{(5,3),(7,3)\}$, these do not satisfy the conditions in Lemma \ref{LemmaParameters}.
If $\lambda=2$, then we get $(v,k,\lambda)=(\frac{q(q-1)}{2},q-1,2)$.
Check \cite[Table 1]{AlaviLambda2} and \cite[Theorem 1.1]{MontinaroLambda2}, and we get that $\D$ is a $(10,4,2)$ design with $G=\PGL_2(5)$ and $G_x=\DD_{12}$, as stated.

If $|G_{xB}|=1$, then $r=2(q+1)$. We have
$\lambda q(q-1)<8(q+1)^2$.
In addition, by \cite[Table 2]{PGLSuborbits}, $\PGL_2(q)$ acts on the coset space of its maximal subgroup $\DD_{2(q+1)}$ with a subdegree $\frac{q+1}{2}$.
By Lemma \ref{LemmaDavies}, $r\mid\frac{\lambda(q+1)}{2}$.
This follows that $4\mid\lambda$.
Suppose first that $q\leq7$.
Then  $q\in\{5,7\}$ and it yields $\lambda\in\{4,8,12\}$.
If $(q,\lambda)=(5,4)$ or $(7,4)$, then we get the parameters $(v,k,\lambda)=(10,4,4)$ and $(21,6,4)$. These two cases are ruled out with the aid of  {\sc Magma}.
If $(q,\lambda)=(7,12)$, we get a $2$-$(21,16,12)$ symmetric design, which is out of our consideration. The remaining three cases do not satisfy the conditions in Lemma \ref{LemmaParameters}.
Hence, $q>7$, and we have $\lambda\in\{4,8\}$.

If $\lambda=4$, we get
$$(v,k,\lambda,r,b)=(\frac{q(q-1)}{2},q-1,4,2(q+1),q(q+1)).$$
The block stabilizer $G_B$ has order $\frac{|G|}{b}=q-1$.
By \cite[Theorem 3.5]{MGiudici}, $G_B$ is either a cyclic group $\ZZ_{q-1}$ (conjugate to the Levi complement of the maximal parabolic subgroup),
or conjugate to a subgroup of $S_4$ (when $q$ is a prime satisfying $q\equiv\pm3\pmod{8}$).
For the latter case, $q-1$ divides $24$, so $q\in\{5,13\}$.
The case of $q=5$ is dealt with in the last paragraph.
For $q=13$, we get $(v,k,\lambda)=(78,12,4)$, whereas this is ruled out using {\sc Magma}.
For the former case, $N_G(G_B)$ is the maximal subgroup $\DD_{2(q-1)}$ of type $\mathcal{C}_2$.
By Proposition \ref{LemmaMeet>1}, we know that
$$N_G(G_B)\cap (G_x)^g=N_G(G_B)\cap G_{x^g}>1$$
for each $g\in G$.
So the orbits of $N_G(G_B)$ on $\mathcal{P}$ have length at most $q-1$.
Since $G_B$ has an orbit $B$ of length $q-1$, it follows that $B$ is also an orbit of $N_G(G_B)$. But this implies that $N_G(G_B)=(N_G(G_B))_B=G_B$, a contradiction.

If $\lambda=8$, we get
$$(v,k,\lambda,r,b)=(\frac{q(q-1)}{2},2q-3,8,2(q+1),\frac{q^3-q}{2q-3}).$$
Then $$8b=4q^2+6q+5+\frac{15}{2q-3}.$$
Since $\frac{15}{2q-3}$ is an integer, we have $q\in\{2,3,4,9\}$.
As $q$ is odd, $q\in\{3,9\}$.
If $q=3$, then the socle of $G$ is not simple, not satisfying our assumption.
If $q=9$, then $G=\PGL_2(9)$, and $(v,k,\lambda)=(36,15,8)$, which is ruled out using {\sc Magma}.
\end{proof}

\noindent{\bf Proof of Theorem \ref{ThmMain}}\,\,
Assume that $\D$ is symmetric.
By Lemma \ref{LemmaLambda=1} and its dual statement, since $G_x$ is dihedral, we have $\lambda > 1$.
From Theorem \ref{ThmSym} we know that $G$ is point-imprimitive, a contradiction.
Hence $\D$ is non-symmetric.
Now, it follows from Lemma \ref{LemmaDihedralAffAlmost} that $G$ is either almost simple or affine.
If $G$ is almost simple, then the parts (a)--(e) hold according to Lemmas \ref{LemmaDesignNotSuzuki}--\ref{LemmaDesignqOdd}.
If $G$ is affine, we have the part (f).
\hfill$\square$

\medskip

\section{An application to flag-transitive symmetric designs with point stabilizers of order \texorpdfstring{$2p$}{}}\label{SecProof2}

In this section, we give a classification of flag-transitive symmetric designs with point stabilizers of order $|G_x|=|G_B|=2p$ where $p$ is a prime, as an application of Theorems \ref{ThmSym} and \ref{ThmMain}.

In Corollary \ref{PropoGB=p}, we have treated the case for $|G_x|=|G_B|$ a prime. In this case $G$ must be a Frobenius group.
For the case $|G_x|=|G_B|=2p$, it is shown that $\D$ is a unique $2$-$(16,6,2)$ design and $G$ is imprimitive on both $\P$ and $\B$.

\begin{corollary}\label{ThmGB2p}
Let $G$ be a flag-transitive automorphism group of a symmetric design  $\D$.
Let $x\in\P$ and $B\in\B$.
If $G_x$ has order $2p$ for some prime $p$, then $\D$ is a unique $2$-$(16,6,2)$ design, and $G$ is one of the five flag-regular automorphism groups described in Lemma {\rm\ref{LemmaRegueiroLambda=2}}.
\end{corollary}

\begin{proof}
If $\lambda=1$, by Lemma \ref{LemmaKantorProPlane},then we have either $|G|$ is odd or $|G_x|\ne 2p$. Hence $\lambda>1$.
If $p=2$, then $|G_B|=|G_x|=4$ and $k=4$ (by flag-transitivity). The only possible non-trivial symmetric designs are $2$-$(7,4,2)$ design and $2$-$(13,4,1)$ design.
From Lemmas \ref{LemmaKantorProPlane} and \ref{LemmaLambda=2}, it is known that $G$ is point-primitive and $\D$ is the complement of the point-hyperplane design of $\PG(2,2)$ or the point-hyperplane design of $\PG(2,3)$. While their flag-transitive automorphism groups are $\PSL_3(2)$ and $\PSL_3(3)$, respectively.
None of them have an abelian point stabilizer.

In the following we consider that $p>2$.
Since $|G_B|=|G_x|=2p$, it is clear that $G_B$ is either $\ZZ_{2p}$ or a Frobenius group $\DD_{2p}$.

Now we show that if $\lambda \ne 2$, then $G$ is point-primitive.
Assume that $G$ is point-imprimitive.
Since $k \mid |G_B|$ and $\D$ is non-trivial, we have $k=p$ or $2p$.
If $k=p$, then $(k,\lambda)=1$, and $G$ is point-primitive by \cite[2.3.7]{Dembowski}, a contradiction. Hence $k=2p$. Since $k > \lambda$ and $(k,\lambda) \ne 1$, we have either $\lambda=2x\,(1<x<p)$ or $\lambda=p$.
Then by the analysis in \cite[p.141]{RegueiroReduction}, there exist $n,c$ and $d$ such that
\begin{align}\label{Inequality2p}
\lambda(c-1)=k(d-1)=2p(d-1),
\end{align}
where $v=nc$, $d\mid k$ and $n,c,d,k/d>1$.
If $\lambda=2x$, then by the equation (\ref{Inequality2p}), we get $x(c-1)=p(d-1)$.
If $d=2$, then $x=1$ or $x=p$, a contradiction.
So $d=p$, and $p\mid c-1$ as $p\nmid x$.
Dividing $\lambda(v-1)=r(k-1)$ by equation (\ref{Inequality2p}) and we get
$$\frac{nc-1}{c-1}=\frac{2p-1}{p-1}.$$
It then follows that
$$\frac{c(n-1)}{c-1}=\frac{p}{p-1}.$$
Since $p\mid c-1$, we obtain that $p^2\mid n-1$.
Now $$\frac{p}{p-1}=\frac{c(n-1)}{c-1}\geq\frac{p^2c}{c-1},$$
which is impossible.
So $\lambda=p$.
Note that $G_x$ contains an involution $g$ fixing at least two points $x$ and $y$ (Lemma \ref{LemmaInvolutionFixSym}). Since $k=2p$, we have that $G_B$ is faithful and regular on $B$, and $G_x$ is also faithful and regular on blocks through $x$.
Hence $\langle g \rangle$ acts semi-regularly on the $\lambda$ blocks through two fixed points.
Since $\lambda=p$ is odd, $g$ fixes at least one block $B'$ through $x$ and $y$. Then $1\ne g \in G_{B'} \cap G_x$ and $G_{B'}$ is not regular on $B'$, a contradiction.
Hence, $G$ is point-primitive for $\lambda\ne2$.
Now we analyze the two possibilities.

Case 1.
If $G_B\cong \ZZ_{2p}$, then $G_B^B$ is also abelian and is regular since $G_B^B$ is transitive by Lemma \ref{LemmaFlagtrProperties}.
From Theorem \ref{ThmSym} we know that $G_B$ is faithful on $B$.
So $G_B\cong G_B^B=\ZZ_{2p}$ and $|G_B|=|G_B^B|=k=2p$, implying that $G$ is flag-regular.
If $\lambda=2$, then the result holds by Lemmas \ref{LemmaLambda=2} and \ref{LemmaRegueiroLambda=2}.
If $\lambda\neq2$, then $G$ is point-primitive.
By Theorem \ref{ThmSym}, $G$ is a Frobenius group of odd order, a contradiction.



Case 2. Next, suppose that $G_B$ is a dihedral group $\DD_{2p}$.
By Theorem \ref{ThmSym}, $G_x\cong G_B=D_{2p}$ and $G$ is imprimitive on both $\P$ and $\B$.
It follows that $\lambda=2$, and then by Lemma \ref{LemmaLambda=2}, we get $(v,k,\lambda)=(16,6,2)$.
Hence, $k=6=2p$ and $G$ is flag-regular, and thus the conclusion is drawn by Lemma \ref{LemmaRegueiroLambda=2}.
\end{proof}

\subsection*{Acknowledgements}

The project is supported by the National Natural Science Foundation of China (No.12526634).


\begin{thebibliography}{99}



\bibitem{AlaviLambda2} S. H. Alavi, Almost simple groups as flag-transitive automorphism groups of 2-designs with $\lambda=2$, 206 (2024), 105892.

\bibitem{AlaviExceptional} S. H. Alavi, M. Bayat, and A. Daneshkhah, Finite exceptional groups of Lie type and symmetric designs, Discrete Math., 345 (2022), 112894.


\bibitem{AlaviPSL4} S. H. Alavi, M. Bayat, and A. Daneshkhah, Symmetric designs and projective special linear groups of dimension at most four, J. Combin. Des., 28 (2020), 688--709.

\bibitem{AlaviBurnessLarge} S. H. Alavi and T. C. Burness, Large subgroups of simple groups, J. Algebra, 421 (2015), 187--233.

\bibitem{Aschbacher} M. Aschbacher, On the maximal subgroups of the finite classical groups, Invent. Math., 76 (1984), 469--514.


\bibitem{BethDesign} T. Beth, D. Jungnickel, and H. Lenz, Design Theory, Volume 1, 2nd edition, Cambridge University Press, Cambridge, 1999.


\bibitem{Biggs} N. L. Biggs and A. T. White, Permutation groups and combinatorial structures, Cambridge University Press, Cambridge, 1979.


\bibitem{Symrm=1Affine} M. Biliotti and A. Montinaro, On flag-transitive symmetric designs of affine type, J. Combin. Des., 25 (2017), 85--97.

\bibitem{magma} W. Bosma, J. Cannon, and C. Playoust, The magma algebra system I : the user language. J. Symb. Comput., 24 (1997), 235--265.

\bibitem{Buekenhout2vk1} F. Buekenhout, A. Delandtsheer, J. Doyen, P. B. Kleidman, M. W. Liebeck, and J. Saxl, Linear spaces with flag-transitive automorphism groups, Geom. Dedicata, 36 (1990), 89--94.


\bibitem{Cameron} P. J. Cameron and C. E. Praeger, Block-transitive $t$-designs I: point-imprimitive designs, Discrete Math., 118 (1993), 33--43.

\bibitem{CaminaMcDermott3/2} A. R. Camina and J. P. J. McDermott, On $\frac{3}{2}$-transitive Frobenius regular groups, J. Lond. Math. Soc., 20 (1979), 205--214.

\bibitem{LocallyPrimitive} J. Chen, P. Hua, C. H. Li, and Y. Wu, Locally primitive block designs, J. London Math. Soc., 112 (2025), e70221.

\bibitem{Locally2homo} J. Chen, P. Hua, C. H. Li, and Y. Wu, Locally 2-homogeneous block designs, arXiv.2603.22960.

\bibitem{Handbook} C. J. Colbourn and J. H. Dinitz, The CRC Handbook of Combinatorial Designs, CRC Press, Boca Raton, FL (2007).

\bibitem{atlas} J. H. Conway, R. T. Curtis, S. P. Norton, R. A. Parker, and R. A. Wilson, Atlas of finite groups: maximal subgroups and ordinary characters for simple groups, Clarendon Press, Oxford, 1985.

\bibitem{Lander} E. S. Lander, Symmetric Designs: An Algebraic Approach, Cambridge University Press, Cambridge, 1983.

\bibitem{DaviesImpri} D. H. Davies, Flag-transitivity and primitivity, Discrete Math., 63 (1987), 91--93.

\bibitem{Dembowski} P. Dembowski, Finite Geometries, Springer-Verlag, New York, 1968.

\bibitem{DempwolffSymaffine3} U. Dempwolff, Affine rank 3 groups on symmetric designs, Des. Codes Cryptogr., 31 (2004), 159--168.

\bibitem{DempwolffSymAlmost3} U. Dempwolff, Primitive rank 3 groups on symmetric designs, Des. Codes Cryptogr., 22 (2001), 191--207.

\bibitem{Dixon}  J. D. Dixon and B. Mortimer, Permutation Groups, Springer-Verlag, New York, 1996.

\bibitem{PGLSuborbits} I. A. Faradzev and A. A. Ivanov, Distance-transitive representations of groups $G$ with $\PSL_2(q)\unlhd G\leq\PGammaL_2(q)$, Europ. J. Combinatorics, 11 (1990), 347--356.

\bibitem{MGiudici} M. Giudici, Maximal subgroups of almost simple groups with socle $\PSL(2,q)$, arXiv: math/0703685.

\bibitem{Kantor} W.M. Kantor, Automorphism groups of designs, Math. Z., 109 (1969), 246--252.

\bibitem{KantorProplane} W. M. Kantor, Primitive permutation groups of odd degree, and an application to finite projective planes, J. Algebra, 106 (1987), 15--45.

\bibitem{Kantor2tr} W. M. Kantor, Classification of 2-transitive symmetric designs, Graphs Combin., 1 (1985), 165--166.


\bibitem{LicaihengD2n} C. H. Li, Vertex transitive embeddings of complete graphs, J. Comb. Theory Ser. B, 99 (2009), 447--454.

\bibitem{ZhanghuaLicaiheng} C. H. Li and H. Zhang, The finite primitive groups with soluble stabilizers, and the edge-primitive s-arc transitive graphs, Proc. London Math. Soc., 103 (2011), 441--472.


\bibitem{LiebeckCyclicStabilizer} M. W. Liebeck and J. Saxl, On point stabilizers in primitive permutation groups, Comm. Algebra, 19 (1991), 2777--2786.

\bibitem{ONan1988} M. W. Liebeck, C. E. Praeger, and J. Saxl, On the O'Nan-Scott theorem for finite primitive permutation groups, J. Aust. Math. Soc. (Series A), 44 (1988), 389--396.

\bibitem{MannDiffenceSet} H. B. Mann, Difference sets in elementary abelian groups, Illinois J. Math., 9 (1965), 212--219.



\bibitem{MontinaroLambda2} A. Montinaro, Y. Zhao, Z. Zhang, and S. Zhou, Designs with a simple automorphism group, Finite Fields Appl., 99 (2024), 102488.


\bibitem{PraegerZhou} C. E. Praeger and S. Zhou, Imprimitive flag-transitive symmetric designs, J. Comb. Theory Ser. A, 113 (2006), 1381--1395.

\bibitem{QuirinSmallOrbitals} W. L. Quirin, Primitive permutation groups with small orbitals, Math. Z., 122 (1971), 267--274.

\bibitem{RegueiroReduction} E. O'Reilly Regueiro, On primitivity and reduction for flag-transitive symmetric designs, J. Comb. Theory Ser. A, 109 (2005), 135--148.

\bibitem{RegueiroSporadic} E. O'Reilly Regueiro, Biplanes with flag-transitive automorphism groups of almost simple type, with alternating or sporadic socle, Eur. J. Combin., 26 (2005), 577--584.

\bibitem{RegueiroClassical} E. O'Reilly Regueiro, Biplanes with flag-transitive automorphism groups of almost simple type, with classical socle, J. Algebraic Combin., 26 (2007), 529--552.

\bibitem{RegueiroExceptionalLie} E. O'Reilly Regueiro, Biplanes with flag-transitive automorphism groups of almost simple type, with exceptional socle of Lie type, J. Algebraic Combin., 27 (2008), 479--491.

\bibitem{RegueiroSurvey} E. O'Reilly Regueiro, Classification of flag-transitive symmetric designs, Electron. Notes Discrete Math., 28 (2007), 535--542.

\bibitem{WangJiesuborbitun2p} J. Wang, The primitive permutation groups with a dihedral subconstituent of order 2p, Acta Math. Sin., 36 (1993), 721--726 (in Chinese).

\bibitem{Wielandt} H. Wielandt, Finite Permutation Groups, Academic Press, New York, 1964.


\bibitem{ZhanXQPSL}  X. Zhan and S. Zhou, Non-symmetric 2-designs admitting a two-dimensional projective linear group, Des. Codes Cryptogr., 86 (2018), 2765--2773.

\end{thebibliography}
\end{document}